\definecolor{darkgreen}{rgb}{0,0.45,0}
 \DeclareMathOperator{\ob}{ob}
\DeclareMathOperator{\im}{Im}
\newcommand{\cat}[1]{\mathbf{#1}}
\newcommand{\op}{\mathrm{op}}
\newcommand{\id}{\mathrm{id}}
\newcommand{\thg}{{\mathord{\text{--}}}}
\newcommand{\ti}{\!:\!}
\renewcommand{\c}{,\,}
\newcommand{\ty}{\ \mathsf{type}}
\newcommand{\Tm}[2]{\mathrm{Tm}_{#1}(#2)}
\newcommand{\abs}[1]{{\left|{#1}\right|}}
\newcommand{\ab}[1]{{\left[{#1}\right]}}
\newcommand{\res}[2]{\left.{#1}\right|_{#2}}
\newcommand{\defeq}{\mathrel{\mathop:}=}
\newcommand{\cd}[2][]{\vcenter{\hbox{\xymatrix#1{#2}}}}
\renewcommand{\phi}{\varphi}
\newcommand{\C}{{\mathcal C}}
\newcommand{\D}{{\mathcal D}}
\newcommand{\E}{{\mathcal E}}
\renewcommand{\H}{{\mathcal H}}
\newcommand{\J}{{\mathcal J}}
\newcommand{\V}{{\mathcal V}}
\newcommand{\xtor}[1]{\cdl[@1]{{} \ar[r]|-{\object@{|}}^{#1} & {}}}
\def\hookleftarrowfill@{\arrowfill@\leftarrow\relbar{\relbar\joinrel\rhook}}
\def\twoheadleftarrowfill@{\arrowfill@\twoheadleftarrow\relbar\relbar}
\def\leftbararrowfill@{\arrowdoublefill@{\leftarrow\mkern-5mu}\relbar\mapstochar\relbar\relbar}
\def\Leftbararrowfill@{\arrowdoublefill@{\Leftarrow\mkern-2mu}\Relbar\Mapstochar\Relbar\Relbar}
\def\leftringarrowfill@{\arrowdoublefill@{\leftarrow\mkern-3mu}\relbar{\mkern-3mu\circ\mkern-2mu}\relbar\relbar}
\def\lefttriarrowfill@{\arrowfill@{\mathrel\triangleleft\mkern0.5mu\joinrel\relbar}\relbar\relbar}
\def\Lefttriarrowfill@{\arrowfill@{\mathrel\triangleleft\mkern1mu\joinrel\Relbar}\Relbar\Relbar}
\def\hookrightarrowfill@{\arrowfill@{\lhook\joinrel\relbar}\relbar\rightarrow}
\def\twoheadrightarrowfill@{\arrowfill@\relbar\relbar\twoheadrightarrow}
\def\rightbararrowfill@{\arrowdoublefill@{\relbar\mkern-0.5mu}\relbar\mapstochar\relbar\rightarrow}
\def\Rightbararrowfill@{\arrowdoublefill@{\Relbar\mkern-2mu}\Relbar\Mapstochar\Relbar\Rightarrow}
\def\rightringarrowfill@{\arrowdoublefill@\relbar\relbar{\mkern-2mu\circ\mkern-3mu}\relbar{\mkern-3mu\rightarrow}}
\def\righttriarrowfill@{\arrowfill@\relbar\relbar{\relbar\joinrel\mkern0.5mu\mathrel\triangleright}}
\def\Righttriarrowfill@{\arrowfill@\Relbar\Relbar{\Relbar\joinrel\mkern1mu\mathrel\triangleright}}
\def\leftrightarrowfill@{\arrowfill@\leftarrow\relbar\rightarrow}
\def\mapstofill@{\arrowfill@{\mapstochar\relbar}\relbar\rightarrow}
\renewcommand*\xleftarrow[2][]{\ext@arrow 20{20}0\leftarrowfill@{#1}{#2}}
\providecommand*\xLeftarrow[2][]{\ext@arrow 60{22}0{\Leftarrowfill@}{#1}{#2}}
\providecommand*\xhookleftarrow[2][]{\ext@arrow 10{20}0\hookleftarrowfill@{#1}{#2}}
\providecommand*\xtwoheadleftarrow[2][]{\ext@arrow 60{20}0\twoheadleftarrowfill@{#1}{#2}}
\providecommand*\xleftbararrow[2][]{\ext@arrow 10{22}0\leftbararrowfill@{#1}{#2}}
\providecommand*\xLeftbararrow[2][]{\ext@arrow 50{24}0\Leftbararrowfill@{#1}{#2}}
\providecommand*\xleftringarrow[2][]{\ext@arrow 10{26}0\leftringarrowfill@{#1}{#2}}
\providecommand*\xlefttriarrow[2][]{\ext@arrow 80{24}0\lefttriarrowfill@{#1}{#2}}
\providecommand*\xLefttriarrow[2][]{\ext@arrow 80{24}0\Lefttriarrowfill@{#1}{#2}}
\renewcommand*\xrightarrow[2][]{\ext@arrow 01{20}0\rightarrowfill@{#1}{#2}}
\providecommand*\xRightarrow[2][]{\ext@arrow 04{22}0{\Rightarrowfill@}{#1}{#2}}
\providecommand*\xhookrightarrow[2][]{\ext@arrow 00{20}0\hookrightarrowfill@{#1}{#2}}
\providecommand*\xtwoheadrightarrow[2][]{\ext@arrow 03{20}0\twoheadrightarrowfill@{#1}{#2}}
\providecommand*\xrightbararrow[2][]{\ext@arrow 01{22}0\rightbararrowfill@{#1}{#2}}
\providecommand*\xRightbararrow[2][]{\ext@arrow 04{24}0\Rightbararrowfill@{#1}{#2}}
\providecommand*\xrightringarrow[2][]{\ext@arrow 01{26}0\rightringarrowfill@{#1}{#2}}
\providecommand*\xrighttriarrow[2][]{\ext@arrow 07{24}0\righttriarrowfill@{#1}{#2}}
\providecommand*\xRighttriarrow[2][]{\ext@arrow 07{24}0\Righttriarrowfill@{#1}{#2}}
\providecommand*\xmapsto[2][]{\ext@arrow 01{20}0\mapstofill@{#1}{#2}}
\providecommand*\xleftrightarrow[2][]{\ext@arrow 10{22}0\leftrightarrowfill@{#1}{#2}}
\providecommand*\xLeftrightarrow[2][]{\ext@arrow 10{27}0{\Leftrightarrowfill@}{#1}{#2}}
\newcommand{\twocong}[2][0.5]{\ar@{}[#2] \save ?(#1)*{\cong}\restore}
\newcommand{\twoeq}[2][0.5]{\ar@{}[#2] \save ?(#1)*{=}\restore}
\newcommand{\rtwocell}[3][0.5]{\ar@{}[#2] \ar@{=>}?(#1)+/l 0.2cm/;?(#1)+/r 0.2cm/^{#3}}
\newcommand{\ltwocell}[3][0.5]{\ar@{}[#2] \ar@{=>}?(#1)+/r 0.2cm/;?(#1)+/l 0.2cm/^{#3}}
\newcommand{\ltwocello}[3][0.5]{\ar@{}[#2] \ar@{=>}?(#1)+/r 0.2cm/;?(#1)+/l 0.2cm/_{#3}}
\newcommand{\dtwocell}[3][0.5]{\ar@{}[#2] \ar@{=>}?(#1)+/u  0.2cm/;?(#1)+/d 0.2cm/^{#3}}
\newcommand{\dltwocell}[3][0.5]{\ar@{}[#2] \ar@{=>}?(#1)+/ur  0.2cm/;?(#1)+/dl 0.2cm/^{#3}}
\newcommand{\drtwocell}[3][0.5]{\ar@{}[#2] \ar@{=>}?(#1)+/ul  0.2cm/;?(#1)+/dr 0.2cm/^{#3}}
\newcommand{\dthreecell}[3][0.5]{\ar@{}[#2] \ar@3{->}?(#1)+/u  0.2cm/;?(#1)+/d 0.2cm/^{#3}}
\newcommand{\utwocell}[3][0.5]{\ar@{}[#2] \ar@{=>}?(#1)+/d 0.2cm/;?(#1)+/u 0.2cm/_{#3}}
\newcommand{\dtwocelltarg}[3][0.5]{\ar@{}#2 \ar@{=>}?(#1)+/u  0.2cm/;?(#1)+/d 0.2cm/^{#3}}
\newcommand{\utwocelltarg}[3][0.5]{\ar@{}#2 \ar@{=>}?(#1)+/d  0.2cm/;?(#1)+/u 0.2cm/_{#3}}
\newcommand{\pushoutcorner}[1][dr]{\save*!/#1-1.2pc/#1:(-1,1)@^{|-}\restore}
\theoremstyle{definition}
\theoremstyle{plain}
\newtheorem{Thm}{Theorem}
\newtheorem{Prop}[Thm]{Proposition}
\newtheorem{Cor}[Thm]{Corollary}
\numberwithin{equation}{section}
\theoremstyle{definition}
\newtheorem{Defn}[Thm]{Definition}
\newtheorem{Ex}[Thm]{Example}
\newtheorem{Rk}[Thm]{Remark}
\DeclareMathOperator{\Inc}{Inc}
\begin{document}
\leftmargini=2em \title{Combinatorial structure of type dependency}
\subjclass[2000]{Primary: 18D05, 18C15}
\author{Richard Garner} \address{Department of Mathematics, Macquarie
  University, NSW 2109, Australia} \email{richard.garner@mq.edu.au}
\date{\today}

\thanks{This work was supported by the Australian Research Council's
  \emph{Discovery Projects} scheme, grant number DP110102360.}

\begin{abstract}
  We give an account of the basic combinatorial structure underlying
  the notion of type dependency. We do so by considering the category
  of all dependent sequent calculi, and exhibiting it as the category
  of algebras for a monad on a presheaf category. The objects of the
  presheaf category encode the basic judgements of a dependent sequent
  calculus, while the action of the monad encodes the deduction rules;
  so by giving an explicit description of the monad, we obtain an
  explicit account of the combinatorics of type dependency. We find
  that this combinatorics is controlled by a particular kind of
  decorated ordered tree, familiar from computer science and from
  innocent game semantics.  Furthermore, we find that the monad at
  issue is of a particularly well-behaved kind: it is local right
  adjoint in the sense of Street--Weber. In future work, we will use
  this fact to describe nerves for dependent type theories, and to
  study the coherence problem for dependent type theory using the
  tools of two-dimensional monad theory.
\end{abstract}

\maketitle
\section{Introduction}
There has been much recent interest in Martin-L\"of's type theory,
spurred on by the discovery of remarkable links to algebraic topology
and the theory of $(\infty,1)$-categories.  \emph{Homotopy type
  theory}~\cite{HoTT2013} extends Martin-L\"of type theory with
Voevoedsky's \emph{univalence axiom} and a new collection of
type-formers, the \emph{higher inductive types}; the resultant system
is capable of deriving key results of homotopy theory---such as
calculations of homotopy groups of spheres---in a synthetic, axiomatic
manner. Models of the axioms include not only classical homotopy
theory, but also ``non-standard homotopy theories'' described by
$(\infty, 1)$-toposes (homotopical analogues of categories of
sheaves); in fact, it is believed that we can view homotopy type
theory as providing an internal language for $(\infty, 1)$-toposes,
just as first-order geometric logic does for Grothendieck toposes.

The suitability of Martin-L\"of type theory as a language for abstract
homotopy theory is due to the presence of \emph{identity types} which
can be validly interpreted by the homotopy relation. The existence of
identity types relies in turn on the possibility of \emph{type
  dependency}: type families indexed by elements of other types. While
the intuitive meaning of type dependency is clear, its syntactic
expression is rather involved; a desire to understand its mathematical
essence has led various
authors~\cite{Cartmell1986Generalised,Ehrhard1988Une-semantique,Hyland1989The-theory,Jacobs1993Comprehension,Hofmann1995Extensional,Dybjer1996Internal,Pitts2000Categorical}
to describe notions of \emph{categorical model} for dependent type
theory which abstract away from the complexities of the syntax.

One aspect that remains implicit in both the syntactic and the
categorical accounts is the combinatorial structure of type
dependency: the structure imposed on the judgements of a dependent
sequent calculus by the basic rules of weakening, projection and
substitution.  On the syntactic side, this combinatorics is hidden in
the recursive clauses which generate the calculus; while on the
categorical side, the essential role it plays in constructing models
from the syntax is no longer visible in the finished product. In
short, the syntactic approach fails to detect this structure by being
insufficiently abstract, while the categorical approach fails to see
it by being too abstract.

The objective of this paper is to elucidate the combinatorics of type
dependency by adopting a viewpoint which is intermediate between the
concrete syntactic one and the fully abstract categorical one. We will
model dependent sequent calculi as algebras for a monad on a presheaf
category (we assume the reader is familiar with the basic concepts of
category theory as set out in~\cite{Mac-Lane1971Categories}). Objects
of the presheaf category will encode the basic judgement-forms of a
sequent calculus; the algebraic structure imposed on them by the monad
will encode the deduction rules. Now the combinatorial structure we
wish to describe inheres in the action of the monad, and so by giving
an explicit description of this action, we obtain an explicit account
of the structure. More precisely, the underlying endofunctor of the
monad describes the derivable judgements of a freely-generated sequent
calculus; while the monad multiplication encodes the process of
proof-tree normalisation by which such derivations are combined.

For the dependent sequent calculi to be studied in this paper, we will
not consider rules for type-formers such as $\Pi$-types,
$\Sigma$-types and identity types, but rather concentrate on the core
structural rules of weakening, projection and substitution.
% Thus, the kinds of dependent type theory that we model will be the
% \emph{generalised algebraic theories}
% of~\cite{Cartmell1986Generalised}: dependent calculi satisfying the
% basic structural rules of weakening, projection and substitution,
% without the necessity of any further type-formers\footnote{There is
% in fact some subtlety concerning the monadicity of more complex
% notions of type theories over the presheaf category of basic
% judgements; it is not automatic. The issue is rather like that of
% the monadicity of categories with some class of limits over directed
% graphs; see~\cite{Kelly1997On-the-monadicity} and the references
% therein.}.
The combinatorics arising from just these rules is particularly
elegant; we will see that in a freely-generated theory of this kind,
the shape of derivable judgements is controlled by suitably decorated
\emph{heaps}. A heap is a finite tree with a total order on its nodes
refining the tree order.  This structure is common throughout logic
and computer science, and the manner in which it appears here is
highly reminiscent of its role in the study of (logical) game
semantics and innocent strategies~\cite{Harmer2007Categorical}. We
hope to explore this link further in future work.

Beyond elucidating a structure which we believe to be interesting in
its own right, the approach taken in this paper will also enable the
analysis of dependent type theory using the tools of
\emph{combinatorial category theory}. This is a particular strand of
category theory, growing out of Joyal's
work~\cite{Joyal1986Foncteurs}, which has found recent
applications~\cite{Leinster2004Operads,Weber2007Familial,Kock2010Polynomial,Weber2013Multitensors}
in taming some of the complexities of higher-dimensional category
theory. A central theme in combinatorial category theory is the study
of monads possessing abstract categorical properties that allow them
to be seen as fundamentally combinatorial in nature. It turns out that
the monad for dependent type theories is of this kind. More precisely,
it is \emph{local right adjoint} or \emph{familially representable} in
the sense of~\cite{Carboni1995Connected,Street2000The-petit,Leinster2004Higher}: and this
permits the application of a rich body of
theory~\cite{Weber2004Generic,Leinster2004Operads,Weber2007Familial,Berger2012Monads}
concerning such monads to the study of dependent type theories.  It is
beyond the scope of this paper to investigate these connections in
detail but let us mention two applications we intend to pursue in
future work; see Section~\ref{sec:final} below for a more detailed
sketch of these applications.

Firstly, we will apply the results of~\cite{Weber2007Familial} to
describe a \emph{nerve functor} for dependent sequent calculi: thus, a
fully faithful embedding of the category of dependent sequent calculi
into a presheaf category, together with a characterisation of the
objects in the image. We hope to use this nerve functor to reveal an
implicit geometry of dependent sequent calculi.  Secondly, we will
provide a new take on the \emph{coherence
  problem}~\cite{Hofmann1995On-the-interpretation} for dependent type
theory. We will do so by lifting the monad for dependent type theories
to a $2$-monad on a presheaf $2$-category, and studying its
pseudoalgebras, which are abstract presentations of Curien's ``syntax
with substitution up to isomorphism''~\cite{Curien1993Substitution}.

We now give an overview of the contents of the paper. We begin in
Section~\ref{sec:syntax} with a description of the syntax of the
dependent sequent calculi of interest to us, which are the
\emph{generalised algebraic theories}
of~\cite{Cartmell1986Generalised}; we also describe the
interpretations between two such theories, so yielding the objects and
morphisms of the category $\cat{GAT}$ of generalised algebraic
theories.

In Section~\ref{sec:termtype}, we begin our combinatorial analysis of
the category $\cat{GAT}$ by describing a presheaf category $[\H^\op,
\cat{Set}]$ whose objects encode the basic judgements of a generalised
algebraic theory.  We define a forgetful functor $\cat{GAT} \to
[\H^\op, \cat{Set}]$, and show that this has a left adjoint and is
monadic. The main goal of the paper will be to give an explicit
description of the induced monad.

In fact it will be convenient---and illuminating---to split this task
up. Writing $w$, $p$, and $s$ for the weakening, projection and
substitution rules, we consider for each $D \subset \{w,p,s\}$ the
category $D\text-\cat{GAT}$ of ``generalised algebraic theories with
structural rules from $D$''. Again, each forgetful functor
$D\text-\cat{GAT} \to [\H^\op, \cat{Set}]$ has a left adjoint and is
monadic, and by studying the induced monads for various choices of
$D$, we may understand the structure induced by various combinations
of the three rules.  

In Section~\ref{sec:weaken}, we consider the structure imposed on
$[\H^\op, \cat{Set}]$ by the rule of weakening alone, without
projection or substitution; thus, we characterise the monad $W$
induced by the forgetful functor $\{w\}\text-\cat{GAT} \to [\H^\op,
\cat{Set}]$.
% This monad $W$ on
% $[\H^\op, \cat{Set}]$ is, like $W$, parametric right adjoint, and the
% object $W1$ controlling its action is comprised precisely of
% (undecorated) min-heaps. 
Then in Section~\ref{sec:project}, we consider the structure imposed
by the projection rule. As this rule in fact relies on the weakening
rule for its well-formedness, we are forced to consider both together:
we thus describe the monad $P$ induced by the functor
$\{w,p\}\text-\cat{GAT} \to [\H^\op, \cat{Set}]$.
% This monad is again parametric right adjoint, and its value at the
% terminal presheaf is given by (undecorated) min-heaps, but now with a
% ``differential'' term arising from the projection rules.
In Section~\ref{sec:subst}, we go on to consider the structure imposed by
substitution alone, which involves
describing the monad $S$ induced by the forgetful functor
$\{s\}\text-\cat{GAT} \to [\H^\op, \cat{Set}]$; and then in
% This monad is once
% again parametric right adjoint, and its value $S1$ at the terminal
% presheaf is given simply by lists of natural numbers.
Section~\ref{sec:combining}, we combine together the monads for weakening
and projection and for substitution into a compound monad $T = PS$ for
generalised algebraic theories. The extra datum required to do so is a
\emph{distributive law} in the sense of~\cite{Beck1969Distributive}
between the two monads $P$ and $S$; this distributive law describes
the process by which instances of weakening or projection may be
commuted past instances of substitution in a derivation~tree.

We conclude in Section~\ref{sec:final} by showing that each of the
monads constructed in the preceding sections has the good property of
being local right adjoint. We show that the monads $W$, $P$ and $S$
for weakening, weakening and projection, and substitution, have the
additional property of being \emph{cartesian}, meaning that the
naturality squares of their unit and multiplication are all
pullbacks. We also discuss in more detail the further
applications outlined above.

\section{Syntax of type theory}\label{sec:syntax}

\subsection{Generalised algebraic theories}
\label{sec:gener-algebr-theor}
In this section, we give a careful exposition of the syntax of
dependent type theory. As explained in the introduction, our concern
is not with the type constructors of Martin--L\"of type
theory---identity types, $\Pi$-types, $\Sigma$-types, and so on---but
rather with the basic structure of type dependency itself. It is thus
a natural choice to work in the setting of Cartmell's
\emph{generalised algebraic theories}~\cite{Cartmell1986Generalised};
these are dependent sequent calculi without type constructors, but
with the possibility of adding arbitrary (possibly dependent) type and
term constants.  To give such a theory is to give its type and term
constants together with a list of axioms specifying the formation
rules for the constants as well as any equational constraints they
should satisfy.
%Before providing the precise definition, it may be useful to give an
%example which illustrates all the main features.

\begin{Ex}
The \emph{generalised algebraic theory of
  categories} is given over the language with two type-constants $O$
and $A$, two term-constants $\mathsf{c}$ and $\mathsf{i}$, and the
following axioms:
\begin{itemize}
\item $\vdash O\ty$;
\item $x \ti O,\, y \ti O \,\vdash\, A(x,y)\
\mathsf{type}$;
\item $x \ti O \,\vdash\, \mathsf{i}(x) \ti A(x,x) 
$;
\item $x \ti O,\, y \ti O,\, z \ti O,\, g
\ti A(y,z),\, f \ti A(x,y) \,\vdash\, \mathsf{c}(x,y,z,g,f) : A(x,z)$;
\item $x \ti O,\, y \ti O,\, f \ti A(x,y) \vdash \mathsf{c}(x,y,y,\mathsf{i}(y), f)
= f : A(x,y)$;
\item $x \ti O,\, y \ti O,\, f \ti A(x,y) \vdash \mathsf{c}(x,x,y,f,\mathsf{i}(x))
= f : A(x,y)$;
\item $x \ti O,\, y \ti O,\, z \ti O,\, w \ti O,\, h \ti A(z,w),\, g
  \ti A(y,z),\, f \ti A(x,y)$\\ \phantom{a} \hfill $
\vdash\, \mathsf{c}(x,y,w,\mathsf{c}(y,z,w,h,g), f) =
\mathsf{c}(x,z,w,h,\mathsf{c}(x,y,z,g,f)) : A(x,w)$.
\end{itemize}
\end{Ex}
% More generally, any \emph{essentially-algebraic theory} in the sense
% of~\cite{Freyd1972Aspects} can be expressed as a generalised algebraic
% theory; thus notions such as group, ring, vector space, set equipped
% with a binary relation, presheaf on a small category, sheaf on a
% coherent site, and so on, may all be expressed in this way. For more
% examples and motivation, we refer the reader
% to~\cite{Cartmell1986Generalised}.
We now give the formal definition;
here, and throughout the paper, $V$ denotes a fixed denumerable set of
variables.

\begin{Defn}\label{def:rawsyntax}
  \begin{enumerate}[(a)]
  \item 
  Given an alphabet $W$, the collection
  $W^*$ of \emph{expressions} over $W$ is the smallest collection
  of strings closed under the rules:
  \begin{itemize}
  \item If $x \in V$ then $x \in W^*$;
  \item If $n \in \mathbb N$, $e_1, \dots, e_n \in W^*$ and $w \in W$ then
    $w(e_1, \dots, e_n) \in W^*$.
  \end{itemize}
  For the second clause, in the case $n = 0$, we abbreviate $w()$
  simply to $w$.  We now define in the usual manner the \emph{free
    variables} $\mathsf{fv}(e)$ of an expression, and the
  \emph{substitution} $e'[e/x]$ of an expression for a variable in an
  expression.\vskip0.5\baselineskip

\item A \emph{context} over the alphabet $W$ is a (possibly empty)
  string $x_1 : T_1, \dots, x_n : T_n$, where each $x_i$ is a distinct
  element of $V$ and each $T_i$ is in $ W^*$. A \emph{judgement} over
  $W$ is a string taking one of the following four forms:
  \begin{itemize}
  \item A \emph{type judgement} $\Gamma \vdash  T \
    \mathsf{type}$;
  \item A \emph{term judgement} $\Gamma \vdash t : T$;
  \item A \emph{type equality judgement} $\Gamma \vdash  T = T' \
    \mathsf{type}$;
  \item A \emph{term equality judgement} $\Gamma \vdash t = t' : T$,
  \end{itemize}
  where in each case $\Gamma$ is a context over $W$ and $t, t', T, T'
  \in W^*$. The \emph{degree} of a judgement is defined to be one
  greater than the length of its context.  \vskip0.5\baselineskip

  \item
  The \emph{boundary} $\partial(\J)$ of a judgement $\J$ over $W$ 
  is a list of judgements of length $0$, $1$, or $2$, defined as follows:
  \begin{itemize}
  \item $\partial(\vdash T\ty) = ()$;
  \item $\partial(\Gamma, x : T \vdash T'\
    \mathsf{type}) = (\Gamma \vdash T\
    \mathsf{type})$;
  \item $\partial(\Gamma \vdash t : T) = (\Gamma \vdash T\
    \mathsf{type})$;
  \item $\partial(\Gamma \vdash T = T'\
    \mathsf{type}) = (\Gamma \vdash T\
    \mathsf{type},\, \Gamma \vdash T'\
    \mathsf{type})$;
  \item $\partial(\Gamma \vdash t = t' : T) = (\Gamma \vdash t : T,\, \Gamma \vdash t' : T)$.
  \end{itemize}\vskip0.5\baselineskip

  \item  A collection $\Phi$ of judgements over $W$ is \emph{deductively
    closed} if, whenever the hypotheses of one the rules in
  Table~\ref{fig1} is in $\Phi$, so too is the conclusion. 
  \end{enumerate}
\end{Defn}

\begin{table}
\small

\textsc{Equality and $\alpha$ rules}
\begin{equation*}
  \inferrule{\Gamma \vdash t : T }{\Gamma \vdash t = t : T} \qquad
  \inferrule{\Gamma \vdash t_1 = t_2 : T }{  \Gamma \vdash t_2 = t_1
    : T} \qquad
  \inferrule{\Gamma \vdash t_1 = t_2 : T\\ \Gamma \vdash t_2 = t_3 :
    T}{\Gamma \vdash t_1 = t_3 : T}
\end{equation*}
\begin{equation*}
  \inferrule{\Gamma \vdash  T \ty}{\Gamma \vdash T = T \ty} \qquad
  \inferrule{\Gamma \vdash T_1 = T_2 \ty}{  \Gamma \vdash T_2 = T_1
    : T} \qquad
  \inferrule{\Gamma \vdash T_1 = T_2 \ty\\ \Gamma \vdash
    T_2 = T_3 \ty}{\Gamma \vdash T_1 = T_3 \ty}
\end{equation*}\begin{equation*}
  \inferrule{\Gamma \vdash T_1 = T_2\ty \\ \Gamma \vdash t :
    T_1}{\Gamma \vdash t : T_2} \qquad
  \inferrule{\Gamma \vdash T_1 = T_2\ty\\ \Gamma \vdash t_1
    = t_2 :
    T_1}{\Gamma \vdash t_1 = t_2 : T_2}
\end{equation*}
% \vskip0.5\baselineskip
% \textsc{$\alpha$-conversion rule}
\begin{equation*}
  \inferrule*[right=$\sigma
      \in \mathrm{Sym}(V)$]{\Gamma \vdash \J}{\sigma \cdot \Gamma \vdash \sigma \cdot
    \J}
\end{equation*}
(in the last rule, $\J$ denotes one of the four judgment types, and $\sigma \cdot
\Gamma$ and $\sigma \cdot \J$ denote the action of the automorphism
$\sigma$ of $V$ on the variables appearing in $\Gamma$ and $\J$.)
\vskip0.5\baselineskip
\textsc{Weakening rule}
\begin{equation*}
\inferrule*[right={$y \notin \mathsf{fv}(\Gamma) \cup \mathsf{fv}(\Delta)$
    }]{\Gamma \vdash T\ty\\\Gamma, \Delta \vdash \J}{\Gamma, y : T, \Delta
    \vdash \J}
\end{equation*}
\vskip0.5\baselineskip
\textsc{Projection rule}
\begin{equation*}
  \inferrule*[right=$y \notin \mathsf{fv}(\Gamma)$]{\Gamma \vdash T\ty}{\Gamma, y : T
    \vdash y : T}
\end{equation*}
\vskip0.5\baselineskip
\textsc{Substitution rules}
\begin{equation*}
  \inferrule{
  \Gamma \vdash t : T \\ \Gamma, y : T, \Delta \vdash \J}{\Gamma,
  \Delta[t/y] \vdash \J[t / y]}
\end{equation*}
\begin{equation*}
  \inferrule{
  \Gamma \vdash t_1 = t_2: T \\ \Gamma, y : T, \Delta \vdash T'\
  \mathsf{type}}{\Gamma, \Delta[t_2/y]
  \vdash T'[t_1/y] = T'[t_2/y] \ty}
\end{equation*}
\begin{equation*}
  \inferrule{
  \Gamma \vdash t_1 = t_2: T \\ \Gamma, y :
    T, \Delta \vdash t' : T'}{\Gamma,
    \Delta[t_2/y] \vdash t'[t_1/y] = t'[t_2/y] : T'[t_2/y]}
\end{equation*}
% \
% \begin{equation*}
%     \inferrule*[right=$\Sigma$-elim;]{z : \Sigma x:A .\, B(x) \t C(z) \ \ty \\ x : A\c y : B(x) \t d(x, y) : C(\spn{x, y})}
%     {z : \Sigma x:A .\, B(x) \t \Ell_d(z) : C(z)}
% \end{equation*}\
% \begin{equation*}
%     \inferrule*[right=$\Sigma$-comp.]{z : \Sigma x:A .\, B(x) \t C(z) \ \ty \\ x : A\c y : B(x) \t d(x, y) : C(\spn{x, y})}
%     {x : A\c y : B(x) \t \Ell_d(\spn{x, y}) = d(x, y) : C(\spn{x, y})}
% \end{equation*}
\vskip0.5\baselineskip
\caption{Deduction rules for generalised algebraic theories}\label{fig1}
\end{table}

\begin{Defn}~\cite{Cartmell1986Generalised} A \emph{generalised
    algebraic theory} (\textsc{gat}) $\mathbb T$ comprises a
  collection $\Omega$ of type-constants, a collection $\Sigma$ of
  term-constants, and a collection $\Lambda$ of basic judgements over
  $\Omega \amalg \Sigma$ such that:
    \begin{itemize}
 \item For each $A \in \Omega$, there is a unique judgement in $\Lambda$ of
    the form
\[
x_1 : T_1, \dots, x_{n-1} : T_{n-1} \vdash A(x_1, \dots, x_{n-1}) \
\mathsf{type}\rlap{ ,}
\]
and we define the \emph{degree} of $A$ to be the degree of this  judgement;
\item For each $f \in \Sigma$, there is a unique judgement in $\Lambda$ of
  the form
\[
x_1 : T_1, \dots, x_{n-1} : T_{n-1} \vdash f(x_1, \dots, x_{n-1}) : T\rlap{ ,}
\]
and again, we define the \emph{degree} of $f$ to be the degree of this judgement;
\item  All other elements of $\Lambda$ are type equality or term
  equality judgements; 
\item 
  Each element of $\Lambda$ is a derived judgement of $\mathbb T$.
\end{itemize}
Here, the collection $\mathbb T^*$ of derived judgements of $\mathbb
T$ is the smallest deductively closed collection which contains a judgement  $\J
\in \Lambda$ whenever it contains each judgement in its boundary
$\partial(\J)$.
\end{Defn}

  %   (\Gamma \vdash T\ty) \in \Lambda, \ \Gamma \text{ a
  %     justifiable context} & \ \Longrightarrow \ (\Gamma \vdash T\
  %   \mathsf{type}) \in
  %   \Lambda^*\\
  %   (\Gamma \vdash t : T) \in \Lambda, \ (\Gamma \vdash T\
  %   \mathsf{type}) \in \Lambda^* & \ \Longrightarrow \ (\Gamma \vdash
  %   t: T) \in
  %   \Lambda^*\\
  %   (\Gamma \vdash T = T'\ty) \in \Lambda, \ (\Gamma
  %   \vdash T\ty), (\Gamma \vdash T'\ty) \in
  %   \Lambda^* & \ \Longrightarrow \ (\Gamma \vdash T = T'\
  %   \mathsf{type}) \in
  %   \Lambda^*\\
  %   (\Gamma \vdash t = t': T) \in \Lambda, \ (\Gamma \vdash t : T),
  %   (\Gamma \vdash t' : T) \in \Lambda^* & \ \Longrightarrow \ (\Gamma
  %   \vdash t = t': T) \in \Lambda^*\rlap{ ,}
  % \end{align*}
  % where in the first clause, a context $\Gamma = (x_1 : T_1, \dots,
  % x_n : T_n)$ is \emph{justifiable} either if
  % $n = 0$ or if $n>0$ and $(x_1 : T_1, \dots, x_{n-1} : T_{n-1}
  % \vdash T_n \ty) \in \Lambda^*$.
  % \begin{itemize}
  % \item If $\J = $ and $\Gamma$ is a
  %   valid context of $\Phi$, then 
  %   $\J_A \in \Phi$;
  % \item If , then $\J_f \in \Phi$;
  % \item If $\J = (\Gamma \vdash T = T' \ty) \in \Lambda$ and
  %   $(\Gamma \vdash T\ty), (\Gamma \vdash T'\
  %   \mathsf{type}) \in \Phi$, then $\J \in \Phi$;
  % \item If $\J = (\Gamma \vdash t = t' : T) \in \Lambda$ and $(\Gamma
  %   \vdash t : T), (\Gamma \vdash t' : T) \in \Phi$, then $\J \in
  %   \Phi$.
  % \end{itemize}

As we have said, the notion of \textsc{gat} does not incorporate any
of the usual type-forming operations of Martin-L\"of type theory. To
add these, we would extend the expression grammar of
Definition~\ref{def:rawsyntax} with syntax for the type-formers, and
extend Table~\ref{fig1} with the corresponding formation,
introduction, elimination and computation rules;
see~\cite{Streicher1993Investigations,Hofmann1995Extensional} for
detailed treatments in this spirit. 
In this paper, we are interested in understanding the interactions
between the basic structural rules, and so, as anticipated in the
introduction, we will find it more useful to vary the definition of
\textsc{gat} in the other direction, by \emph{removing} some of the
deduction rules. 

\begin{Defn}
  Let $w$, $p$ and $s$ denote the weakening rules, the projection
  rules and the substitution rules, respectively. For any subset $D
  \subset \{w,p,s\}$, we define an ``$D$-\textsc{gat}'' in the same
  manner as a \textsc{gat}, but with the deduction rules of
  Table~\ref{fig1} reduced to those for equality, $\alpha$-conversion
  and the rules in $D$.  Thus a \textsc{gat} in the previous sense is
  equally a ``$\{w,p,s\}$-\textsc{gat}''.
\end{Defn}

In fact, we should restrict this definition slightly. We call $D
\subset \{w,p,s\}$ \emph{decent} if it contains $w$ whenever it
contains $p$; and in what follows, we consider only decent $D$. The
reason for this restriction is that the projection rule requires the
weakening rule to ``make sense''; more precisely, the problem is that
for indecent $D$, the $D$-\textsc{gats} do not have the property that
the boundary of a derivable judgement is again derivable. We exclude
such pathologies by excluding such $D$.

\subsection{Interpretations}
\label{sec:interpretations}
The ($D$-)\textsc{gat}s are the objects of a category, wherein
morphisms are (equivalence-classes of) \emph{interpretations} of one
theory in another. In terms of this, we can, for example, define
set-based models of a \textsc{gat} $\mathbb T$ as interpretations of
$\mathbb T$ in the ``\textsc{gat} of sets and families of sets'', as
described in~\cite[Section~14]{Cartmell1986Generalised}.

\begin{Defn}\label{def:interpretation}\cite{Cartmell1986Generalised}
  An \emph{interpretation} $\phi \colon \mathbb T \to \mathbb U$ of
  ($D$-)\textsc{gat}s is given by an assignation $\phi \colon \mathbb
  T^* \to \mathbb U^*$ on derived judgements such that:
  \begin{itemize}
  \item $\phi$ preserves boundaries; thus
    $\phi(\partial(\J)) = \partial(\phi(\J))$ for all $\J \in
    \mathbb T^*$.
  \item $\phi$ preserves deduction in the following sense. To each
    deduction rule with $n$ premisses, we can associate an $n$-ary
    partial function $h$ from the judgements over a given alphabet to
    itself, sending the hypotheses of the rule to its conclusion. We
    now require that for each $\J_1, \dots, \J_n \in \mathbb T^*$ we
    have $\phi(h(\J_1, \dots, \J_n)) = h(\phi(\J_1), \dots,
    \phi(\J_n))$ whenever both sides are defined.  For example,
    preserving the first equality rule means that:
  \begin{align*}
  \phi(\Gamma \vdash T\ty) = (\Gamma' \vdash T'\
  \mathsf{type})  & \ \Longrightarrow \ \phi(\Gamma
  \vdash T = T\ty) = (\Gamma'  \vdash T' = T'\
  \mathsf{type})
\end{align*}
while preserving the first substitution rule means that
\begin{multline*}
  \phi(\Gamma \vdash t : T) = (\Gamma' \vdash t' : T'), \phi(\Gamma, y
  : T, \Delta \vdash \J) = (\Gamma', y : T', \Delta' \vdash \J') \\
  \ \Longrightarrow\ \phi(\Gamma,
  \Delta[t/y] \vdash \J[t / y]) = (\Gamma',
  \Delta'[t'/y] \vdash \J'[t' / y])\rlap{ .}
\end{multline*}
%The remaining cases are similar. 
\end{itemize}
\end{Defn}

Note that the requirement that an interpretation $\phi$ preserve
deductions means that it is uniquely determined by its action on basic
judgements. More precisely, to specify an interpretation $\phi$ it
suffices to describe where each basic judgement $\J$ of $\mathbb T$ is
sent, and to verify that these choices satisfy $\partial(\phi(\J)) =
\phi(\partial(\J))$; note that, since $\partial(\J)$ is in general
only a derived judgement of $\mathbb T$, the value
$\phi(\partial(\J))$ must be determined from the given values on basic
judgements using the fact that $\phi$ is required to preserve
deduction.

As anticipated above, the morphisms of the category of \textsc{gat}s
will not be interpretations, but rather equivalence classes of
interpretations modulo the equivalence relation of provable equality,
which we now define.

\begin{Defn}
  Let $\mathbb T$ be a ($D$-)\textsc{gat}. The congruence
  $\equiv$ on the derived type judgements of $\mathbb T$ is
  defined by asserting that
\[
  (x_1 : S_1, \dots, x_{n-1} : S_{n-1} \vdash S_{n}\ty) \equiv
    (y_1 : T_1, \dots, y_{m-1} : T_{m-1} \vdash T_{m}\ty)\]
if and only if $n = m$, and for each $1 \leqslant i \leqslant n$,
the judgement
\[
  x_1 : S_1, \dots, x_{i-1} : S_{i-1} \vdash S_i = T_i[x_1/y_1,
  \dots, x_{i-1}/y_{i-1}]\ty
\]
is derivable. The congruence $\equiv$ on the derived term judgements
of $\mathbb T$ is defined by asserting that 
\[
(x_1 : S_1, \dots, x_{n-1} : S_{n-1} \vdash s : S_{n}) \equiv (y_1 :
T_1, \dots, y_{m-1} : T_{m-1} \vdash t : T_{m})\] if and only if their
boundaries are congruent type judgements (in particular $n = m$) and
moreover $\mathbb T$ derives that
\[
  x_1 : S_1, \dots, x_{n-1} : S_{n-1} \vdash s = t[x_1/y_1,
  \dots, x_{n-1}/y_{n-1}] : S_{n}\rlap{ .}
\]
\end{Defn}
\begin{Defn}
  The category $\cat{GAT}$ has generalised algebraic theories as
  objects, and as morphisms, equivalence classes of interpretations
  $\mathbb T \to \mathbb U$, where two interpretations $\phi$ and
  $\phi'$ are deemed equivalent just when $\phi(\J) \equiv \phi'(\J)$
  for each derived type or term judgement of $\mathbb T$. We similarly
  define the category $D\text-\cat{GAT}$ for any decent $D \subset
  \{w,p,s\}$.
\end{Defn}

\section{Type-and-term structures and monadicity}\label{sec:termtype}
\subsection{Type-and-term structures}
We now begin the main task of this paper, that of expressing the
category $\cat{GAT}$ of generalised algebraic theories as the category
of algebras for a monad on a presheaf category. As
discussed in the introduction, the presheaf category at issue will
model the collections of derived judgements of a type theory; more precisely,
it will model the derivable type and term judgements
considered modulo derivable equality.
\begin{Defn}
Let  $\H$ denote the category generated by the graph
\[
\cd{
& 1_t &  2_t & 3_t\\
1 \ar[ur] \ar[r]^{} & 2 \ar[r]^{} \ar[ur]^{}
& 3 \ar[r] \ar[ur]^{} & \cdots\rlap{ .}}
\]
By a \emph{type-and-term structure}, we mean a presheaf $X \in
[\H^\op, \cat{Set}]$. We write the reindexing maps
$X(n_t) \to X(n)$ and $X(n+1) \to X(n)$ as $\partial$, and call
them \emph{boundary maps}. We refer to elements of $X(n)$ as
\emph{type-elements of degree $n$}, and elements of $X(n_t)$ as
\emph{term-elements of degree $n$}.
\end{Defn}

\begin{Rk}\label{rk:game1}
  A type-and-term structure is exactly a \emph{computational arena} in
  the sense of~\cite{Hyland2000On-full}. At the moment, this may
  appear to be a rather fanciful observation, but we will see in
  Remarks~\ref{rk:game2} and~\ref{rk:game3} below that it is part of a
  more substantial link with innocent game semantics.
\end{Rk}

The idea is that, for a type-and-term structure $X$, elements of
$X(n)$ or of $X(n_t)$ should be thought of as $\equiv$-equivalence
classes of type or term judgements of degree $n$ of a dependent
sequent calcluls. The following definition makes this precise.
\begin{Defn}
  Let $D \subset \{w,p,s\}$ be decent. We define the forgetful functor
  $V = V_D \colon D\text-\cat{GAT} \to [\H^\op, \cat{Set}]$ by sending an
  $A$-\textsc{gat} $\mathbb T$ to the type-and-term structure
  $V\mathbb T$ with
\begin{align*}
V\mathbb T(n) &= \{\, (x_1 : T_1, \dots, x_{n-1} : T_{n-1} \vdash T_n\
\mathsf{type}) \in \mathbb T^* \,\} / \mathord \equiv\\
\text{and} \ \ V\mathbb T(n_t) &= \{\,(x_1 : T_1, \dots, x_{n-1} :
T_{n-1} \vdash t : T_n) \in \mathbb T^*\,\}/ \mathord \equiv
\end{align*}
and with the maps $\partial \colon V \mathbb T(n+1) \to V\mathbb T(n)$
and $\partial \colon V\mathbb T(n_t) \to V\mathbb T(n)$
sending the equivalence class of a judgement to the equivalence class
of its boundary.  On maps, $V$ sends an interpretation $\phi
\colon \mathbb T \to \mathbb U$ to the presheaf map $V \phi \colon V
\mathbb T \to V \mathbb U$ with $V\phi([\J]) = [\phi(\J)]$ (note this
is well-defined as an interpretation must preserve boundaries and degrees).
\end{Defn}

In the analysis that follows, we will frequently find that most of the
real action goes on at the level of type-elements, with the
term-elements ``coming along for the ride'' in a fairly
straightforward manner. In light of this, we will find convenient
to introduce the following notation.

\begin{Defn}
  Given $X \in [\H^\op, \cat{Set}]$ and $A \in X(n)$, we write $\Tm X
  A$ for the set of all $a \in X(n_t)$ with $\partial a = A$; given
  $f \colon X \to Y$ in $[\H^\op, \cat{Set}]$, we write $f
  \colon \Tm X {A} \to \Tm Y {f(A)}$ for the restriction of $f$ to
  such term-elements.
\end{Defn}
Now to specify $X \in [\H^\op, \cat{Set}]$, it is enough to give sets
of type-elements and boundary maps $X(1) \leftarrow X(2) \leftarrow
X(3) \leftarrow \dots$ together with sets $\Tm X A$ of term-elements
over each type-element $A$. Similarly, to given a map of presheaves $f
\colon X \to Y$, it is enough to give maps $X(n) \to Y(n)$ for each $n
> 0$ and and maps $\Tm{X}{A} \to \Tm{Y}{f(A)}$ for each $A \in X(n)$.

% Under
% this convention, a map of presheaves $f \colon X \to Y$ may be
% specified by giving its type components $f_i \colon X(i) \to Y(i)$
% satisfying $\partial f_{i+1} = f_i \partial$, compatible with boundary
% maps, together with mappings $\Tm X A \to \Tm Y {f(A)}$ (subject to no
% compatibilities) for each type-element $A$ of $X$.
% \end{Defn}
% If we specify

\subsection{Monadicity}
The following result now tells us that for each decent $D$, we can
present $D\text-\cat{GAT}$ (to within equivalence) as the category of
algebras for a monad on $[\H^\op, \cat{Set}]$. 
\begin{Prop}\label{prop:leftadjmonadic}
  For any decent $D$, the functor $V = V_D \colon D\text-\cat{GAT}
  \to [\H^\op, \cat{Set}]$ has a left adjoint and is monadic.
\end{Prop}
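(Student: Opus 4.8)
The plan is to verify the hypotheses of a standard monadicity theorem — Beck's theorem, or more conveniently here the crude monadicity theorem for varieties of algebraic structure — by exhibiting $D\text-\cat{GAT}$ as a category of models for a (possibly infinitary) algebraic theory over $[\H^\op,\cat{Set}]$, or more hands-on, by constructing the left adjoint explicitly and checking that $V$ creates the relevant coequalizers. Concretely, the first thing to do is to describe the free $D$-\textsc{gat} on a type-and-term structure $X$: its type- and term-constants are the elements of the $X(n)$ and $X(n_t)$, its formation axioms are dictated by the boundary maps (so a type-element $A \in X(n)$ with iterated boundaries $A_{n-1}, A_{n-2}, \dots$ becomes a constant $A$ with axiom $x_1 : A_1, \dots, x_{n-1}: A_{n-1} \vdash A(x_1,\dots,x_{n-1}) \ \mathsf{type}$, and similarly for term-constants), and it carries no equational axioms beyond those forced by the equality and $\alpha$-rules. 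One then checks the universal property: an interpretation of this free theory into $\mathbb{U}$ is, modulo $\equiv$, exactly a choice of where to send each basic judgement compatibly with boundaries — which by the remark following Definition~\ref{def:interpretation} is exactly a map of type-and-term structures $X \to V\mathbb{U}$. Some care is needed because a morphism of \textsc{gat}s is an equivalence class of interpretations, and one must confirm that two interpretations out of the free theory agreeing on basic judgements up to $\equiv$ are identified; but this is immediate since the basic judgements of the free theory generate all derived judgements.

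**Monadicity** then follows once we know $V$ is conservative and creates (or at least preserves and reflects) coequalizers of $V$-split pairs. Conservativity is the statement that an interpretation which is bijective-up-to-$\equiv$ on each $V\mathbb{T}(n)$ and $V\mathbb{T}(n_t)$ is invertible in $D\text-\cat{GAT}$: given such a $\phi : \mathbb{T} \to \mathbb{U}$, one builds an inverse interpretation by sending each basic judgement of $\mathbb{U}$ to (a representative of) its preimage class, using surjectivity to make the choice and injectivity plus boundary-preservation to see the choices are coherent and that the composites are identities modulo $\equiv$. For the coequalizers, the point is that a $D$-\textsc{gat} is determined by very little data — its constants and its equational axioms — all of which lives at the level of the underlying type-and-term structure together with the derivability predicate for equations; so given a $V$-split coequalizer diagram in $[\H^\op,\cat{Set}]$ one equips the colimit presheaf with the constants and axioms transported along the quotient map, and checks this is the unique $D$-\textsc{gat} structure making the quotient an interpretation. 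The splitting is what guarantees the quotient of the congruences $\equiv$ behaves well — it gives a section at the level of underlying structures that lets one lift equality derivations back and forth.

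**The main obstacle** I anticipate is bookkeeping around the equivalence relation $\equiv$ and the passage from interpretations to their equivalence classes. Everything is "obvious" at the level of raw syntax, but $D\text-\cat{GAT}$ has been set up so that its hom-sets are already quotients, and one must consistently track that the constructions (the free theory, the inverse interpretation witnessing conservativity, the structure on the coequalizer) are well-defined on $\equiv$-classes and that no two distinct morphisms of $D\text-\cat{GAT}$ collapse. In particular the verification of conservativity requires knowing that if $\phi$ is an iso on $V\mathbb T$ then the $\equiv$-classes of $\mathbb T$ and $\mathbb U$ correspond not just as sets but compatibly with the derivability of equality judgements, which uses that $\phi$ preserves all the deduction rules (hence reflects derivations, once it is a bijection). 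A secondary subtlety is decency: the assumption that $D$ contains $w$ whenever it contains $p$ is what ensures boundaries of derivable judgements are derivable, and hence that $V\mathbb{T}$ is a well-defined presheaf and that the free-theory construction lands in genuine $D$-\textsc{gat}s; this should be invoked at exactly the point where one argues the free theory's axioms are self-consistent. Modulo these syntactic housekeeping matters, no step is conceptually deep: the proposition is essentially the assertion that "$D$-\textsc{gat}s are models of an algebraic theory over $[\H^\op,\cat{Set}]$", and the proof is the routine extraction of that theory plus an appeal to the monadicity theorem.
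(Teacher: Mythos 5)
Your proposal follows essentially the same route as the paper: construct the left adjoint explicitly as the free $D$-\textsc{gat} whose constants are the type- and term-elements of $X$ with formation judgements dictated by the boundary maps, then apply Beck's theorem, with the crux being the treatment of $V$-split coequalisers, where (as you indicate) the splitting is what lets one lift premisses of deduction rules and control the generated congruence -- the paper makes this precise by noting that definedness of each rule depends only on boundaries, which the section $\ell$ preserves. The only cosmetic difference is packaging: the paper constructs coequalisers of arbitrary pairs in $D\text-\cat{GAT}$ directly (by adjoining basic equality judgements) and then shows $V$ preserves the $V$-split ones, rather than arguing creation by putting a theory structure on the coequaliser presheaf itself.
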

\begin{proof}
  The left adjoint $G = G_D$ has value at a type-and-term structure
  $X$ given by the $D$-\textsc{gat} $GX$ whose type-constants and
  term-constants are the respective type-elements and term-elements of
  $X$, and whose basic judgements are of the form
\[
\J_A = x_1 \ti A_1,\, \dots,\, x_{n-1} \ti
    A_{n-1}(x_1, \dots, x_{n-2})
    \vdash  A(x_1, \dots, x_{n-1}) \ty
\]
for each $A \in X(n)$ with successive boundaries $A_{n-1}, \dots,
A_1$, and
\[
\J_a = x_1 \ti A_1,\, \dots,\, x_{n-1} \ti A_{n-1}(x_1, \dots, x_{n-2})
\vdash a(x_1, \dots, x_{n-1}) \ti A(x_1, \dots, x_{n-1})
\]
for each $a \in \Tm X A$.
The unit of the adjunction $\eta_X \colon X \to VGX$ is given by
$\eta_X(A) = [\J_A]$ and $\eta_X(f) = [\J_f]$; the counit
$\epsilon_\mathbb T \colon GV\mathbb T \to \mathbb T$ at an
$D$-\textsc{gat} $\mathbb T$ is the interpretation defined on basic
judgements by $\epsilon_X(\J_{[\J]}) = \J$.

% \begin{equation*}
% \label{eq:leftadjrules}
% \begin{aligned}
% %  \inferrule{A \in X(n) \\ \partial^k(A) = A_{n-k} \text{ (for $0 < k
% %      < n$)}}{
% \J_A = (x_1 \ti A_1,\, \dots,\, x_{n-1} \ti
%     A_{n-1}(x_1, \dots, x_{n-2})
%    & \vdash  A(x_1, \dots, x_{n-1}) \ty)\\%} \\
% %  \inferrule{f \in X(\overline n) \\ \partial(f) = A \\ \partial^k(A) = A_{n-k} \text{      (for $0 <  k < n$)}}{
% \J_f = (x_1 \ti A_1,\, \dots,\,
%     x_{n-1} \ti A_{n-1}(x_1, \dots, x_{n-2}) &\vdash f(x_1,
%     \dots, x_{n-1}) \ti 
%     A(x_1, \dots, x_{n-1}))\rlap{ .}
% \end{aligned}
% for each 
% \end{equation*}
The monadicity of $V$ is verified by an application of Beck's
theorem~\cite{Mac-Lane1971Categories}. First note that $D\text-\cat{GAT}$ has all
coequalisers: indeed, given interpretations $\phi, \psi \colon \mathbb T
\rightrightarrows \mathbb U$, their coequaliser $\mathbb U'$ is
obtained from $\mathbb U$ by adjoining a basic equality judgement
$\Gamma \vdash T = T'\ty$ whenever $\phi(\J) = (\Gamma \vdash T\ty)$
and $\psi(\J) = (\Gamma' \vdash T'\ty)$ for some basic type judgement
$\J$ of $\mathbb T$; and similarly adjoining a basic equality
judgement $\Gamma \vdash t = t' : T$ whenever $\phi(\J) = (\Gamma
\vdash t : T)$ and $\psi(\J) = (\Gamma' \vdash t' : T')$ for some
basic term judgement $\J$ of $\mathbb T$. The evident interpretation
$\mathbb U \to \mathbb U'$ exhibits $\mathbb U'$ as the coequaliser of
$\phi$ and $\psi$.  

It is easy to see that $V$ reflects isomorphisms, and so to verify
monadicity, it remains to show that $V$ preserves coequalisers of
$V$-split coequaliser pairs.
% To show that $V$ is monadic, we must prove that it
% reflects isomorphisms,
%%%%%: indeed, if $\phi \colon \mathbb T \to \mathbb
% U$ has $V\phi \colon V\mathbb T \to V\mathbb U$ invertible, then we
% obtain an interpretation $\psi \colon \mathbb U \to \mathbb T$ inverse
% to $\phi$ by sending a basic type or term judgement $\J$ of $\mathbb
% U$ to some equivalence-class representative of
%$(V\phi)^{-1}([\J])$. 
% and that $A$-$\cat{GAT}$ has, and $V$ preserves, coequalisers of
% $V$-split coequaliser pairs. The former is easy; as for the latter,
Let $\phi, \psi \colon \mathbb T \rightrightarrows
\mathbb U$ be interpretations, and let
\[
\cd
{
V\mathbb T \ar@<3pt>[r]^{V\phi}
\ar@<-3pt>[r]_{V\psi} & V\mathbb U \ar[r]^{p}\ar@/^18pt/[l]^\ell & Z \ar@/^8pt/[l]^m
}
\]
be a split coequaliser diagram in $[\H^\op, \cat{Set}]$: thus $pm =
1$, $mp = V\psi.\ell$ and $1 = V\phi.\ell$, and these equations force
$p$ to coequalise $V\phi$ and $V\psi$. We must show that the
coequaliser $q \colon \mathbb U \to \mathbb U'$ of $\phi$ and $\psi$
in $\cat{GAT}$ is preserved by $V$, i.e., that the comparison map
$Vq.m \colon Z \to V\mathbb U'$ in $[\H^\op, \cat{Set}]$ is
invertible.  From the above explicit description of coequalisers in
$D\text-\cat{GAT}$, it is easy to see that the sets comprising
$V\mathbb U'$ are obtained by quotienting out those comprising
$V\mathbb U$ by the smallest equivalence relation $\sim$ such that:
\begin{enumerate}[(a)]
\item $\phi(\J) \sim \psi(\J)$ for each basic type or term judgement
  of $\mathbb T$;
\item $\J_{11} \sim \J_{12}$, \dots, $\J_{k1} \sim \J_{k2}$ implies
  $\J_1 \sim \J_2$ whenever one of the rules of Table~\ref{fig1}
  derives $\J_i$ from $\J_{1i}, \dots, \J_{ki}$ (for $i = 1,2$).
\end{enumerate}
To show that the comparison map $Z \to V\mathbb U'$ is invertible is
equally to show that $a \sim b$ implies $p(a) = p(b)$ for all $a,b$ in
$V\mathbb U$. Clearly $p(\phi(\J)) = p(\psi(\J))$ for all basic type
or term judgements of $\mathbb T$ since $p$ coequalises $V\phi$ and
$V\psi$; it remains to show that if 
% On the other hand, $Z$ is obtained from $V\mathbb U$ by quotienting
% out by the smallest equivalence relations $\approx$ such that:
% \begin{enumerate}[(a)]
% \item[(a')] $\phi([\J]) \approx \psi([\J])$ for each derived type or term
%   judgement $\J \in \mathbb T^*$.
% \end{enumerate}
% We must show that $\sim$ and $\approx$ coincide. Clearly $\mathord
% \approx \subseteq \mathord \sim$ since $\phi$ and $\psi$ are
that if $p(\J_{j1}) = p(\J_{j2})$ in $V \mathbb U$ (for $j = 1,
\dots, k$) and one of the rules of Table~\ref{fig1} derives $\J_i$
from $\J_{1i}, \dots, \J_{ki}$ (for $i = 1,2$), then $p(\J_1) =
p(\J_2)$. As in Definition~\ref{def:interpretation}, write $h$ for
the $k$-ary partial function on judgements associated to the
derivation rule at issue; thus we have $h(\J_{1i}, \dots, \J_{ki}) =
\J_i$ (for $i = 1,2$). Examining the form of the rules in
Table~\ref{fig1}, we see that definedness of $h$ depends only on
conditions involving boundaries; since $\ell \colon V \mathbb U \to V
\mathbb T$ preserves boundaries, we conclude that $h(\ell(\J_{1i}),
\dots, \ell(\J_{ki}))$ is defined for $i = 1,2$; and now
\begin{align*}
p(\J_i) & = p(h(\J_{1i}, \dots, \J_{ki}))
= p(h(\phi \ell(\J_{1i}),\dots, \phi\ell(\J_{ki})))\\
&= p(\phi(h(\ell(\J_{1i}), \dots, \ell(\J_{ki}))) 
 = p(\psi(h(\ell(\J_{1i}), \dots, \ell(\J_{ki}))) \\
&= p(h(\psi \ell(\J_{1i}),\dots, \psi\ell(\J_{ki}))
= p(h(mp(\J_{1i}),\dots, mp(\J_{ki})))
\end{align*}
for $i = 1,2$. But since $p(\J_{j1}) = p(\J_{j2})$ for $j = 1, \dots,
k$, we conclude that $p(\J_1) = p(\J_2)$ as required.
% The monadicity of $V$ is verified by an application of Beck's
%  theorem~\cite{Mac-Lane1971Categories}. It is easy to see
% that $V$ reflects isomorphisms, and that $A\text-\cat{GAT}$ has all
% coequalisers (and is in fact cocomplete); it thus remains to show that
% $V$ preserves coequalisers of $V$-split coequaliser pairs. This is an
% essentially standard argument, and since we will not need the details
% in what follows, we omit them.
\end{proof}
When $D = \emptyset$, we have a considerably stronger result: $V_\emptyset$ is
an equivalence. This explains why have chosen to view \textsc{gat}s as
monadic over $[\H^\op, \cat{Set}]$, rather than over some other presheaf
category.

\begin{Prop}\label{prop:equiv1}
  $V_\emptyset \colon \emptyset\text-\cat{GAT} \to [\H^\op, \cat{Set}]$ is an
  equivalence of categories.
\end{Prop}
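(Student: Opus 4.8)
The plan is to exploit the monadicity already proved in Proposition~\ref{prop:leftadjmonadic}. Since $V_\emptyset$ is monadic it reflects isomorphisms, so it suffices to prove that the unit $\eta_X \colon X \to V_\emptyset G_\emptyset X$ of the adjunction $G_\emptyset \dashv V_\emptyset$ is invertible for each type-and-term structure $X$. Indeed, granted this, the triangle identity $V_\emptyset\epsilon \cdot \eta V_\emptyset = 1$ forces $V_\emptyset \epsilon$ to be invertible; as $V_\emptyset$ reflects isomorphisms, $\epsilon$ is then invertible as well, so $G_\emptyset \dashv V_\emptyset$ is an adjoint equivalence, and in particular $V_\emptyset$ is an equivalence.

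Proving $\eta_X$ invertible amounts to describing the derived judgements of the free $\emptyset$-\textsc{gat} $G_\emptyset X$. Here the only deduction rules in play are those for equality and $\alpha$-conversion, and a straightforward induction on degree shows that the derivable judgements of $G_\emptyset X$ are precisely: (i) the $\alpha$-renamings $\sigma \cdot \J_A$ of the basic type judgements, one orbit for each $A \in X(n)$; (ii) the $\alpha$-renamings $\sigma \cdot \J_a$ of the basic term judgements, one orbit for each $a \in \Tm X A$; and (iii) the reflexivity judgements $\Gamma \vdash T = T \ty$ and $\Gamma \vdash t = t : T$ deduced from the judgements in (i) and (ii). The heart of the matter is that no rule of Table~\ref{fig1}, once restricted to equality and $\alpha$, manufactures a genuinely new plain type or term judgement: the $\alpha$-rule only renames variables; symmetry and transitivity act within the class of equality judgements; and the two type-conversion rules are vacuous, since by (iii) the only derivable type-equality judgement with a prescribed context $\Gamma$ and left-hand side $T$ is $\Gamma \vdash T = T \ty$ itself.

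Given this description, $V_\emptyset G_\emptyset X(n)$ is the set of $\alpha$-renamings of the $\J_A$ with $A \in X(n)$, taken modulo the congruence $\equiv$. A second induction on degree -- again using (iii), now to see that all the type-equality judgements demanded by the definition of $\equiv$ are reflexive -- shows that $\sigma \cdot \J_A \equiv \tau \cdot \J_B$ holds exactly when $A = B$ (here one uses that the successive boundaries $A_{n-1}, \dots, A_1$ are determined by $A$). Hence $\eta_X \colon A \mapsto [\J_A]$ is a bijection $X(n) \to V_\emptyset G_\emptyset X(n)$, and similarly $a \mapsto [\J_a]$ is a bijection $\Tm X A \to \Tm{V_\emptyset G_\emptyset X}{[\J_A]}$; these bijections respect the boundary maps, directly by the construction of $G_\emptyset X$, so $\eta_X$ is an isomorphism of presheaves, and it is natural in $X$ because $\eta$ is an adjunction unit. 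Together with the first paragraph, this proves the proposition.

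The step I expect to be the main obstacle is establishing the description (i)--(iii) of the derived judgements of $G_\emptyset X$: one must argue, from the fact that $(G_\emptyset X)^*$ is the \emph{smallest} deductively closed collection containing each basic judgement whose boundary it already contains, that no spurious judgements arise -- so that a $\emptyset$-\textsc{gat} really is as combinatorially rigid as the absence of structural rules suggests. Everything afterwards is a matter of transporting this description through the definitions of $V_\emptyset$ and of the congruence $\equiv$.
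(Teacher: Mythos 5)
Your proposal is correct and follows essentially the same route as the paper: reduce via monadicity to invertibility of the unit of the induced monad, and establish that by an induction on derivations showing every derived judgement of $G_\emptyset X$ is $\equiv$-equivalent to a unique basic one. The paper leaves this induction as an "easy" exercise, and your description (i)--(iii) of the derivable judgements, together with the observation that the conversion rules are vacuous because all derivable equalities are reflexive, is exactly the content of that exercise.
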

\begin{proof}
  We already know that $V_\emptyset$ has a left adjoint $G_\emptyset$ and is monadic, so
  it suffices to check that the unit $\eta \colon 1 \Rightarrow
  V_\emptyset G_\emptyset$ of
  the induced monad is invertible; thus, that for any $X \in [\H^\op,
  \cat{Set}]$, each derived type or term judgement of $G_\emptyset X$ is
  $\equiv$-equivalent to a unique basic one.  This
  follows by an easy induction on derivations.
% Given $\phi, \psi \colon \mathbb T \to \mathbb U$, if $V\phi =
%   V\psi$, then for every basic type or term judgement $\J \in
%   \Lambda_\mathbb T$, we have $\phi(\J) \equiv \psi(\J)$, and so $\phi
%   = \psi$. Thus $V$ is faithful. It is also full: given $h \colon
%   V\mathbb T \to V\mathbb U$, define $\bar h \colon \mathbb T \to \mathbb U$ on
%   a basic type or term judgement $\J \in \Lambda_\mathbb T$  by taking
%   $\bar h(\J)$ to be any judgement in the $\equiv$-equivalence class
%   of $h([\J])$. It is easy to see that 
\end{proof}
In the following sections, we describe the monads $V_D G_D$ on
$[\H^\op, \cat{Set}]$ for various decent $A \subset \{w,p,s\}$. Our
eventual objective is to do so in the case $A = \{w,p,s\}$, which we
do in Section~\ref{sec:combining} below. We do this by way of various
simpler cases: $A = \{w\}$ in Section~\ref{sec:weaken}, $A = \{w,p\}$
in Section~\ref{sec:project}, and $A = \{s\}$ in
Section~\ref{sec:subst}. For the moment, let us record those
aspects of the analysis common to all cases.

\begin{Prop}\label{prop:commonanalysis}
  For any decent $A \subset \{w,p,s\}$ and any $X \in [\H^\op,
  \cat{Set}]$, the only derivable type-equalities or term-equalities
  of the free $D$-\textsc{gat} on $X$ are reflexivity judgements
  $\Gamma \vdash T = T \ty$ or $\Gamma \vdash t = t : T$. It follows
  that type- or term-elements of $V_DG_D(X)$ are $\alpha$-equivalence
  classes of derivable judgements in the free $D$-\textsc{gat} on $X$.
\end{Prop}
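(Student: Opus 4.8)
The plan is to prove the first assertion by induction over the inductive generation of the set $(G_DX)^\ast$ of derived judgements of the free $D$-\textsc{gat} on $X$, and then to obtain the second assertion by unfolding the definition of the congruence $\equiv$. The crucial preliminary observation is that, since $X$ carries no equality data, the collection $\Lambda$ of basic judgements of $G_DX$ consists solely of the type judgements $\J_A$ (one for each type-element $A$) and the term judgements $\J_a$ (one for each term-element $a$); consequently no basic equality judgement is ever adjoined when forming $(G_DX)^\ast$, and the clause of Cartmell's definition that adjoins a basic judgement once its boundary has been derived only ever contributes type or term judgements.

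It therefore suffices to show, by induction on the stage at which a judgement enters $(G_DX)^\ast$, that every derivable type-equality has the form $\Gamma \vdash T = T\ty$ and every derivable term-equality has the form $\Gamma \vdash t = t : T$. By the preceding observation, in the induction step an equality judgement can arise only as the conclusion of one of the rules of Table~\ref{fig1} (restricted to equality, $\alpha$-conversion and the rules in $D$) applied to premises lying at earlier stages. The reflexivity rules produce a reflexivity judgement outright; for symmetry, transitivity and the two equality-conversion rules, the inductive hypothesis makes every equality premise a reflexivity judgement, whence a one-line inspection shows the conclusion is one too; and the $\sigma$-rule and the weakening rule visibly carry reflexivity judgements to reflexivity judgements.

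The substitution rules are where a little care is needed. The first substitution rule takes a generic premise $\J$ to $\J[t/y]$; when $\J$ is an equality it is, by the inductive hypothesis, a reflexivity judgement $\Gamma, y : T, \Delta \vdash S = S : R$, and substituting $t$ for $y$ acts identically on the two sides, so $\J[t/y]$ is again a reflexivity judgement. The second and third substitution rules each have an equality premise $\Gamma \vdash t_1 = t_2 : T$, which by the inductive hypothesis is a reflexivity judgement, so $t_1$ and $t_2$ are literally the same expression; hence $T'[t_1/y]$ and $T'[t_2/y]$ — respectively $t'[t_1/y]$ and $t'[t_2/y]$ — coincide, and the conclusion is again a reflexivity judgement. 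Since the projection rule produces only term judgements, this exhausts the case analysis and establishes the first assertion; the only genuine obstacle, such as it is, lives exactly in this step, where one must confirm both that the rules genuinely producing equalities fire only on reflexivity hypotheses and that the generic substitution rule cannot manufacture a non-reflexive equality out of a non-equality premise.

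For the second assertion, recall that $V_DG_D(X)(n)$ and $V_DG_D(X)(n_t)$ are, respectively, the sets of derivable type and term judgements of degree $n$ in $G_DX$ modulo the congruence $\equiv$. By the first assertion, derivability of $x_1 : S_1, \dots, x_{i-1} : S_{i-1} \vdash S_i = T_i[x_1/y_1, \dots, x_{i-1}/y_{i-1}]\ty$ amounts to the syntactic identity $S_i = T_i[x_1/y_1, \dots, x_{i-1}/y_{i-1}]$, so that two derivable type judgements are $\equiv$-congruent precisely when they differ only by a renaming of the variables declared in their contexts, that is, are $\alpha$-equivalent; the same holds for term judgements. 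Hence $\equiv$, restricted to derivable judgements, is $\alpha$-equivalence, and $V_DG_D(X)$ is as described. The argument is uniform in the decent set $D$, since deleting deduction rules only shrinks $(G_DX)^\ast$.
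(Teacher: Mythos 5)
Your proof is correct and follows the same route as the paper, whose entire proof is the phrase ``by induction on derivations'': you simply make explicit the case analysis over the rules of Table~\ref{fig1} (noting that the free $D$-\textsc{gat} has no basic equality judgements) and then unwind the congruence $\equiv$ to $\alpha$-equivalence. Nothing further is needed.
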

\begin{proof}
By induction on derivations.
% Given $\phi, \psi \colon \mathbb T \to \mathbb U$, if $V\phi =
%   V\psi$, then for every basic type or term judgement $\J \in
%   \Lambda_\mathbb T$, we have $\phi(\J) \equiv \psi(\J)$, and so $\phi
%   = \psi$. Thus $V$ is faithful. It is also full: given $h \colon
%   V\mathbb T \to V\mathbb U$, define $\bar h \colon \mathbb T \to \mathbb U$ on
%   a basic type or term judgement $\J \in \Lambda_\mathbb T$  by taking
%   $\bar h(\J)$ to be any judgement in the $\equiv$-equivalence class
%   of $h([\J])$. It is easy to see that 
\end{proof}

\section{Weakening}\label{sec:weaken}
In this section, we describe the additional structure imposed on a
type-and-term structure by the weakening rules; in other words, we
will characterise the \emph{weakening monad} $W$ induced by the
free-forgetful adjunction $\{w\}\text-\cat{GAT} \leftrightarrows
[\H^\op, \cat{Set}]$.

\subsection{Underlying endofunctor}
Our first step will be to describe the
underlying endofunctor of the weakening monad. Its basic combinatorics are
controlled by the notion of \emph{min-heap}. Here, and throughout the rest of the paper, we
use the notation $[a, b]$ to indicate the set of natural numbers
$\{a, a+1, \dots, b-1, b\}$, and write $[n]$ to mean $[1, n]$.
% First we describe the value $W1$ of the weakening monad at the terminal presheaf;

\begin{Defn}
  A \emph{min-heap} of size $n$ is a function $\phi \colon [0,n] \to
  [0,n]$ such that $\phi(0) = 0$ and $\phi(i) < i$ for all $i \in
  [n]$.  We write $\mathrm{Hp}(n)$ for the set of min-heaps of size 
  $n$. For $\phi \in \mathrm{Hp}(n+1)$, we write $\partial(\phi)$ for
  $\res{\phi}{[0,n]} \in \mathrm{Hp}(n)$. For $\phi \in
  \mathrm{Hp}(n)$ and $i \in [n]$, the \emph{depth} of $i$ in $\phi$
  is defined to be $\mathsf{dp}_\phi(i) = \abs{\{\phi(i), \phi^2(i),
    \dots\}}$.
\end{Defn}
\newcommand{\hp}{\preccurlyeq}
\newcommand{\dn}{\mathord \downarrow}

To give $\phi \in \mathrm{Hp}(n)$ is equally to give a partial order
$\hp_\phi$ on $[n]$ that is contained in the natural ordering (so $i
\hp_\phi j$ implies $i \leqslant j$) and such that each downset $\dn i
= \{ j : j \hp_\phi i \}$ is a linear order.  The partial order
corresponding to a min-heap $\phi$ is given by $i \hp_\phi j$ iff $i =
\phi^k(j)$ for some $k$; conversely, the function associated to a
partial order $\hp$ is given by $\phi(j) = \max \{ i \in [0,j-1] : i =
0 \text{ or } i \hp j\}$. In what follows, we will more frequently use the
functional representation of min-heaps, but will switch
to the relational representation where this is more convenient.

We may depict $\phi \in \mathrm{Hp}(n)$ by drawing the Hasse diagram
of $\hp_\phi$, which is a non-plane, directed forest with nodes $\{1,
\dots, n\}$, wherein the values labelling the nodes decrease along any
directed path.  For example:
\[
\begin{aligned}
\phi(1) &= 0 & \phi(5) &= 2\\
\phi(2) &= 1 & \phi(6) &= 3\\
\phi(3) &= 0 & \phi(7) &= 1\\
\phi(4) &= 2 & \phi(8) &= 6
\end{aligned} \qquad \longleftrightarrow \qquad
\cd[@-1em]{
4 \ar[dr] & & 5 \ar[dl] & 8 \ar[d] \\ &
2 \ar[d] & 7 \ar[dl] & 6 \ar[d]\\ & 
1 & & 3\rlap{ .}
}
\]
A forest of this kind with $n$ nodes can be seen as specifying the
shape of a type judgement of degree $n$ in a free
$\{w\}$-\textsc{gat}; the numbers describe the ordering of types in
the context, and the arrows indicate the dependencies between
them. The following definition makes this precise.
% It can also be thought of as specifying the
% shape of a \emph{term} judgement of degree $n$, for a term whose type
% is that of the largest node $n$.  The following definition make these
% intuitions precise.

\begin{Defn}
%  \begin{enumerate}[(a)]
%    \item 
Given $\phi \in \mathrm{Hp}(n)$, we define the presheaf $[\phi] \in
  [\H^\op, \cat{Set}]$ encoding a type judgement of shape $\phi$ by:
\begin{align*}
\ab\phi(m) &= \{ i \in [ n] : \mathsf{dp}_\phi(i) = m\} & \ab\phi(m_t) = \emptyset
\end{align*}
with the non-trivial boundary maps $\partial \colon \ab\phi(m+1) \to
\ab\phi(m)$ given by $i \mapsto \phi(i)$. 
%If $n > 0$, there is an
%evident inclusion map $\ab{\partial(\phi)} \to \ab{\phi}$, which we
% write as $\iota$.
% \vskip0.5\baselineskip
% \item Given $\phi \in \mathrm{Hp}(n)$, we define the presheaf $\ab
%   {\phi}_t \in [\H^\op, \cat{Set}]$ encoding a term judgement of
%   shape $\phi$ as that obtained from $\ab \phi$ by adjoining a
%   term-element $n_t$ satisfying $\partial(n_t) =
%   n$.
% % We write $\jmath \colon \ab{\phi} \to \ab{\phi}_t$ for the
% %  evident inclusion.
% \end{enumerate}
\end{Defn}
So for any $X \in [\H^\op, \cat{Set}]$, to give a map $h \colon \ab
\phi \to X$ in $[\H^\op, \cat{Set}]$ is to give type-elements $h(1),
\dots, h(n)$ of $X$ such that each $h(i)$ is of degree
$\mathsf{dp}_\phi(i)$, and such that $\partial(h(i)) = h(j)$ whenever
$\phi(i) = j$. If $n > 1$, we write $\partial h$ for the restriction
of $h$ along the evident inclusion $\ab{\partial \phi} \to \ab \phi$.
% In a similar manner, a map $\ab{\phi}_t \to X$ is a pair $(h,a)$ where $h
% \colon \ab \phi \to X$ as above together and $a$ is a term-element of $X$ with
% boundary $h(n)$.

\begin{Prop}\label{prop:wkendo}
  The value at $X \in [\H^\op, \cat{Set}]$ of the underlying
  endofunctor of the weakening monad has type-elements and boundaries
  given by
   \begin{gather*}
  WX(n) = \sum_{\phi \in \mathrm{Hp}(n)}\, [\H^\op, \cat{Set}](\ab
  \phi, X)  \qquad
\begin{aligned}[t]
  \partial \colon
WX(n+1) & \to WX(n) \\ 
 (\phi, h) & \mapsto (\partial \phi, \partial h)
\end{aligned}
\end{gather*}
and term-elements $\Tm {WX} {\phi, h} = \Tm X {h(n)}$ for each $(\phi,
h) \in WX(n)$.
\end{Prop}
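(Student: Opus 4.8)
The plan is to unwind the definition of the weakening monad $W = V_{\{w\}}G_{\{w\}}$ and then analyse the derivations available in a free $\{w\}$-\textsc{gat}. By Proposition~\ref{prop:commonanalysis}, the type-elements of $WX$ of degree $n$ are exactly the $\alpha$-equivalence classes of derivable type judgements of degree $n$ in $G_{\{w\}}X$, and likewise $\Tm{WX}{A}$ consists of $\alpha$-classes of derivable term judgements with boundary $A$; so it suffices to set up a bijection between these classes and $\sum_{\phi\in\mathrm{Hp}(n)}[\H^\op,\cat{Set}](\ab\phi,X)$, and then to check compatibility with the boundary maps and with the fibrewise description of term-elements.

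To each $(\phi,h)$ with $\phi\in\mathrm{Hp}(n)$ and $h\colon\ab\phi\to X$ I would associate the degree-$n$ judgement $\J_{(\phi,h)}$ whose $i$-th context entry (for $1\leqslant i\leqslant n-1$) is $x_i$ of type $h(i)(x_{j_1},\dots,x_{j_d})$, where $j_1\hp_\phi\cdots\hp_\phi j_d$ enumerates the proper downset $\{\phi(i),\phi^2(i),\dots\}\setminus\{0\}$ of $i$, and whose subject is $h(n)$ applied likewise to the proper downset of $n$; the min-heap condition makes each downset a $\hp_\phi$-chain, and since $\mathsf{dp}_\phi(i)=\abs{\dn i}$ and $h(i)\in X(\mathsf{dp}_\phi(i))$ these are well-formed expressions of the right arity. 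I would then prove $\J_{(\phi,h)}$ derivable by induction on $n$: if every node lies in $\dn n$ it is, up to $\alpha$, the basic judgement $\J_{h(n)}$; otherwise one picks a leaf $p\neq n$ of the forest underlying $\phi$ --- one exists since, starting from any node outside $\dn n$ and repeatedly passing to a child, one stays outside $\dn n$ and reaches a leaf --- and observes that, up to $\alpha$, $\J_{(\phi,h)}$ arises by a single weakening inserting $x_p$ from $\J_{(\phi',h')}$, the restriction of $(\phi,h)$ to $[n]\setminus\{p\}$, whose two premisses $\J_{(\phi',h')}$ and $\J_{(\psi,g)}$ (the restriction of $(\phi,h)$ to $[p]$, asserting $h(p)$ over its downset) are derivable by induction.

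For the reverse direction I would prove, by induction on derivation length, that \emph{every} derivable type judgement of $G_{\{w\}}X$ has each context entry and its subject of the form $B(x_{j_1},\dots,x_{j_d})$ with $B\in X$ of degree $d$ and $j_1<\cdots<j_d$ strictly preceding the entry and forming a $\hp$-chain: this holds for basic judgements, is preserved by weakening (which only inserts an entry and renames) and by $\alpha$-conversion, and is untouched by the equality rules, which by Proposition~\ref{prop:commonanalysis} produce only reflexivities. Recovering $\phi(i)$ as the largest index occurring in the $i$-th context entry (or $0$) and $h(i)$ as its head constant then gives $(\phi,h)$ with $\J_{(\phi,h)}$ $\alpha$-equal to the given judgement; these assignments are $\alpha$-invariant and plainly inverse to $(\phi,h)\mapsto[\J_{(\phi,h)}]$, so the bijection is established. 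Finally, $\partial\J_{(\phi,h)}$ is up to $\alpha$ equal to $\J_{(\partial\phi,\partial h)}$, which yields the formula for $\partial\colon WX(n+1)\to WX(n)$; and the analogous analysis of term judgements --- now using that the only basic term judgements are the $\J_a$, so that a derivable term judgement with boundary $\J_{(\phi,h)}$ comes, up to $\alpha$, by weakening from $\J_a$ for a unique $a$ with $\partial a=h(n)$ --- yields $\Tm{WX}{\phi,h}=\Tm{X}{h(n)}$, compatibly with $\partial\colon WX(n_t)\to WX(n)$, while naturality in $X$ (that $Wf$ sends $(\phi,h)$ to $(\phi,f\circ h)$) is immediate from the action of $G_{\{w\}}$ on morphisms. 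I expect the main obstacle to be the structural induction isolating this normal form for derivable judgements; once it is in place, the rest is routine bookkeeping.
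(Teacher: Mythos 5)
Your proposal is correct and follows essentially the same route as the paper's proof: reduction via Proposition~\ref{prop:commonanalysis}, an induction on derivations establishing the normal form of judgements in the free $\{w\}$-\textsc{gat} (the paper's \eqref{eq:welldef}), and derivability of $\J(\phi,h)$ by induction on $n$, stripping a non-maximal leaf and applying a single weakening, with the same treatment of boundaries and of term-elements. The only differences are presentational (downsets versus iterated $\phi$, and merging the paper's two derivation inductions into one normal-form lemma).
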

%Recall that a functor is called \emph{finitary} if it preserves
%filtered colimits.
\begin{proof}
  % By induction on derivations, one may show that the only derivable
  % type equalities of the free $\{w\}$-\textsc{gat} on $X$ are
  % reflexivities $(\Gamma \vdash T = T \ty)$. Thus type-elements of
  % $WX$ are $\alpha$-equivalence classes of derivable type judgements
  % of this free $\{w\}$-\textsc{gat}. Any such equivalence class
By Proposition~\ref{prop:commonanalysis}, type-elements
  of $WX$ are $\alpha$-equivalence classes of type judgements
  in the free $\{w\}$-\textsc{gat} on $X$.  Each
  such equivalence class contains a unique judgement of the form
\begin{equation}\label{eq:jweaken}
\J = x_1 \ti T_1,\, \dots,\, x_{n-1} \ti T_{n-1} \vdash T_n\ty\rlap{ .}
\end{equation}
Now by induction on derivations, we may show that each $T_i$ is of the
form $A_i(x_{j_1}, \dots, x_{j_{k-1}})$ for some $A_i \in X(k)$ and $0
< j_1< \dots < j_{k-1} < i$. For each $i \in [n]$, we define $h(i) =
A_i$, and $\phi(i) = 0$ if $k = 1$ and $\phi(i) = j_{k-1}$
otherwise. Taking also $\phi(0) = 0$, we obtain a min-heap $\phi \in
\mathrm{Hp}(n)$, and by a further induction on derivations, we see
that
\begin{equation}\label{eq:welldef}
\text{if $T_i$ is $A(x_{j_1}, \dots, x_{j_{k-1}})$ and $k > 1$, then
  $T_{j_{k-1}}$ is $(\partial A)(x_{j_1}, \dots, x_{j_{k-2}})$,}
\end{equation}
% some $n_i > 0$ and $\phi_i \colon \{1, \dots, n_i-1\} \to \{1, \dots, i-1\}$;
% \item $n_{\phi_i(n_i-1)} = n_i - 1$;
% \item $\res{(\phi_i)}{n_i - 2} = \phi_{\phi_i(n_i-1)}$;
% \end{enumerate}
% For each $1 \leqslant i \leqslant n$,\vskip0.5\baselineskip
% To any $\J$ as in~\eqref{eq:jweaken} we associate the min-heap
% $\phi_\J \in \mathrm{Hp}(n)$ defined by
% \[\phi_\J(i) =
% \begin{cases}
%   0 & \text{if $T_i$ is $A$ for some $A \in X(1)$;} \\ j_{k-1} &
%   \text{if $T_i$ is $A(x_{j_1}, \dots, x_{j_{k-1}})$ with $k > 1$;}
% \end{cases}
% \]
which  implies that $h$ is a well-defined map $ \ab{\phi}
\to X$. We thus have a function
\begin{equation}
\label{eq:mappingweak}
\begin{aligned}
\theta \colon WX(n) & \to \textstyle \sum_{\phi \in \mathrm{Hp}(n)} [\H^\op,
\cat{Set}](\ab \phi, X)\\
[\J] & \mapsto (\phi_\J, h_\J)
\end{aligned}
\end{equation}
Now \eqref{eq:welldef} ensures that $\theta$ is injective, and it is
clear that $\theta \partial = \partial \theta$. It remains to show
surjectivity of $\theta$.  Given a heap $\phi \in \mathrm{Hp}(n)$ and
$h \colon \ab \phi \to X$, we define a judgement $\J = \J(\phi, h)$ as
in~\eqref{eq:jweaken} by taking each $T_i$ to be $h(i)(x_{j_1}, \dots,
x_{j_k})$, where here $k = \mathsf{dp}_\phi(i)$ and $j_\ell =
\phi^{k-\ell}(i)$ for $\ell \in [k-1]$. This $\J$ will then satisfy
$(\phi_\J, h_\J) = (\phi, h)$ so long as it is in fact derivable in
the free $\{w\}$-\textsc{gat} on $X$.

We prove this by induction on $n$.  The base case $n = 1$ is clear;
suppose then that $n > 1$. If $\phi(i) = i-1$ for all $i > 0$, then
$\J(\phi, h)$ is a basic judgement, thus clearly
derivable. Otherwise, there must exist some $m < n$ which is not in
the image of $\phi$ (in the corresponding forest, such an $m$ amounts
to a leaf which is not the maximal node). Let $\res \phi m =
\res{\phi}{[0,m]}$ in $\mathrm{Hp}(m)$, and let $\phi \!\setminus\! m
\in \mathrm{Hp}(n-1)$ be defined by
\begin{equation}\label{eq:phiminus}
(\phi \!\setminus\! m) (i) = \begin{cases}
\phi(i) & i < m \rlap{ ;}\\
\phi(i+1) & i \geqslant m \text{ and } \phi(i+1) < m \rlap{ ;}\\
\phi(i+1) -1 & i \geqslant m \text{ and } \phi(i+1) > m\rlap{ ;}
\end{cases}
\end{equation}
(which amounts to stripping the leaf $m$ from the corresponding forest
and renumbering appropriately). Let $\res h m$ be the restriction of
$h$ to $[\res \phi m]$, and let $h\!\setminus\!m \colon \ab{\phi
  \!\setminus\! m} \to X$ take $i$ to $h(i)$ if $i < m$ and to
$h(i+1)$ if $i \geqslant m$. By induction, $\J(\res \phi m, \res h m)$
and $\J(\phi \!\setminus\! m, h \!\setminus\! m)$ are derivable
judgements, and it is easy to see that, up to $\alpha$-equivalence,
weakening $\J(\phi \!\setminus\! m, h \!\setminus\! m)$ with respect
to $\J(\res \phi m, \res h m)$ yields $\J(\phi, h)$. Thus this latter
judgement is derivable, and so the 
map~\eqref{eq:mappingweak} is indeed surjective.

This completes the proof on type judgements.  It remains to consider
term judgements. Any term judgement of the free $\{w\}$-\textsc{gat}
on $X$ is $\alpha$-equivalent to a unique one of the form
\begin{equation}\label{eq:jweaken2}
x_1 \ti T_1,\, \dots,\, x_{n-1} \ti T_{n-1} \vdash t : T_n\ty\rlap{ .}
\end{equation}
whose boundary is necessarily a type judgement $\J(\phi, h)$ of the
kind just described.  By induction on derivations, we may show that if
$T_n$ is $h(n)(x_{j_1}, \dots, x_{j_{k-1}})$ in~\eqref{eq:jweaken2},
then $t$ is $a(x_{j_1}, \dots, x_{j_{k-1}})$ for some $a \in \Tm X
{h(n)}$. We may thus identify $\Tm {WX} {\phi, h}$ with $\Tm X
{h(n)}$, as required.
\end{proof}

\begin{Rk}\label{rk:game2}
  The above description of the endofunctor $W$ reveals it to be
  identical in form to one arising in the analysis of innocent game
  semantics given in~\cite{Harmer2007Categorical}. Definition~10 of
  \emph{ibid}.~describes an exponential endofunctor $!$ (in fact a
  comonad) on a category $\cat{Gam}$ of games, whose meaning involves
  the addition of backtracking: thus, ``weakening = backtracking''.
  There are three main differences between our setting and that
  of~\cite{Harmer2007Categorical}, all relating to the category on
  which the endofunctor subsists.  Modulo these three differences, the
  endofunctors are completely identical in form.
  \begin{itemize}
  \item There are fewer objects in $\cat{Gam}$ than in $[\H^\op,
    \cat{Set}]$: games correspond to type-and-term structures without
    terms.\vskip0.5\baselineskip
  \item There are more morphisms in the category of games; such morphisms
    correspond to certain kinds of (decorated) relations, rather than to
    functions.\vskip0.5\baselineskip
  \item The category of games is \emph{polarised}, in that elements at
    even and odd degrees act with opposite variances, and the
    endofunctor $!$ adds backtracking only at odd degrees. One way of
    understanding this polarisation is to observe that the ``functional relations'' $X
    \to Y$ in $\cat{Gam}$ are not maps of $[\H^\op, \cat{Set}]$;
    rather, they correspond to diagrams
\begin{gather*}
 \vdots\\
\cd{
X(4) \ar[d]_\partial & \pushoutcorner[dl] \ar[l]^{} \bullet \ar[d]
\ar[r]_{f_4} & Y(4) \ar[d]_\partial \\
X(3) \ar[d]_\partial & \pushoutcorner \ar[l]^{f_3} \bullet \ar[d] \ar[r] & Y(3) \ar[d]_\partial \\
X(2) \ar[d]_\partial & \pushoutcorner[dl] \ar[l]^{} \bullet \ar[d] \ar[r]_{f_2} & Y(2) \ar[d]_\partial \\
X(1) & \ar[l]^{f_1} \bullet \ar@{=}[r] & Y(1)
}
\end{gather*}
with all marked squares pullbacks. The underlying cause of these
differences is as follows.  In type theory, a
context $(x\ti A\c y\ti B(x)\c z\ti C(x,y)\c w \ti D(x,y,z))$ is
thought of as $\Sigma x\ti A.\, \Sigma y\ti B(x).\, \Sigma z\ti
C(x,y).\, D(x,y,z)$. In game semantics, it would be interpreted as
$\Pi x\ti A.\, \Sigma y\ti B(x).\, \Pi z\ti C(x,y).\,
D(x,y,z)$.  \end{itemize}
\end{Rk}

\subsection{Unit and multiplication}
We now describe the  unit
 $\eta^W \colon 1 \Rightarrow W$ and the multiplication $\mu^W \colon WW
 \Rightarrow W$ of the weakening monad. The unit is quite straightforward.
 \begin{Defn}
   For any $n \geqslant 1$, we define $\gamma_n \in \mathrm{Hp}(n)$ by
   $\gamma_n(i) = 0$ if $i = 0$ and $\gamma_n(i) = i-1$ otherwise. For
   any $X \in [\H^\op, \cat{Set}]$ and $A \in X(n)$, we define $\tilde
   A \colon \ab{\gamma_n} \to X$ by $\tilde A(i) = \partial^{n-i}(A)$.
\end{Defn}
Note that $\hp_{\gamma_n}$ is just the usual linear ordering on $\{1,
\dots, n\}$. Moreover, $\ab{\gamma_n}$ is isomorphic to the
representable $\H(\thg, n)$, so that $\tilde A$ is simply the map
corresponding to $A \in X(n)$ under the Yoneda lemma.
 \begin{Prop}\label{prop:weakunit}
   For each $X \in [\H^\op, \cat{Set}]$, the unit map $\eta^W_X \colon X
   \to WX$ of the weakening monad has
   components
\begin{align*}
X(n) & \to WX(n) & \Tm X A & \to \Tm {WX} {\gamma_n, \tilde A}\\
A & \mapsto (\gamma_n, \tilde A) & a & \mapsto a\rlap{ .}
\end{align*}
\end{Prop}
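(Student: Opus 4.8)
The plan is to identify, for a given $A \in X(n)$, the unique $\alpha$-equivalence-class representative of the judgement $\eta_X(A)$ in the free $\{w\}$-\textsc{gat} on $X$, and then read off its associated heap-and-map pair via the correspondence $\theta$ of Proposition~\ref{prop:wkendo}. Recall from Proposition~\ref{prop:leftadjmonadic} that the unit $\eta_X \colon X \to VGX = WX$ sends $A \in X(n)$ to the equivalence class $[\J_A]$ of the basic judgement
\[
\J_A = x_1 \ti A_1,\, \dots,\, x_{n-1} \ti A_{n-1}(x_1, \dots, x_{n-2}) \vdash A(x_1, \dots, x_{n-1}) \ty\rlap{ ,}
\]
where $A_{n-1} = \partial A$, $A_{n-2} = \partial^2 A$, and so on, and it sends $a \in \Tm X A$ to $[\J_a]$. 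By Proposition~\ref{prop:commonanalysis}, type- and term-elements of $WX$ are just $\alpha$-equivalence classes of derivable judgements, and $\J_A$ is already in the normal form~\eqref{eq:jweaken} used in the proof of Proposition~\ref{prop:wkendo}, so no further normalisation is needed.

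The main step is then to compute $(\phi_{\J_A}, h_{\J_A})$. Following the recipe in that proof: the $i$-th type in the context of $\J_A$ is $A_i(x_1, \dots, x_{i-1}) = \partial^{n-i}(A)(x_1, \dots, x_{i-1})$, which is of the form $\partial^{n-i}(A)(x_{j_1}, \dots, x_{j_{k-1}})$ with $k = i$ and $(j_1, \dots, j_{k-1}) = (1, \dots, i-1)$; the final type is $A(x_1, \dots, x_{n-1})$, again with all variables $x_1, \dots, x_{n-1}$ present. Hence $h_{\J_A}(i) = \partial^{n-i}(A) = \tilde A(i)$ for each $i \in [n]$, and $\phi_{\J_A}(i) = i-1$ for $i > 0$ while $\phi_{\J_A}(0) = 0$ — that is, $\phi_{\J_A} = \gamma_n$. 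So $\eta^W_X(A) = \theta[\J_A] = (\gamma_n, \tilde A)$, as claimed. For the term components, $\Tm{WX}{\gamma_n,\tilde A}$ is identified in Proposition~\ref{prop:wkendo} with $\Tm{X}{h_{\J_A}(n)} = \Tm{X}{\tilde A(n)} = \Tm{X}{A}$, and under this identification the class $[\J_a]$ of $a \in \Tm X A$ corresponds to $a$ itself, since $\J_a$ has term-part $a(x_1, \dots, x_{n-1})$ with all variables present. This gives the second displayed component.

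There is really no obstacle here: the only thing to be careful about is that $\ab{\gamma_n}$ is the representable $\H(\thg, n)$ and $\tilde A$ is the Yoneda transpose of $A$ (noted just before the statement), so that one could alternatively phrase the whole argument as the observation that $\eta^W_X$ restricted to $X(n) = [\H^\op,\cat{Set}](\H(\thg,n),X)$ is exactly the coproduct coprojection at $\gamma_n \in \mathrm{Hp}(n)$ into $WX(n) = \sum_{\phi} [\H^\op,\cat{Set}](\ab\phi, X)$. That it is a map of presheaves — i.e.\ commutes with the boundary maps $\partial$ — follows from $\partial \gamma_n = \gamma_{n-1}$ and $\partial \tilde A = \widetilde{\partial A}$, both immediate from the definitions; and that it is the unit of the adjunction is exactly the content of Proposition~\ref{prop:leftadjmonadic}, which we may invoke. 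The verification is therefore a short unwinding of definitions.
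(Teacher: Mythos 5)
Your proposal is correct and follows essentially the same route as the paper: identify the unit of the adjunction with the assignment $A \mapsto [\J_A]$, $a \mapsto [\J_a]$ from Proposition~\ref{prop:leftadjmonadic}, and then read off $(\phi_{\J_A}, h_{\J_A}) = (\gamma_n, \tilde A)$ via the correspondence established in the proof of Proposition~\ref{prop:wkendo}. The paper simply compresses the explicit computation you spell out into the phrase ``direct examination of the proof of Proposition~\ref{prop:wkendo}'', so no further comment is needed.
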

\begin{proof}
The unit of the adjunction $G \dashv V$ at $X$ sends a type-element $A
\in X(n)$ to the equivalence class of the basic judgement 
\[x_1 \ti A_1,\, x_2 \ti A_2(x_1),\, \dots,\, x_{n-1} \ti
    A_{n-1}(x_1, \dots, x_{n-2})
    \vdash  A(x_1, \dots, x_{n-1}) \ty\rlap{ .}\]
Direct examination of the proof of Proposition~\ref{prop:wkendo} shows
that this element is $(\gamma_n, \tilde A) \in WX(n)$. We argue
similarly for term-elements.
\end{proof}

We now describe the multiplication of $W$, which will be slightly more
involved. First note that an element of $W^2X(n)$ is a pair $(\psi \in
\mathrm{Hp}(n), (\phi, h) \colon \ab \psi \to WX)$, where the second
component picks out pairs $(\phi_i \in
\mathrm{Hp}(\mathsf{dp}_\psi(i)), h_i \colon \ab{\phi_i} \to X)$ for
each $i \in [n]$ such that $(\partial \phi_i, \partial h_i) = (\phi_j,
h_j)$ whenever $\psi(i) = j$. For such a type-element we have $\Tm
{W^2X}{\psi, (\phi, h)} = \Tm {WX} {\phi_n, h_n} = \Tm {X}
{h_n(\mathsf{dp}_\psi(n))}$.

\begin{Defn}\label{def:weakoper}
  Given $(\psi, (\phi, h)) \in W^2X(n)$, we define the heap $\psi
  \star \phi \in \mathrm{Hp}(n)$ in relation form by
\begin{equation}\label{eq:relationstar}
i \hp_{\psi \star \phi} j \qquad \text{iff} \qquad i \hp_{\psi} j
\text{ and } \# i \hp_{\phi_j} \# j
\end{equation}
or in function form by $ (\psi \star \phi)(i) = \psi^{\# i- \phi_i(\#
  i)}(i)$; here, and elsewhere, we write $\# i$ as an abbreviation for
$\mathsf{dp}_\psi(i)$.  We define $\psi \star h \colon \ab{\psi \star
  \phi} \to X$ by $(\psi \star h)(i) = h_i(\# i)$. This is
well-defined by the observation that if $(\psi \star \phi)(i) = j$,
then $\# j = \phi_i(\#i)$, whence $\partial h_i(\# i) = h_i(\phi_i(\#
i)) = h_i(\# j) = h_j(\# j)$ (where the last equality holds since
$\partial h_i = h_j$) as required.
\end{Defn}

% Note that $\psi \star h$ is well-defined, since $$.

\begin{Ex}
Suppose that $\psi \in \mathrm{Hp}(6)$ and $\phi \colon \ab \psi \to W1$ are given by:
\[
\psi = \left(\cd[@R-1em@C-0.7em]{
& 6 \ar[d] \\
4 \ar[d] & 3 \ar[dr] & & 5 \ar[dl]\\
1 & & 2}\right) \quad
\phi  = \left( \cd[@R-1em@C-2.2em]{
& (3 \to 1 \leftarrow 2) \ar[d]_\partial \\
(1 \leftarrow  2) \ar[d]_\partial & (1 \leftarrow 2) \ar[dr]_\partial
& & (1\phantom{\leftarrow}\  2) \ar[dl]^\partial\\
(1) & & (1)}\right)
% \begin{gathered}
% \phi_1 = \phi_2 = (1) \\
% \phi_4 = \phi_5 = (1 2) \\
% \phi_3 = 
\]
then $\psi \star \phi$ is given by
\[
\psi \star \phi = \left(\cd[@R-1em@C-0.7em]{
& 6 \ar[ddr] \\
4 \ar[d] & 3 \ar[dr] & & 5 \\
1 & & 2}\right) \rlap{ .}
\]
\end{Ex}
% We will also need the following technical lemma.
% \begin{Lemma}
%   If $\phi \in \mathrm{Hp}(n)$ and $\phi \colon \ab \psi \to W1$,
%   then  $\mathsf{dp}_{\psi \star \phi}(i) =
%   \mathsf{dp}_{\phi_i}(\# i)$ for each $i \in [n]$, where $\# i$
%   denotes depth in $\psi$ as before.
% \end{Lemma}
% \begin{proof}
%   We write $\theta = \psi \star \phi$. Let $i \in [n]$; note that if
%   $\theta(i) = 0$, then $\psi^{\# i - \phi_i(\# i)} = 0$, whence
% %$\# i  - \phi_i(\# i )$ must be $\# i$, so that 
%   $\phi_i(\# i) = 0$ and $\mathsf{dp}_{\theta}(i) = 1 =
%   \mathsf{dp}_{\phi_i}(\# i)$ and we are done.  So without loss of
%   generality $\theta(i) > 0$. We proceed by induction on $i$. The base
%   case $i = 1$ is taken care of, since $\theta(1) = 0$. For $i > 1$,
%   the inductive hypothesis yields that
%    \[ \mathsf{dp}_{\theta}(i) =
%   \mathsf{dp}_\theta(\theta(i)) + 1 =
%   \mathsf{dp}_{\phi_{\theta(i)}}(\# \theta(i)) + 1\ .\] As $\theta(i)
%   = \psi^\ell(i)$ for some $\ell$, we have $ \phi_{\theta(i)}
%   = \partial^\ell(\phi_i)$ and so $\mathsf{dp}_{\phi_{\theta(i)}}(\#
%   \theta(i)) = \mathsf{dp}_{\phi_i}(\# \theta(i))$.  But $\# \theta(i)
%   = \# (\psi^{\# i - \phi_i(\# i)}(i)) = \# i - (\# i - \phi_i(\# i))
%   = \phi_i(\# i)$ so that finally \[ \mathsf{dp}_\theta(i) =
%   \mathsf{dp}_{\phi_{\theta(i)}}(\# \theta(i)) + 1 =
%   \mathsf{dp}_{\phi_i}(\phi_i(\# i)) + 1 = \mathsf{dp}_{\phi_i}(\#
%   i)\rlap{ .}\qedhere\]
% \end{proof}

% \begin{Defn}
%   Given $\psi \in \mathrm{Hp}(n)$ and $(\phi, h) \colon \ab \psi \to
%   X$,  This is well-defined by the
%   preceding lemma.
% \end{Defn}

 \begin{Prop}\label{prop:weakmult}
   For each $X \in [\H^\op, \cat{Set}]$, the multiplication $\mu^W_X
   \colon W^2X \to WX$ of the weakening monad has components
\begin{align*}
W^2X(n) & \to WX(n) & \Tm {W^2 X} {\psi, (\phi, h)} & \to \Tm {WX}
{\psi \star \phi, \psi \star h}\\
(\psi, (\phi, h)) & \mapsto (\psi \star \phi, \psi \star h) & a
 & \mapsto a\rlap{ .}
\end{align*}
\end{Prop}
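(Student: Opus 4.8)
The plan is to compute the multiplication via the counit of the adjunction $G \dashv V$. Writing $\epsilon$ for this counit, we have $\mu^W_X = V(\epsilon_{GX})$ for the component $\epsilon_{GX}\colon G(WX)\to GX$; and by Proposition~\ref{prop:leftadjmonadic}, $\epsilon_{GX}$ is the interpretation which sends the basic judgement of $G(WX)$ attached to a type-element $(\phi',h')\in WX(m)$ to the canonical representative $\J(\phi',h')$ of that type-element built in the proof of Proposition~\ref{prop:wkendo}, acting analogously on term-constants. Since, by Propositions~\ref{prop:commonanalysis} and~\ref{prop:wkendo}, a type-element of $W^2X(n)=VG(WX)(n)$ is an $\alpha$-equivalence class of degree-$n$ type judgements of $G(WX)$, and since $\mu^W_X$ acts by $[\J]\mapsto[\epsilon_{GX}(\J)]$, the claim for type-elements reduces to showing
\[
\epsilon_{GX}\bigl(\J(\psi,(\phi,h))\bigr)=\J(\psi\star\phi,\psi\star h)
\]
up to $\alpha$-equivalence, where on the left $\J(\psi,(\phi,h))$ is the canonical representative of $(\psi,(\phi,h))$ in $G(WX)$, obtained by applying the construction of the proof of Proposition~\ref{prop:wkendo} to the type-and-term structure $WX$.

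I would establish this by induction on $n$, following the inductive derivation of $\J(\psi,(\phi,h))$ given in the proof of Proposition~\ref{prop:wkendo}. In the base case $\psi=\gamma_n$, the judgement $\J(\gamma_n,(\phi,h))$ is precisely the basic judgement of $G(WX)$ attached to the type-element $(\phi_n,h_n)\in WX(n)$, so $\epsilon_{GX}$ sends it to $\J(\phi_n,h_n)$; and one checks directly from the defining formulas---using $\mathsf{dp}_{\gamma_n}(i)=i$ and the identity $(\phi_i,h_i)=\partial^{n-i}(\phi_n,h_n)$---that $\gamma_n\star\phi=\phi_n$ and $\gamma_n\star h=h_n$. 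For the inductive step I would choose a non-maximal leaf $m$ of $\psi$ as in Proposition~\ref{prop:wkendo}, so that $\J(\psi,(\phi,h))$ is, up to $\alpha$-equivalence, the weakening of $\J(\psi\!\setminus\!m,(\phi\!\setminus\!m,h\!\setminus\!m))$ with respect to $\J(\res\psi m,(\res\phi m,\res h m))$. Because the interpretation $\epsilon_{GX}$ preserves weakening and $\alpha$-conversion, applying it and invoking the inductive hypothesis presents $\epsilon_{GX}(\J(\psi,(\phi,h)))$ as the weakening of $\J((\psi\!\setminus\!m)\star(\phi\!\setminus\!m),(\psi\!\setminus\!m)\star(h\!\setminus\!m))$ with respect to $\J(\res\psi m\star\res\phi m,\res\psi m\star\res h m)$. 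It then remains to verify the purely combinatorial facts that $m$ is again a non-maximal leaf of $\psi\star\phi$, that $(\psi\star\phi)\!\setminus\!m=(\psi\!\setminus\!m)\star(\phi\!\setminus\!m)$ and $\res{(\psi\star\phi)}{m}=\res\psi m\star\res\phi m$, and that the corresponding decorations match; granting these, the surjectivity argument of Proposition~\ref{prop:wkendo} recognises the weakened judgement as $\J(\psi\star\phi,\psi\star h)$, closing the induction.

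The reason the formula has the shape claimed is that applying $\epsilon_{GX}$ ``expands'' each type-constant $(\phi_i,h_i)$ of $WX$ occurring in $\J(\psi,(\phi,h))$ into the context-and-type $\J(\phi_i,h_i)$: the main type of this expansion is $h_i(\mathsf{dp}_\psi(i))(\cdots)$, which accounts for the decoration $(\psi\star h)(i)=h_i(\mathsf{dp}_\psi(i))$, while the dependency of position $i$ on its predecessors is obtained by descending one step along $\phi_i$ from $\mathsf{dp}_\psi(i)$ and re-expressing the result in the ambient $\psi$-context, giving exactly $(\psi\star\phi)(i)=\psi^{\mathsf{dp}_\psi(i)-\phi_i(\mathsf{dp}_\psi(i))}(i)$, as in~\eqref{eq:relationstar}. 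For the term-elements, the identifications of Proposition~\ref{prop:wkendo} give $\Tm{W^2X}{\psi,(\phi,h)}=\Tm{WX}{\phi_n,h_n}=\Tm X {h_n(\mathsf{dp}_\psi(n))}=\Tm{WX}{\psi\star\phi,\psi\star h}$ on the nose, and the description of $\epsilon_{GX}$ on term-constants shows $\mu^W_X$ to be the identity map between these sets. I expect the main obstacle to be the bookkeeping in the inductive step: checking that leaf-removal commutes with $\star$ after the requisite index shifts, and keeping straight the three depth-functions $\mathsf{dp}_\psi$, $\mathsf{dp}_{\phi_i}$ and $\mathsf{dp}_{\psi\star\phi}$ together with the nested variable substitutions. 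This is routine but fiddly, and is cleanest when carried out with the relational (partial-order) presentation of min-heaps, for which~\eqref{eq:relationstar} and its compatibility with leaf-removal become immediate.
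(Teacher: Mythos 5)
Your proposal is correct and follows essentially the same route as the paper's own proof: reduce $\mu^W_X$ to $V\epsilon_{GX}$ and the fact that interpretations preserve weakening and $\alpha$-conversion, handle the case $\psi=\gamma_n$ via the unit/basic-judgement identification, and induct on $n$ by stripping a non-maximal leaf $m$, using that $\star$ commutes with restriction and leaf-removal (the paper likewise leaves these combinatorial compatibilities, and the term-element case, as a direct calculation). The only cosmetic difference is that you treat the base case through the explicit description of the counit rather than the monad unit law, which amounts to the same thing.
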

% Note that the action on terms is well-defined since both domain and
% codomain are equal to $\Tm X  {h_n(\# n)}$.
\begin{proof}
  Let $(\psi, \ell) = (\psi, (\phi, h)) \in W^2X(n)$. Note that if $\psi
  = \gamma_n$, then we have $\mu^W_X(\gamma_n, (\phi, h)) =
  \mu^W_X(\eta^W_{WX}(\phi_n, h_n)) = (\phi_n, h_n)$, and also by direct
  calculation that $(\gamma_n \star \phi, \gamma_n \star h) = (\phi_n,
  h_n)$. So without loss of generality we
  may assume $\psi \neq \gamma_n$. We proceed by induction on $n$. The
  base case $n = 1$ is clear, since the only $\psi \in \mathrm{Hp}(1)$
  is $\gamma_1$. So assume $n>1$ and $\psi \neq \gamma_n$. Then there
  exists some $m < n$ not in the image of $\psi$, and as in the proof
  of Proposition~\ref{prop:wkendo}, we form $(\res \psi m, \res \ell m)
  \in W^2X(m)$ and $(\psi \!\setminus\! m, \ell \!\setminus\! m) \in
  W^2X(n-1)$. By induction and a direct
  calculation, we have that
  \begin{gather*}
    \mu^W_X(\res \psi m, \res \ell m) = (\res \psi m
\star \res \phi m,\, \res \psi m \star \res h m)
=(\res{(\psi \star \phi)}{m}, \res{(\psi \star
  h)}{m})\\
\text{and }
\mu^W_X(\psi \!\setminus\! m, \ell
\!\setminus\! m) = (\psi
\!\setminus\! m \star \phi \!\setminus\! m,\, \psi \!\setminus\! m 
\star h \!\setminus\! m) = ((\psi \star \phi) \!\setminus\! m, (\psi \star h)
\!\setminus\! m)\rlap{ .}
\end{gather*}

Now the judgement $\J(\psi, \ell)$ is derivable by weakening $\J(\psi
\!\setminus\!  m, \ell \!\setminus\! m)$ with respect to $\J(\res \psi
m, \res \ell m)$ and $\alpha$-converting. Since $\mu^W_X$ is the image
under the forgetful functor $V$ of the interpretation $\epsilon_{GX}
\colon GVGX \to GX$, and interpretations preserve derivations, it
follows that the judgement represented by $\mu^W_X(\psi, \ell)$ may be
derived by weakening $\J((\psi \star \phi) \!\setminus\! m, (\psi
\star h) \!\setminus\! m)$ with respect to $\J(\res{(\psi \star
  \phi)}{m}, \res{(\psi \star h)}{m})$ and $\alpha$-converting. But
the judgement so obtained is easily seen to be $\J(\psi \star \phi,
\psi \star h)$, so that finally $\mu^W_X(\psi, \ell) = (\psi \star
\phi, \psi \star h)$ as required. 

 This completes the argument for
type-elements. That  for terms is similar; the key point is that if
$a \in \Tm {W^2X}{\psi, \ell}$ with $\psi \neq
\gamma_n$, then on taking $m < n$ with $m \notin
\im \psi$ and forming $(\res \psi m, \res \ell m) \in
W^2X(m)$ and $(\psi \! \setminus \!  m, \ell \! \setminus\!  m) \in
W^2X(n-1)$, we now have $a \in \Tm{W^2X}{\psi \! \setminus \!  m, \ell \!
  \setminus\!  m}$. Weakening $\J(\psi
\! \setminus \!  m, \ell \!  \setminus\!  m, a)$ with respect to 
$\J(\res \psi m, \res \ell m)$ yields
back $\J(\psi, \ell, a)$, and we conclude the argument as before
using induction and the preservation of derivations by $\mu^W_X$.
% Let $(\psi, k) = (\psi, (\phi, h))$ be an element of $W^2X(n)$,
  % representing as in the proof of Proposition~\ref{prop:wkendo} a
  % judgement $\J_{\psi, k}$ of $GWX$ of the form~\eqref{eq:jweaken}. If
  % $\J_1, \dots, \J_n$ are the basic type judgements of $GWX$
  % corresponding to the type-elements $k(1), \dots, k(n)$ of $WX$, then
  % $\J_{\psi, k}$ is derivable from the $\J_i$'s; let $\D$ be a
  % derivation-tree witnessing this. As each $k(i) = (\phi_i, h_i)$ is a
  % type-element of $WX$, it corresponds in turn to a derivable
  % judgement $\J_{\phi_i, k_i}$ of $GX$, and the element $\mu_X(\psi,
  % k) \in WX(n)$ now represents the type judgement at the conclusion of
  % the derivation-tree $\D' = \D[\J_{\phi_1, k_1}, \dots, \J_{\phi_n,
  %   k_n} / \J_1, \dots, \J_n]$.
    % With this description in hand, we now prove that $\mu_X(\psi, (\phi,
  % h)) = (\psi \star \phi, \psi \star h)$. 
\end{proof}

\begin{Rk}\label{rk:game3}
  Again, we may link our monad $W$ to the exponential comonad $!$
  of~\cite{Harmer2007Categorical}. The obvious difference is that one
  is a monad and the other a comonad. But this is easily accounted for
  due to the polarisation present in the category of games: the
  comonad $!$ corresponds to adding backtracking at ``contravariant''
  odd degrees (there is a corresponding monad $?$ which adds
  backtracking at even degrees). The issue of polarity aside, the
  comonad structure of $!$ corresponds exactly to the monad structure
  of $W$ described above.
\end{Rk}

\section{Weakening and projection}\label{sec:project}
We now turn to the additional structure imposed on a type-and-term
structure by the projection rule. As we have already explained, the
projection rule is not well-behaved in the absence of the weakening
rule. Consequently, in this section, we will seek to characterise the
\emph{weakening-and-projection monad} $P$ induced by the
free-forgetful adjunction $\{w,p\}\text-\cat{GAT} \leftrightarrows
[\H^\op, \cat{Set}]$.

\subsection{Underlying endofunctor}
As before, we begin by describing the underlying endofunctor of the
weakening-and-projection monad. 
% The key observation is that the free
% $\{w,p\}$-\textsc{gat} on a term-and-type-structure $X$ extends the
% corresponding $\{w\}$-\textsc{gat} by the addition of new
% term-judgements of the form
% \[
% \J = \Gamma, y : T, \Delta \vdash y : T\rlap{ .}
% \]
% Each such judgement is completely determined by the type judgement
% $\partial^2(\J)$ encoding the context of $\J$, together with a natural
% number $i$ indicating which element of that context is to be projected
% onto.  Note that $\partial(\J)$ is then the type-judgement obtained by
% extending $\partial^2(\J)$ by a further copy of the type at the $i$th
% position. The following definition makes this precise.
\newcommand{\wk}{\mathsf{wk}}
% \begin{Defn}
% Given $\phi \in \mathrm{Hp}(n)$ and $i \in [n]$, we
% define $\wk_i \phi \in \mathrm{Hp}(n+1)$ by $\res{(\wk_i\phi)}{[0,n]} = \phi$ and $(\wk_i\phi)(n+1) = \phi(i)$. Given $h
% \colon \ab \phi \to X$, we define $\wk_i h \colon \ab{\wk_i \phi} \to
% X$ by $\res{(\wk_i h)}{[0,n]} = h$ and $(\wk_i h)(n+1) = h(i)$.

% % For each $n \geqslant 1$ and $X \in [\H^\op, \cat{Set}]$, we define
% % the function
% % \begin{align*}
% % \wk_i \colon n \times WX(n) & \to WX(n+1) \\
% % (i , (\phi, h )) & \mapsto (\wk_i \phi, \wk_i h)
% % \end{align*}
% % where
% % %  Given $\phi \in \mathrm{Hp}(n)$ and $1 \leqslant i \leqslant n$, we
% %   $\wk_i \phi \in \mathrm{Hp}(n+1)$ is defined by $\partial(\wk_i \phi) =
% %   \phi$ and $(\wk_i \phi)(n+1) = \phi(i)$,
% % %Given $h \colon \ab \phi  \to X$ in $[\H^\op, \cat{Set}]$, we 
% %   and $\wk_i h \colon \ab{\wk_i \phi} \to X$ is defined by
% %   $\partial(\wk_i h) = h$ and $(\wk_i h)(n+1) = h(i)$.
% \end{Defn}
\begin{Prop}\label{prop:projendo}
  The value at $X \in [\H^\op, \cat{Set}]$ of the underlying
  endofunctor $P$ of the weakening-and-projection monad agrees with
  $W$ on type-elements, and on term-elements is given by
\begin{align*}
  \Tm{PX}{\phi, h} = \Tm {X} {h(n)} + \{ \pi_i : i \in
  [n-1]\text{, }
  \phi(n) = \phi(i)\text{, } h(n) = h(i)\}\rlap{ .}
\end{align*}
%for each $(\phi \in \mathrm{Hp}(n), h \colon \ab \phi \to X) \in PX(n)$.
%In particular, $W$ is parametric right adjoint and finitary.
\end{Prop}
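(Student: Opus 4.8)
The plan is to follow the proof of Proposition~\ref{prop:wkendo} closely, with Propositions~\ref{prop:commonanalysis} and~\ref{prop:wkendo} as the chief inputs, since adjoining the projection rule to the weakening rule changes only the term-elements. First I would check that $P$ agrees with $W$ on type-elements and boundary maps. By Proposition~\ref{prop:commonanalysis}, a type-element of $PX(n)$ is an $\alpha$-class of a derivable type judgement of the free $\{w,p\}$-\textsc{gat} on $X$, so it is enough to see that the derivable type judgements of this theory coincide with those of the free $\{w\}$-\textsc{gat} on $X$; the claimed formula then follows from Proposition~\ref{prop:wkendo}. This goes by induction on derivations: the projection rule yields a term judgement, and inspection of Table~\ref{fig1} shows that no rule feeds a term judgement into the derivation of a type judgement, nor does the boundary of a type judgement mention term judgements; while the equality and $\alpha$ rules produce nothing new on type judgements, since by Proposition~\ref{prop:commonanalysis} the only derivable type equalities are reflexivities.

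Now fix $(\phi,h)\in WX(n) = PX(n)$. By Proposition~\ref{prop:commonanalysis}, $\Tm{PX}{\phi,h}$ is the set of $\alpha$-classes of derivable term judgements of the free $\{w,p\}$-\textsc{gat} on $X$ with boundary $\J(\phi,h)$, and each such class has a unique representative of the form~\eqref{eq:jweaken2} with the $T_i$ as in Proposition~\ref{prop:wkendo}. By induction on derivations I would prove that the term $t$ of such a representative is either $a(x_{j_1},\dots,x_{j_{k-1}})$ for a unique $a\in\Tm X{h(n)}$, where $k = \mathsf{dp}_\phi(n)$ and $j_\ell = \phi^{k-\ell}(n)$, or else a variable $x_i$ with $i\in[n-1]$ and $T_i = T_n$: the basic judgement $\J_a$ gives the first case, the projection rule introduces a variable with conclusion type the last context type, and the weakening, $\alpha$- and equality rules all preserve this form (the equality rules acting trivially on term judgements, by Proposition~\ref{prop:commonanalysis}). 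Comparing the canonical forms of $T_i$ and $T_n$ furnished by Proposition~\ref{prop:wkendo}, one sees that $T_i = T_n$ holds exactly when $\phi(i) = \phi(n)$ and $h(i) = h(n)$: these conditions force $i$ and $n$ to the same depth, hence to identical argument lists, leaving only the heads to be matched. Writing $\pi_i$ for the $\alpha$-class of the representative with $t = x_i$, distinct elements on the right-hand side of the claimed formula have syntactically distinct normal forms, so this yields an injection $\Tm{PX}{\phi,h}\hookrightarrow \Tm X{h(n)} + \{\pi_i : i\in[n-1],\ \phi(n) = \phi(i),\ h(n) = h(i)\}$.

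It remains to establish surjectivity. Judgements with $t = a(x_{j_1},\dots,x_{j_{k-1}})$ are already derivable in the free $\{w\}$-\textsc{gat} on $X$, by Proposition~\ref{prop:wkendo}. For $t = x_i$ with $\phi(i) = \phi(n)$ and $h(i) = h(n)$: restricting $(\phi,h)$ along the downset $\dn i = \{j : j\hp_\phi i\}$ gives, by Proposition~\ref{prop:wkendo}, a derivable type judgement with conclusion $T_i$; the projection rule turns it into a derivable term judgement with term $x_i$; and weakening this up to the min-heap context of $\J(\phi,h)$---one leaf at a time, exactly as in the proof of Proposition~\ref{prop:wkendo}---produces, up to $\alpha$-equivalence, the desired representative. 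The assertion on term-elements is the same computation carried out fibrewise over $(\phi,h)$. The step I expect to be the main obstacle is this last context enlargement: one must sequence the weakening steps so that the linearly-ordered context $\dn i$ grows correctly into the min-heap context of $\J(\phi,h)$, which is handled by the ``strip a leaf and induct'' device already in place for Proposition~\ref{prop:wkendo}.
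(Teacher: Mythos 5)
Your proposal is correct and follows the same route as the paper: both reduce to Proposition~\ref{prop:commonanalysis}, observe that the projection rule contributes nothing new at the level of type judgements, and identify the extra term judgements as the projections $x_i : T_i$ with $T_n = T_i$, i.e.\ $\phi(n)=\phi(i)$ and $h(n)=h(i)$. The paper simply states these facts as ``visible'' and leaves the inductions on derivations and the leaf-stripping derivability argument implicit, whereas you spell them out; the substance is the same.
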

Note that the left-hand summand above is $\Tm{WX}{\phi, h}$, so that
$PX$ is simply the extension of $WX$ by the addition of new
term-elements representing projections. We write
$\theta_X \colon WX \to PX$ for the evident inclusion maps.
%Recall that a functor is called \emph{finitary} if it preserves
%filtered colimits.
\begin{proof}
  By Proposition~\ref{prop:commonanalysis}, elements
  of $PX$ are $\alpha$-equivalence classes of type or term judgements
  of the free $\{w,p\}$-\textsc{gat} on $X$. The type judgements are
  visibly the same as those of the free $\{w\}$-\textsc{gat}, while
  the term judgements augment those of the free $\{w\}$-\textsc{gat}
  with ones of the form\begin{equation}\label{eq:jproject}
    \J(\phi, h, \pi_i) = x_1 \ti T_1,\, \dots,\, x_{n-1} \ti T_{n-1}
    \vdash x_i : T_{i}
\end{equation}
for $i \in [n-1]$ such that $T_n = T_i$; i.e., such that $\phi(n)
= \phi(i)$ and $h(n) = h(i)$. This accounts for the right-hand summand
in $\Tm {PX}{\phi, h}$. 
\end{proof}

\subsection{Unit and multiplication}
We now describe the unit $\eta^P \colon 1 \Rightarrow P$ and the
multiplication $\mu^P \colon PP \Rightarrow P$ of the
weakening-and-projection monad. 

\begin{Prop}\label{prop:projunit}
  For each $X \in [\H^\op, \cat{Set}]$, the unit $\eta^P_X \colon X
  \to PX$ of the weakening-and-projection monad is the composite
  $\theta_X \circ \eta^W_X \colon X \to WX \to PX$.
\end{Prop}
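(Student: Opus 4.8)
The plan is to reduce the statement to the explicit description of the unit of the free--forgetful adjunction $G_{\{w,p\}} \dashv V_{\{w,p\}}$ recorded in the proof of Proposition~\ref{prop:leftadjmonadic}, combined with the descriptions of $W$, of $\eta^W$, and of $P$ already in hand. Recall from that proof that, for \emph{any} decent $D$, the unit $\eta_X \colon X \to V_DG_DX$ sends a type-element $A \in X(n)$ to the class $[\J_A]$ of the basic judgement $x_1 \ti A_1,\, \dots,\, x_{n-1} \ti A_{n-1}(x_1,\dots,x_{n-2}) \vdash A(x_1,\dots,x_{n-1})\ty$, and a term-element $a \in \Tm X A$ to the class $[\J_a]$ of the corresponding basic term judgement; the point I would stress at the outset is that these basic judgements, and hence the formula for the unit on judgements, are independent of $D$. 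So $\eta^P_X$ and $\eta^W_X$ ``are the same map of judgement-classes''; what has to be checked is only that this map, read into the presheaf $PX$, factors through $\theta_X$ as claimed.

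For this I would use the remark following the definition of $\Tm{}{}$: two maps of presheaves $X \to PX$ coincide as soon as they agree on type-elements $X(n) \to PX(n)$ and on term-elements $\Tm X A \to \Tm{PX}{-}$. On type-elements, Proposition~\ref{prop:projendo} identifies $PX$ with $WX$ via precisely the correspondence $\theta$ built in the proof of Proposition~\ref{prop:wkendo}, and $\theta_X$ is the identity there; since the proof of Proposition~\ref{prop:weakunit} already computes $\theta([\J_A]) = (\gamma_n, \tilde A)$, we get $\eta^P_X(A) = (\gamma_n, \tilde A) = \theta_X(\eta^W_X(A))$ with no further work. On term-elements, the basic judgement $\J_a$ has type component $A = \tilde A(n)$, so under the identification $\Tm{WX}{\gamma_n, \tilde A} \cong \Tm X A$ from the proof of Proposition~\ref{prop:wkendo} it is classified by $a$ itself; since $\Tm{PX}{\gamma_n, \tilde A} = \Tm{WX}{\gamma_n, \tilde A} + \{\pi_i : \dots\}$ and $\theta_X$ is the left-hand coproduct injection, $\eta^P_X(a)$ sits in the $\Tm{WX}{}$-summand as $a$, i.e.\ equals $\theta_X(\eta^W_X(a))$.

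I do not expect a genuine obstacle here: the argument is pure bookkeeping. The one place to be careful---and which I would make explicit rather than leave implicit---is that the identification of $PX$ with $WX$ on type-elements underlying Proposition~\ref{prop:projendo} is literally the map $\theta$ of the proof of Proposition~\ref{prop:wkendo}, so that the computation $[\J_A] \mapsto (\gamma_n, \tilde A)$ can be quoted verbatim; without naming this, the chain of identifications looks more delicate than it is.
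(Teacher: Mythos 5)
Your argument is correct, and it is essentially the paper's: the paper simply declares this proposition ``Clear,'' and your write-up is exactly the bookkeeping---the unit of $G_D \dashv V_D$ sends elements to classes of basic judgements independently of $D$, and these land in the $\Tm{WX}{}$-summand of $PX$ via the identifications of Propositions~\ref{prop:wkendo}, \ref{prop:weakunit} and~\ref{prop:projendo}---that justifies that one-word proof. Your explicit remark that the identification of $PX$ with $WX$ on type-elements is literally the map $\theta$ of Proposition~\ref{prop:wkendo} is a sensible precaution but introduces nothing beyond what the paper takes as read.
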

 \begin{proof}
  Clear.
\end{proof}
We now turn to the multiplication, for which we will need to identify
term-elements of $P^2X$ over a type-element $(\psi, (\phi, h)) \in
P^2X(n) = W^2X(n)$. By definition $\Tm{P^2X}{\psi, (\phi, n)}$ is the
set
\begin{align*}
  \Tm{PX}{\phi_n, h_n} + \{\pi_i(\psi, (\phi, h)) \mid i \in [n-1],
  \psi(n) = \psi(i), \phi_n = \phi_i, h_n = h_i\}
\end{align*}
where we annotate the projections in the second factor to distinguish
them from those appearing in the further decomposition of
$\Tm{PX}{\phi_n, h_n}$ as
\begin{align*}
\Tm{X}{h_n(\# n)} + \{\pi_i(\phi_n, h_n) \mid i \in [\# n -1], \phi_n(\# n)
= \phi_n(i), h_n(\# n) = h_n(i)\}\rlap{ .}
\end{align*}

\begin{Prop}\label{prop:projmult}
  For each $X \in [\H^\op, \cat{Set}]$, the multiplication $\mu^P_X
  \colon P^2X \to PX$ of the weakening-and-projection monad agrees
  with that of $W$ on type-elements, and on term-elements is defined
  at $(\psi, (\phi, n)) \in P^2X(n)$ to be the mapping $\Tm {P^2X}
  {\psi, (\phi, h)} \to \Tm {PX} {\psi \star \phi, \psi \star h}$
  given by
\begin{gather*}
a \mapsto  \begin{cases}
a & \text{if $a \in \Tm X {h_n(\# n)}$;}\\
\pi_i & \text{if $a = \pi_i(\psi, (\phi, h))$;}\\
\pi_{\psi^{\# n - i}(n)} & \text{if $a =
  \pi_i(\phi_n, h_n)$.}
\end{cases}
% a & \mapsto a \qquad{($a \in \Tm X {\phi_n(\# n)}$)} \\
% \pi_i & \mapsto \pi_i \qquad \text{($i \in [n-1], \phi_n = \phi_i,
%   h_n = h_i$)} \\
% \pi'_i & \mapsto \pi_{\# n - i} \quad \text{($i \in [\# n-1], \phi_n(\# n)
% = \phi_n(i), h_n(\# n) = h_n(i)$).}
\end{gather*}
% a
% \mu_X^P(\psi, \ell)  & = 
% \mu_X^W(\psi, \ell) \\
% \mu_X^P(\pi_i(\psi, \ell)) & =
% \pi_i(\mu_X^W(\psi, \ell)) \\
% \mu_X^P(\psi, \ell \cdot \pi_i)  &=
% \pi_{\# n - i}(\mu_X^W(\partial\psi, \partial \ell))
% % P^2X(n_t) & \to PX( n_t)\\
% % x \in W^2X(n_t) & \mapsto \mu_X(x) \in WX(n_t)\\
% % (i, x) \in (n-1) \times W^2X(n-1) & \mapsto (i, \mu_X(x))\\
% %  & \mapsto (\psi^{\# i - j}(i), \mu(\psi, k))
% \end{align*}
% on elements of the forms~\eqref{eq:proj2}, \eqref{eq:proj1}
% and~\eqref{eq:proj3} respectively.
% %on the three  summands of the preceding Lemma.
\end{Prop}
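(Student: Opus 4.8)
The plan is to mimic the proof of Proposition~\ref{prop:weakmult}: recall that $\mu^P_X$ is the image under the forgetful functor $V$ of the counit $\epsilon_{GX}\colon GPX\to GX$, and that interpretations preserve derivations. On type-elements there is nothing left to do, since $P$ agrees with $W$ there and the claimed formula is that of Proposition~\ref{prop:weakmult}. So the whole task reduces to the following: for a term-element $a\in\Tm{P^2X}{\psi,(\phi,h)}$, identify the judgement $\J(\psi,(\phi,h),a)$ of the free $\{w,p\}$-\textsc{gat} on $PX$ that $a$ represents, compute its image under the interpretation $\epsilon_{GX}$, and read off the resulting term-element of $PX$ over $(\psi\star\phi,\psi\star h)$. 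The term-elements fall into the three families in the statement — honest terms $a\in\Tm X{h_n(\# n)}$, outer projections $\pi_i(\psi,(\phi,h))$, and inner projections $\pi_i(\phi_n,h_n)$ — and I would treat each in turn.

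If $a\in\Tm X{h_n(\# n)}$, then $a$ is also a term-constant of $GPX$, and $\J(\psi,(\phi,h),a)$ is obtained from the basic judgement $\J_a$ of $GPX$ by the sequence of weakenings and $\alpha$-conversions used in the proof of Proposition~\ref{prop:wkendo}. Since $\epsilon_{GX}$ sends $\J_a$ to the derived term judgement of $GX$ that $a$ names, and preserves derivations, $\epsilon_{GX}(\J(\psi,(\phi,h),a))$ is the corresponding weakening of that judgement; it represents a term-element of $PX$ over $(\psi\star\phi,\psi\star h)$ whose type part is $(\psi\star h)(n)=h_n(\# n)$, so that term-element is $a$ again. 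If $a=\pi_i(\psi,(\phi,h))$, then by the analogue of~\eqref{eq:jproject} for $PX$ the judgement $\J(\psi,(\phi,h),a)$ is a weakened projection judgement of $GPX$ with subject $x_i$; as $\epsilon_{GX}$ preserves the weakening and projection rules, its image is a weakened projection judgement of $GX$ with subject $x_i$, representing $\pi_i\in\Tm{PX}{\psi\star\phi,\psi\star h}$, whose side conditions $(\psi\star\phi)(n)=(\psi\star\phi)(i)$ and $(\psi\star h)(n)=(\psi\star h)(i)$ follow from $\psi(n)=\psi(i)$, $\phi_n=\phi_i$, $h_n=h_i$ by inspection of the formulas in Definition~\ref{def:weakoper}.

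The remaining case, the inner projection $a=\pi_i(\phi_n,h_n)$ sitting inside the left summand $\Tm{PX}{\phi_n,h_n}$, is where the real bookkeeping lies, and I expect it to be the main obstacle. As a term-constant of $GPX$, $a$ names the weakened projection judgement $\J(\phi_n,h_n,\pi_i)$ of $GX$, whose subject is the $i$-th context variable; and $\J(\psi,(\phi,h),a)$ is got from the basic judgement $\J_a$ of $GPX$ by the weakenings and $\alpha$-renamings of Proposition~\ref{prop:wkendo}, under which the $\ell$-th context variable of $\J_a$ becomes $x_{\psi^{\# n-\ell}(n)}$. Applying $\epsilon_{GX}$ and preservation of derivations, $\epsilon_{GX}(\J(\psi,(\phi,h),a))$ is therefore a weakened projection judgement of $GX$ with subject $x_{\psi^{\# n-i}(n)}$, i.e.\ it represents $\pi_{\psi^{\# n-i}(n)}$. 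To conclude one checks that $j:=\psi^{\# n-i}(n)$ lies in $[n-1]$ and satisfies $(\psi\star\phi)(n)=(\psi\star\phi)(j)$ and $(\psi\star h)(n)=(\psi\star h)(j)$; this is an elementary computation from $\# j=i$, $\phi_j=\res{\phi_n}{[0,i]}$, $h_j$ the corresponding restriction of $h_n$, and the hypotheses $\phi_n(\# n)=\phi_n(i)$, $h_n(\# n)=h_n(i)$. Apart from this variable-renumbering, the argument is a routine elaboration of the one for $W$; and if one prefers, the appeals to the explicit weakening derivations can be replaced by an induction on $n$ by leaf-stripping, exactly parallel to the proof of Proposition~\ref{prop:weakmult}, with the same bookkeeping.
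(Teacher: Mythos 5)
Your proposal is correct and takes essentially the same route as the paper: both arguments rest on the fact that $\mu^P_X$ is the image under $V$ of the counit interpretation $\epsilon_{GX}$, which preserves derivations, with the three summands of $\Tm{P^2X}{\psi,(\phi,h)}$ treated separately and the side conditions for the resulting projection terms verified by direct computation with $\psi\star\phi$ and $\psi\star h$. The only organizational difference is the case $a=\pi_i(\phi_n,h_n)$: the paper runs an induction on $n$ (base case via the unit law, index bookkeeping through~\eqref{eq:phiminus}), whereas you read the subject variable $x_{\psi^{\# n-i}(n)}$ off from the explicit form of $\J(\psi,(\phi,h),a)$ supplied by Proposition~\ref{prop:wkendo} --- a compression whose full justification is precisely the leaf-stripping induction you correctly keep in reserve, so nothing essential is missing.
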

\begin{proof}
  The assertion concerning type-elements is clear. As for
  term-elements, an element of $\Tm {P^2X}{\psi, (\phi, h)}$ that lies
  in $\Tm{X}{h_n(\# n)} = \Tm {W^2X}{\psi, (\phi, h)}$ represents a
  term judgement derived without the use of projection, so that the
  action of the multiplication is inherited from $W$.  For an element
  of the form $\pi_i(\psi, (\phi, h))$, the judgement of the free
  $\{w,p\}$-\textsc{gat} on $WX$ which it represents is derivable from
  the judgement representing $(\psi, (\partial \phi, \partial h))$ by
  a single application of the projection rule. As in the proof of
  Proposition~\ref{prop:weakmult}, it follows that the judgement
  represented by $\mu^P_X(\pi_i(\psi, (\phi, h)))$ is derivable from
  that representing $\mu^P_X(\psi, (\partial \phi, \partial h)) =
  (\partial(\psi \star \phi), \partial(\psi \star h))$ by applying the
  same instance of the projection rule; it follows that
  $\mu^P_X(\pi_i(\psi, (\phi, h))) = \pi_i$ as required.

  Finally consider an element $\pi_i(\phi_n, h_n) \in \Tm {P^2X}{\psi,
    (\phi, h)}$.  If $\psi = \gamma_n$, then $(\psi, (\phi, h)) =
  \eta^P_{PX}(\phi_n, h_n)$ and the given term-element is the image
  under $\eta^P_{PX}$ of $\pi_i \in \Tm {PX} {\phi_n, h_n}$. It
  follows that applying $\mu^P_X$ yields back $\pi_i =
  \pi_{\gamma_n^{\# n - i}(n)}$, as required.  For the case $\psi \neq
  \gamma_n$ we now proceed by induction on $n$; in what follows we
  abbreviate $\ell = (\phi, h)$. The base case $n = 1$ is trivial as
  then necessarily $\psi = \gamma_1$. So assume $n>1$ and $\psi \neq
  \gamma_n$. Then there is some $m < n$ not in the image of $\psi$,
  and as in the proofs of Propositions~\ref{prop:wkendo}
  and~\ref{prop:weakmult} we may form the type-elements $(\res \psi m,
  \res \ell m)$ and $(\psi \!\setminus\! m, \ell
  \!\setminus\! m)$ and the term-element $\pi_i(\phi_n,
  h_n) \in \Tm{P^2X}{\psi \!\setminus\! m, \ell \!\setminus\! m}$. Now
  in the free $\{w,p\}$-\textsc{gat} on $PX$, weakening the judgement
  represented by this term-element with respect to $\J(\res \psi m,
  \res \ell m)$ yields the judgement representing %  Note that $\mathsf{dp}_{\psi
%     \setminus m}(n-1) = \mathsf{dp}_{\psi}(n) = \# n$ and that $((\phi
%   \! \setminus \! m)_{n-1}, (h \! \setminus \! m)_{n-1}) = (\phi_n,
%   h_n)$; it follows that we have a well-defined element
% \begin{equation}\label{eq:element1}
% \pi_i((\phi \!
%    \setminus\! m)_{n-1}, (h
%    \!\setminus\! m)_{n-1})
%    \in \Tm {P^2 X}{\psi
%      \!\setminus\! m, \ell \!  \setminus\! m}\rlap{ ,}
%  \end{equation}
%  and by weakening the corresponding judgement of the free
% $\{w,p\}$-\textsc{gat} with respect to the judgement represented by
% $(\res \psi m, \res \ell m)$, we obtain back the judgement
%representing 
$\pi_i(\phi_n, h_n) \in \Tm {P^2X}{\psi, \ell}$. As in the proof of
Proposition~\ref{prop:weakmult}, applying $\mu^P_X$ to $(\res \psi m,
\res \ell m)$ yields $(\res{(\psi \star \phi)}{m}, \res{(\psi \star
  h)}{m})$, while by the inductive hypothesis, applying it 
to $\pi_i(\phi_n, h_n) \in \Tm{P^2X}{\psi \!\setminus\! m,
    \ell \!\setminus\! m}$ yields
%to the
%element~\eqref{eq:element1} above yields
% .  By induction and
%   direct calculation, we have that $\mu^P_X(\res \psi m, \res \ell m)
%   = \res{\mu^W_X(\psi, \ell)}{m}$ and
$
\pi_j \in \Tm {PX}{(\psi \star \phi) \!
\setminus \! m, (\psi \star h) \!
\setminus \! m}
% & = \pi_{\psi^{\# n - i}(n)}((\psi \star \phi) \!
% \setminus \! m, (\psi \star h) \!
% \setminus \! m)
$
where here \[j = {(\psi \setminus m)^{\mathsf{dp}_{\psi \setminus m}(n - 1)
    - i}(n-1)} = (\psi \setminus m)^{\# n - i}(n-1)\rlap.\] Since $m$ is not
in the image of $\psi$, it follows easily from~\eqref{eq:phiminus}
that $j = \psi^{\# n - i}(n)$ if $i < m$ and $j = \psi^{\# n - i}(n) -
1$ if $i > m$. Weakening the judgement represented by this
term-element with respect to $\J(\res{(\psi \star \phi)}{m},
\res{(\psi \star h)}{m})$ is easily seen (in either of the two cases
$i < m$ and $i > m$) to yield the judgement represented by
$\pi_{\psi^{\# n - i}(n)}$, as
required. We conclude as before by using the fact that $\mu^P_X$
preserves derivations.
% Now the judgement $\J(\psi, \ell \cdot \pi_i)$ is derivable from $\J(\res
% \psi m, \res \ell m)$ and $\J(\psi \!\setminus\! m, \ell \! \setminus\! m
% \cdot \pi_i)$ by a single application of weakening and an
% $\alpha$-conversion, and it follows that the judgement represented by
% $\mu^P_X(\psi, \ell \cdot \pi_i)$ may be derived in the same way from
% $\J(\res{\mu^W_X(\psi, \ell)}{m})$ and $\J(\pi_{\psi^{\# n -
%     i}(n)}(\mu^W_X(\partial \psi, \partial \ell ) \setminus m))$; but it
% is easy to see that the judgement so obtained is $\J(\pi_{\psi^{\# n
%       - i}(n)}(\mu_X^W(\partial \psi, \partial \ell)))$ as required.
% Let $(\psi, k) = (\psi, (\phi, h))$ be an element of $W^2X(n)$,
  % representing as in the proof of Proposition~\ref{prop:wkendo} a
  % judgement $\J_{\psi, k}$ of $GWX$ of the form~\eqref{eq:jweaken}. If
  % $\J_1, \dots, \J_n$ are the basic type judgements of $GWX$
  % corresponding to the type-elements $k(1), \dots, k(n)$ of $WX$, then
  % $\J_{\psi, k}$ is derivable from the $\J_i$'s; let $\D$ be a
  % derivation-tree witnessing this. As each $k(i) = (\phi_i, h_i)$ is a
  % type-element of $WX$, it corresponds in turn to a derivable
  % judgement $\J_{\phi_i, k_i}$ of $GX$, and the element $\mu_X(\psi,
  % k) \in WX(n)$ now represents the type judgement at the conclusion of
  % the derivation-tree $\D' = \D[\J_{\phi_1, k_1}, \dots, \J_{\phi_n,
  %   k_n} / \J_1, \dots, \J_n]$.
    % With this description in hand, we now prove that $\mu_X(\psi, (\phi,
  % h)) = (\psi \star \phi, \psi \star h)$. 
\end{proof}

\section{Substitution}
\label{sec:subst}
Our next step will be to consider the  structure imposed on a type-and-term
structure by the substitution rules, thus describing the 
\emph{substitution monad} $S$ induced by the free-forgetful adjunction $
\{s\}\text-\cat{GAT} \leftrightarrows [\H^\op, \cat{Set}]$.

\subsection{Underlying endofunctor}
We begin by  describing the underlying endofunctor of the
substitution monad. Whereas the combinatorics of weakening are
controlled by min-heaps, those of substitution are controlled by
increasing lists of natural numbers.
\begin{Defn}
  A \emph{inc-list} of length $n$ is a function $\alpha \colon [0, n]
  \to \mathbb N$ such that $\alpha(0) = 0$ and $\alpha(i) > \alpha(j)$
  whenever $i > j$.  We write $\mathrm{Inc}(n)$ for the set of
  inc-lists of length $n$. Given $\alpha \in \mathrm{Inc}(n+1)$, we
  write $\partial(\alpha)$ for $\res \alpha {[0,n]} \in
  \mathrm{Inc}(n)$.
\end{Defn}

An inc-list $\alpha$ can be seen as encoding the shape of a type
judgement in a free $\{s\}$-\textsc{gat}. The length of the list gives
the degree of the judgement, while the values $\alpha(1), \dots,
\alpha(n)$ indicate the degrees of the individual types appearing in
it. The case where $\alpha(i) = i$ for each $i$ encodes a judgement
without substitution; otherwise, we have values $\alpha(m)$ and
$\alpha(m+1)$ that are not consecutive, and this must be compensated
for by the substitution of suitable terms into the $m+1$st type to
reduce its degree to merely one greater than that of the $m$th type. The
following definition make this precise.

\begin{Defn}\label{def:inclist}
%  \begin{enumerate}[(a)]
%    \item 
Given $\alpha \in \mathrm{Inc}(n)$, we define the presheaf $[\alpha] \in
  [\H^\op, \cat{Set}]$ encoding a type judgement of shape $\alpha$ by:
\begin{align*}
\ab\alpha(i) &= \begin{cases}
\{i\} & \text{if $i \leqslant \alpha(n)$;} \\
\emptyset & \text{otherwise;}
\end{cases}
& \text{and }\ 
\ab\alpha(i_t) &= \begin{cases}
\{i_t\} & \text{if $i \leqslant \alpha(n)$, $i \notin \im \alpha$; }\\
\emptyset & \text{otherwise.}
\end{cases}
\end{align*}
with the unique possible boundary maps.
%If $n > 0$, there is an
%evident inclusion map $\ab{\partial(\phi)} \to \ab{\phi}$, which we
% write as $\iota$.
% \vskip0.5\baselineskip
% \item Given $\alpha \in \mathrm{Inc}(n)$, we define the presheaf
%   $[\alpha]_t \in [\H^\op, \cat{Set}]$ encoding a term judgement of
%   shape $\alpha$ by adjoining a new term-element $\alpha(n)_t$ to $\ab
%   \alpha$ with $\partial(\alpha(n)_t) = \alpha(n)$.
% % We write $\jmath \colon \ab{\phi} \to \ab{\phi}_t$ for the
% %  evident inclusion.
% \end{enumerate}
\end{Defn}
For example, if $\alpha \in \mathrm{Inc}(3)$ has values $0 < 2 < 3
< 5 < 8$, then $\ab \alpha$ is given by:
\[
\cd[@C-1.2em]{
& \{1_t\} &  \emptyset & \emptyset & \{4_t\} & \emptyset & \{6_t\} &
\{7_t\} & \emptyset\\
\{1\} \ar@{<-}[ur] \ar@{<-}[r] & \{2\} \ar@{<-}[r] \ar@{<-}[ur]
& \{3\} \ar@{<-}[r] \ar@{<-}[ur] & \{4\} \ar@{<-}[r] \ar@{<-}[ur] &
\{5\} \ar@{<-}[ur] \ar@{<-}[r]
& \{6\} \ar@{<-}[r] \ar@{<-}[ur] & \{7\} \ar@{<-}[r] \ar@{<-}[ur] &
\{8\} \ar@{<-}[ur] }
\]
% and the form of type judgement that this would correspond to would be

% \begin{multline*}
% x_2 : A_2(a_1), x_3 : A_3(a_1, x_2), x_5 : A_5(a_1, x_2, x_3, a_4(a_1,
% x_2, x_3)) \\ \vdash A_8(a_1, x_2, x_3, a_4(a_1, x_2, x_3), x_5, a_6(a_1,
% x_2, x_3, a_4(a_1, x_2, x_3), x_5), a_7(a_1, x_2, x_3, a_4(a_1, x_2, x_3), x_5, a_6(a_1,
% x_2, x_3, a_4(a_1, x_2, x_3), x_5))) \ \ty
% \end{multline*}

For a general $\alpha \in \mathrm{Inc}(n)$, a map $h \colon \ab \alpha
\to X$ in $[\H^\op, \cat{Set}]$ is determined by giving, firstly, a
type-element $h(\alpha(n)) \in X(\alpha(n))$---which determines $h(i)
\in X(i)$ for each smaller $i$ by taking iterated boundaries---and
secondly, term-elements $h(i_t) \in \Tm {X} {h(i)}$ for each $i \in
[\alpha(n)] \setminus \im \alpha$.  When $n > 1$, we write $\partial
h \colon \ab{\partial \alpha} \to X$ for the restriction of $h$ along the obvious inclusion
$\ab{\partial \alpha} \to \ab \alpha$.

\begin{Prop}\label{prop:substendo}
  The value at $X \in [\H^\op, \cat{Set}]$ of the underlying
  endofunctor of the substitution monad has type-elements and
  boundaries given by
   \begin{gather*}
  SX(n) = \sum_{\phi \in \mathrm{Inc}(n)}\, [\H^\op, \cat{Set}](\ab
  \alpha, X)  \qquad
\begin{aligned}[t]
  \partial \colon
SX(n+1) & \to SX(n) \\ 
 (\alpha, h) & \mapsto (\partial \alpha, \partial h)
\end{aligned}
\end{gather*}
and term-elements $\Tm {SX} {\alpha, h} = \Tm X {h(\alpha(n))}$ for each $(\alpha,
h) \in SX(n)$.
%In particular, $W$ is parametric right adjoint and finitary.
\end{Prop}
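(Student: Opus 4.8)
The plan is to follow the template of the proof of Proposition~\ref{prop:wkendo}. By Proposition~\ref{prop:commonanalysis}, type-elements of $SX(n)$ are $\alpha$-equivalence classes of type judgements of the free $\{s\}$-\textsc{gat} on $X$, each class containing a unique canonical representative $\J = (x_1 \ti T_1,\, \dots,\, x_{n-1} \ti T_{n-1} \vdash T_n\ty)$. The first task is a normal-form analysis: by induction on derivations I would show that each $T_i$ arises from a basic type-element of $X$ by substituting basic term-elements into some of its argument slots. Recording, for each $i$, the degree $\alpha_\J(i)$ of the underlying basic type-element of $T_i$ produces a function $\alpha_\J \colon [0,n] \to \mathbb N$; the key observation is that it is strictly increasing --- consecutive types in a basic judgement have consecutive degrees, and each substitution creates exactly one ``jump'' --- so that $\alpha_\J \in \mathrm{Inc}(n)$, with the gaps $[\alpha_\J(n)] \setminus \im \alpha_\J$ indexing the positions at which terms have been substituted. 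Recording the underlying basic type-element $h_\J(\alpha_\J(n)) \in X(\alpha_\J(n))$ of $T_n$ together with the substituted terms $h_\J(i_t) \in \Tm X {h_\J(i)}$ at the gap positions then gives, after a secondary induction in the spirit of~\eqref{eq:welldef} verifying coherence, a well-defined map $h_\J \colon \ab{\alpha_\J} \to X$; the same induction shows $\alpha_{\partial \J} = \partial \alpha_\J$ and $h_{\partial \J} = \partial h_\J$.

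This yields a function $\theta \colon SX(n) \to \sum_{\alpha \in \mathrm{Inc}(n)} [\H^\op, \cat{Set}](\ab \alpha, X)$ sending $[\J]$ to $(\alpha_\J, h_\J)$, which commutes with the boundary maps by construction and is injective because $(\alpha_\J, h_\J)$ determines $\J$ up to $\alpha$-equivalence. For surjectivity, given $(\alpha, h)$ I would write down the evident canonical judgement $\J(\alpha, h)$ realising this data and prove it derivable in the free $\{s\}$-\textsc{gat} by induction on the number of gaps $g = \alpha(n) - n$. When $g = 0$, $\alpha$ is the identity inclusion and $\J(\alpha, h) = \J_{h(n)}$ is basic. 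When $g > 0$, choose some $k \leqslant \alpha(n)$ not in the image of $\alpha$; forming $\hat \alpha \in \mathrm{Inc}(n+1)$ by inserting the value $k$ in the appropriate place (so that $\hat\alpha(n+1) = \alpha(n)$ and $\hat\alpha$ has $g-1$ gaps) and $\hat h \colon \ab{\hat\alpha} \to X$ by discarding the term $h(k_t)$, the judgement $\J(\hat \alpha, \hat h)$ is derivable by induction. The datum $h(k_t)$ supplies a term judgement $\Gamma \vdash t' : T$ whose context, term and type are the initial segment of $\J(\hat\alpha, \hat h)$ at levels strictly below $k$; this judgement has strictly fewer than $g$ gaps, so --- running the induction simultaneously over type and term judgements --- it too is derivable; substituting it into $\J(\hat\alpha, \hat h)$ and $\alpha$-converting yields $\J(\alpha, h)$, as needed. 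As in the weakening proof, one then invokes preservation of derivations by the relevant interpretation to conclude.

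It remains to treat term-elements. Any term judgement of the free $\{s\}$-\textsc{gat} on $X$ is $\alpha$-equivalent to a unique canonical one $x_1 \ti T_1,\, \dots,\, x_{n-1} \ti T_{n-1} \vdash t : T_n$, whose boundary is a type judgement $\J(\alpha, h)$; induction on derivations shows that $t$ must be $a(x_1, \dots, x_{n-1})$ for a basic term-element $a$ with $\partial a = h(\alpha(n))$, the substituted terms occurring in $t$ being forced to coincide with the gap terms already recorded by $h$, so that $\Tm{SX}{\alpha, h} \cong \Tm X {h(\alpha(n))}$ as claimed. The main obstacle is the surjectivity step: unlike the weakening case, where stripping a leaf from the forest decreases the degree $n$, here undoing a substitution \emph{increases} the degree, so the induction must be run on the number of gaps $\alpha(n) - n$ rather than on $n$, and it must cover type and term judgements at once so that the ``un-substituted'' term judgement $\Gamma \vdash t' : T$ is available with a strictly smaller measure. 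The attendant re-indexing of context variables when a context entry is inserted or deleted, and the verification that the type substituted into genuinely matches the type of the substituted term, are the fiddly points, entirely analogous to the manipulation of~\eqref{eq:phiminus} in the proof of Proposition~\ref{prop:wkendo}.
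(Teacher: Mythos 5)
Your proposal is correct and follows essentially the same route as the paper's proof: a normal-form analysis by induction on derivations producing the assignment $[\J] \mapsto (\alpha_\J, h_\J)$, then surjectivity by a simultaneous induction on the gap count $\alpha(n) - n$ for type and term judgements at once, undoing one substitution per step by inserting a context entry, with term-elements identified with $\Tm X {h(\alpha(n))}$. The only cosmetic points are that the canonical term expression is $a$ applied to $\alpha(n)-1$ variables with the same substitutions as occur in the type (not literally $a(x_1,\dots,x_{n-1})$, though you note the substitutions), and the closing appeal to ``preservation of derivations by an interpretation'' is not needed for the endofunctor description --- it only enters in the unit and multiplication arguments.
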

%Recall that a functor is called \emph{finitary} if it preserves
%filtered colimits.
\begin{proof}
  We prove by induction on derivations that, if $\J$ is a representative
  type judgement~\eqref{eq:jweaken} of the free
  $\{s\}$-\textsc{gat} on $X$, then there are natural numbers $0 <
  \alpha(1) < \dots < \alpha(n)$, type-elements $A_i \in X(\alpha(i))$
  for $i \in [n]$, and term-elements $a_i \in X(i_t)$ for $i \in
  [\alpha(n)] \setminus
  \im \alpha$, such that each $T_i$ in $\J$ is of the form
 % and expressions $t_1,
 %  \dots, t_{\alpha(n)-1}$, where $t_k$ is $x_k$ if $k$ is one of the
 %  $\alpha(i)$'s and is $f_k(x_1, \dots, x_{k-1})$ for some $f_k \in
 %  X(k_t)$ otherwise, such that each $T_i$ is of the form
\begin{equation}\label{eq:subst1}
A_i(x_1, \dots, x_{\alpha(i)-1})[t_{\alpha(i) - 1} / x_{\alpha(i) - 1}]\dots[t_1 / x_1][x_1 /
x_{\alpha(1)}]\dots [x_{i-1} / x_{\alpha(i-1)}]\rlap{ ;}
\end{equation}
here, $t_i$ is the expression $x_i$ if $i \in \im \alpha$ and is
$a_i(x_1, \dots, x_{i-1})$ otherwise.
% for some $A_i \in X(\alpha(i))$. We define $\alpha_\J = \alpha \in
% \mathrm{Inc}(n)$. 
By a further induction on derivations, we may show that each $A_i$ and
each $\partial(a_i)$ is of the form $\partial^\ell(A_n)$ for a
suitable $\ell$.  It follows that we have a well-defined map $h \colon
\ab \alpha \to X$ given by $h(i) = \partial^{\alpha(n) - i}(A_n)$ and
$h(i_t) = a_i$; and that the pair $(\alpha, h)$ encodes all the
information of the type judgement $\J$.  

A similar induction on derivations shows that  a
 term judgement $\J'$ of the form~\eqref{eq:jweaken2} in the free
$\{s\}$-\textsc{gat} on $X$ is given by a type judgement as above
together with a term expression $t$ of the form
\begin{equation*}
a(x_1, \dots, x_{\alpha(n)-1})[t_{\alpha(n) - 1} / x_{\alpha(n) - 1}]\dots[t_1 / x_1][x_1 /
x_{\alpha(1)}]\dots [x_{i-1} / x_{\alpha(n-1)}]
\end{equation*}
for some $a \in \Tm X {A_n} = \Tm X {h(\alpha(n))}$. We thus have maps
% $A(x_{j_1}, \dots, x_{j_{k-1}})$ for some $A \in
% X(k)$ and $0 < j_1< \dots < j_{k-1} < i$; we set $h_\J(i) = A$, and
% set $\phi_\J(i) = 0$ if $k = 1$ and $\phi_\J(i) = j_{k-1}$
% otherwise. This defines a min-heap $\phi_\J \in \mathrm{Hp}(n)$, and
% by a further induction on derivations, we see that
% \begin{equation}\label{eq:welldef}
% \text{if $T_i$ is $A(x_{j_1}, \dots, x_{j_{k-1}})$ and $k > 1$, then
%   $T_{j_{k-1}}$ is $(\partial A)(x_{j_1}, \dots, x_{j_{k-2}})$,}
% \end{equation}
% % some $n_i > 0$ and $\phi_i \colon \{1, \dots, n_i-1\} \to \{1, \dots, i-1\}$;
% % \item $n_{\phi_i(n_i-1)} = n_i - 1$;
% % \item $\res{(\phi_i)}{n_i - 2} = \phi_{\phi_i(n_i-1)}$;
% % \end{enumerate}
% % For each $1 \leqslant i \leqslant n$,\vskip0.5\baselineskip
% % To any $\J$ as in~\eqref{eq:jweaken} we associate the min-heap
% % $\phi_\J \in \mathrm{Hp}(n)$ defined by
% % \[\phi_\J(i) =
% % \begin{cases}
% %   0 & \text{if $T_i$ is $A$ for some $A \in X(1)$;} \\ j_{k-1} &
% %   \text{if $T_i$ is $A(x_{j_1}, \dots, x_{j_{k-1}})$ with $k > 1$;}
% % \end{cases}
% % \]
% which  implies that $h_\J$ is a well-defined map $ \ab{\phi_\J}
% \to X$. We thus have a function
\begin{equation}
\label{eq:mappingsubst}
\begin{aligned}
SX(n) & \to\!\!  \sum_{\alpha \in \mathrm{Inc}(n)} \![\H^\op,
\cat{Set}](\ab \alpha, X) &
\Tm {SX}{[\J]} & \to \Tm {X} {h(\alpha(n))}
\\
[\J] & \mapsto (\alpha_\J, h_\J) &
[\J'] & \mapsto a_{\J'}
\end{aligned}
\end{equation}
which by the above are well-defined, injective and compatible with
boundaries. It remains to prove their surjectivity.  Given $\alpha \in
\mathrm{Inc}(n)$ and $h \colon \ab \alpha \to X$, by taking $a_i =
h(i_t)$ (for $i \in [\alpha(n)] \setminus \im \alpha$) and defining
$T_i$ as in~\eqref{eq:subst1}, we obtain a type judgement $ \J(\alpha,
h)$ of the form~\eqref{eq:jweaken}, whose image
under~\eqref{eq:mappingsubst} will be $(\alpha, h)$ so long as it is
in fact a derivable type judgement of the free $\{s\}$-\textsc{gat} on
$X$. Similarly, to any $a \in \Tm {X} {h(\alpha(n))}$ we may assign
a term judgement $\J(\alpha, h, a)$ with boundary $\J(\alpha, h)$
which will be sent to $a$ by the right-hand map in~\eqref{eq:mappingsubst}
so long as it is in fact derivable.

We prove derivability of these two kinds of judgements simultaneously
by induction on the value $\alpha(n) - n$. In the base case $\alpha(n)
= n$ we see easily that $\J(\alpha,h)$ and $\J(\alpha, h,a)$ are basic
judgements of the free $\{s\}$-\textsc{gat} on $X$, and so derivable. For the
inductive step, suppose that $\alpha(n) - n > 0$, and we wish to
derive $\J(\alpha, h)$. Choose some $m < n$ and $j \in \mathbb N$ such
that $\alpha(m) < j < \alpha(m+1)$. We now define $\alpha_j \in
\mathrm{Inc}(n+1)$ and $ \alpha^ j \in \mathrm{Inc}(m+1)$ by
\[
(\alpha_j)(i) = \begin{cases}
\alpha(i) & i \leqslant m \rlap{ ;} \\
j & i = m +1 \rlap{ ;} \\
\alpha(i-1) & i > m+1 \rlap{ ;}
\end{cases} \qquad \text{and} \qquad
\alpha^j = \res{(\alpha_j)}{[0, m+1]}\rlap{ .}
\]

Note that $\alpha_j(n+1) = \alpha(n)$ since $n + 1 > m + 1$, and so
$\alpha_j(n+1) - (n+1) < \alpha(n) - n$.  Likewise $\alpha^j(m+1) = j <
\alpha(m+1)$ and so $\alpha^j(m+1) - (m+1) < \alpha(m+1) - (m+1)
\leqslant \alpha(n) - n$. Note further that $\ab{\alpha_j}$ and
$[\alpha^j]$ are subpresheaves of $\ab\alpha$; we write $h_j$ and
$h^j$ for the restrictions of $h$ to them.  By induction,
$\J(\alpha_j,\, h_j)$ is a derivable type judgement and
$\J(\alpha^j,\, h^j, h(j_t))$ a derivable term judgement, and
substituting the latter into the former (and $\alpha$-converting) now
yields the required derivation of $\J(\alpha, h)$.  In a similar
manner, each term judgement $\J(\alpha, h,a)$ may be derived
inductively from $\J(\alpha_j, h_j, a)$ and $\J(\alpha^j, h^j, h(j_t))$.
\end{proof}

\subsection{Unit and multiplication}
We now describe the  unit
 $\eta^S \colon 1 \Rightarrow S$ and the multiplication $\mu^S \colon SS
 \Rightarrow S$ of the substitution monad.

 \begin{Defn}
   For any $n \geqslant 1$, we write $\iota_n \in \mathrm{Inc}(n)$ for
   the inc-list given by $\iota_n(i) = i$. For any $X \in [\H^\op,
   \cat{Set}]$ and $A \in X(n)$, we define $\tilde A \colon \ab{
   \iota_n} \to X$ by $\tilde A(i) = \partial^{n-i}(A)$ for each $i \in
   [n]$.
 \end{Defn}
 As before, $\ab{\iota_n}$ is isomorphic to the representable
 $\H(\thg, n)$ so that $\tilde A$ corresponds to $A \in X(n)$ under
 the Yoneda lemma.

 \begin{Prop}\label{prop:substunit}
   For each $X \in [\H^\op, \cat{Set}]$, the unit map $\eta^S_X \colon X
   \to SX$ of the substitution monad has
   components
\begin{align*}
X(n) & \to SX(n) & \Tm X A & \to \Tm {SX} {\iota_n, \tilde A}\\
A & \mapsto (\iota_n, \tilde A) & a & \mapsto a\rlap{ .}
\end{align*}
\end{Prop}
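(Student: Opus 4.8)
The plan is to follow verbatim the strategy used for the weakening unit in Proposition~\ref{prop:weakunit}. First I would recall that, by the construction in Proposition~\ref{prop:leftadjmonadic}, the map $\eta^S_X$ is the unit $\eta_X \colon X \to V_{\{s\}}G_{\{s\}}X = SX$ of the adjunction $G_{\{s\}} \dashv V_{\{s\}}$; thus it sends a type-element $A \in X(n)$ with successive boundaries $A_{n-1}, \dots, A_1$ to the $\equiv$-class of the basic judgement
\[
x_1 \ti A_1,\, x_2 \ti A_2(x_1),\, \dots,\, x_{n-1} \ti A_{n-1}(x_1, \dots, x_{n-2}) \vdash A(x_1, \dots, x_{n-1}) \ty ,
\]
and sends a term-element $a \in \Tm X A$ to the $\equiv$-class of the corresponding basic term judgement $x_1 \ti A_1,\, \dots,\, x_{n-1} \ti A_{n-1}(x_1, \dots, x_{n-2}) \vdash a(x_1, \dots, x_{n-1}) \ti A(x_1, \dots, x_{n-1})$.

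Next I would trace these representatives through the identification $SX(n) \cong \sum_{\alpha \in \mathrm{Inc}(n)} [\H^\op, \cat{Set}](\ab\alpha, X)$ furnished by the proof of Proposition~\ref{prop:substendo}. The basic type judgement above involves no substitutions at all: each $T_i$ is literally $A_i(x_1, \dots, x_{i-1})$, which is precisely the degenerate instance of~\eqref{eq:subst1} in which $\alpha(i) = i$ for every $i$ and no term $t_j$ actually occurs. Hence under the map~\eqref{eq:mappingsubst} it is sent to the pair whose inc-list is $\iota_n$ and whose structure map $h \colon \ab{\iota_n} \to X$ satisfies $h(i) = \partial^{\alpha(n)-i}(A_n) = \partial^{n-i}(A)$, i.e.\ $h = \tilde A$. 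This yields the stated formula on type-elements. The term-element case is identical: the basic term judgement is exactly the $\alpha(n) = n$ base case of the term-judgement analysis in the proof of Proposition~\ref{prop:substendo}, so it is sent to $a \in \Tm X {h(\alpha(n))} = \Tm X {h(n)} = \Tm X A$ (using $\tilde A(n) = A$).

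The only point that needs any care---and the closest thing to an obstacle---is the purely clerical matter of reading off, from Definition~\ref{def:inclist} and~\eqref{eq:subst1}, which inc-list $\alpha$ and which map $h$ a substitution-free judgement is assigned under the bijection~\eqref{eq:mappingsubst}; comparing this against the explicit shape of the basic judgement above gives $(\iota_n, \tilde A)$ immediately, so no genuine difficulty arises. Functoriality and naturality of $\eta^S$ are already guaranteed by Proposition~\ref{prop:leftadjmonadic}, so nothing further is required, and the proof is short.
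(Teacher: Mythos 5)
Your proposal is correct and follows exactly the route the paper intends: the paper's own proof simply says the result is immediate from examining the proof of Proposition~\ref{prop:substendo}, and your argument spells out that examination (unit of the adjunction sends $A$ to the class of the substitution-free basic judgement, which under the bijection~\eqref{eq:mappingsubst} corresponds to $(\iota_n,\tilde A)$, and likewise for terms), mirroring the treatment of the weakening unit in Proposition~\ref{prop:weakunit}. Nothing is missing.
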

\begin{proof}
Immediate from examination of the proof of
Proposition~\ref{prop:substendo}.
\end{proof}
We now turn to the multiplication of $S$, for which we need an
explicit description of $S^2X$.
%  The operation on inc-lists
% which controls this is rather straightforward to describe.
% \begin{Defn}
%   Let $\alpha \in \mathrm{Inc}(n)$ and $\beta \colon \ab \alpha \to
%   S1$ in $[\H^\op, \cat{Set}]$. We define $\beta \star \alpha \in
%   \mathrm{Inc}(n)$ to be the composite
%   \[
%   [0, n] \xrightarrow{\alpha} [0, \alpha(n)]
%   \xrightarrow{\beta(\alpha(n))} \mathbb{N}\rlap{ .}
%   \]
% \end{Defn}
Given $\alpha \in \mathrm{Inc}(n)$, a map $[\alpha] \to SX$ is
determined as in the discussion following
Definition~\ref{def:inclist} by its value at $\alpha(n)$ and its
values at $i_t$ for each $i \in \ab{\alpha(n)} \setminus \im
\alpha$. Giving these data amounts to giving
\begin{itemize}
\item An element $(\beta \in \mathrm{Inc}(\alpha(n)),\, h \colon \ab \beta \to X) \in
SX(\alpha(n))$; and
\item Elements $k(i) \in \Tm {SX}{\partial^{\alpha(n) -
    i}(\beta, h)} = \Tm X {h(\beta(i))}$ for  $i \in
  [\alpha(n)] \setminus \im \alpha$.
\end{itemize}
Thus we write a typical type-element of $S^2X$ as $(\alpha \in
\mathrm{Inc}(n), (\beta, h, k) \colon \ab \alpha \to SX)$. Now on
terms, we have $\Tm {S^2X}{\alpha, (\beta, h, k)} = \Tm {SX}{\beta, h}
= \Tm X {h(\beta(\alpha(n)))}$.

%  pair $(\beta_{i_t} \in
% \mathrm{Inc}(i), h_{i_t} \colon \ab{\beta_{i_t}}_t \to X)$, subject to
% the boundary conditions $\partial(\beta_{i+1}, h_{i+1}) = (\beta_i,
% h_i)$ and $\partial(\beta_{i_t}, h_{i_t}) = (\beta_i, h_i)$. We write
% such a morphism as $(\beta, h) \colon \ab \alpha \to SX$; similarly,
% we write $(\beta, k) \colon \ab{\alpha}_t \to SX$ for typical
% term-elements of $S^2X$.

% for which we need an explicit
% description of $S^2X$. In a typical type-element $(\alpha \in
% \mathrm{Inc}(n),\, \ell \colon \ab \alpha \to SX)$, the map $\ell$ is
% specified by a type-element
% \[\ell(\alpha(n)) = (\beta \in \mathrm{Inc}(\alpha(n)),\, h \colon \ab
% \beta \to X) \in SX(\alpha(n))\] together with, for each $i \in [\alpha(n)]
% \setminus \im \alpha$, a term-element $\ell(i)$ of $SX$ with boundary
% $\partial^{\alpha(n)-i}(\beta, h)$; this boundary condition forces
% \[\ell(i) = (\res{\beta}{[0,i]} \in
% \mathrm{Inc}(i),\, k_i \colon [\res\beta{[0,i]}]_t \to X) \in
% SX(i_t)\] with $\partial k_i = \res{h}{[0,i]}$, so that the only
% new datum provided by $\ell(i)$ is the term-element $k_i(\beta(i)_t)
% \in X(\beta(i)_t)$ with boundary $h(\beta(i))$.  We write this element
% simply as $k(\beta(i)_t)$, and now write $\ell = (\beta, h, k) \colon \ab \alpha
% \to SX$.

% Similarly, for a typical element $(\alpha \in \mathrm{Inc}(n), \ell
% \colon \ab{\alpha}_t \to SX)$ of $S^2X(n_t)$ we write $\ell = (\beta,
% h, k)$ where now $(\beta, h) \in SX(\alpha(n)_t)$ and $k$ is specified
% as before by values $k(\beta(i)_t)$ for $i \in [\alpha(n)] \setminus
% \im \alpha$.

\begin{Defn}
  Given $(\alpha, (\beta, h, k)) \in S^2X(n)$, we define the inc-list $\beta \alpha \in \mathrm{Inc}(n)$ by $(\beta\alpha)(i) =
  \beta(\alpha(i))$ and define $h \cup k \colon \ab{\beta
    \alpha} \to X$ by taking $(h \cup k)(i) = h(i)$ for $i \in
  [\beta\alpha(n)]$ and for $i \in [\beta\alpha(n)] \setminus \im
  \beta \alpha$ taking
\[
(h \cup k)(i_t) = \begin{cases}
h(i_t) & \text{for $i \notin \im \beta$;} \\
k(j) & \text{for $i = \beta(j)$, $j
  \notin \im \alpha$.}
\end{cases}
\]
% Similarly, for $(\alpha, (\beta, k, k')) \in S^2X(n_t)$, we define
% $\alpha \star (k, k')$ just as above, but also with $\alpha \star (k,
% k')(\beta(\alpha(n))_t) = k(\beta(\alpha(n))_t)$.
\end{Defn}
 \begin{Prop}\label{prop:substmult}
   For each $X \in [\H^\op, \cat{Set}]$, the multiplication $\mu^S_X
   \colon S^2X \to SX$ of
   the substitution monad has components
\begin{align*}
S^2X(n) & \to SX(n) & \Tm {S^2 X} {\alpha, (\beta, h, k)} & \to \Tm {SX}
{\beta \alpha, h \cup k}\\
(\alpha, (\beta, h, k)) & \mapsto (\beta \alpha, h \cup k) & a
 & \mapsto a\rlap{ .}
\end{align*}
\end{Prop}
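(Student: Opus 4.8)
The plan is to follow the same template as the proofs of Propositions~\ref{prop:wkendo}, \ref{prop:weakmult} and~\ref{prop:substendo}. The key structural fact is that $\mu^S_X$ is the image under the forgetful functor $V$ of the counit interpretation $\epsilon_{GX} \colon GVGX \to GX$, and that interpretations preserve derivations; so as soon as we know how the judgement $\J(\beta\alpha, h \cup k)$ is assembled by substitution from smaller judgements, we can read off how $\mu^S_X$ acts on the corresponding element of $S^2X$.

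First I would dispose of the case $\alpha = \iota_n$. Here the components of $k$ are indexed by $[\iota_n(n)] \setminus \im \iota_n = \emptyset$, so $k$ is empty and $(\iota_n, (\beta, h, k)) = \eta^S_{SX}(\beta, h)$ by Proposition~\ref{prop:substunit}; the monad unit law then gives $\mu^S_X(\iota_n, (\beta,h,k)) = (\beta, h)$, while a direct inspection of the definitions shows $\beta\iota_n = \beta$ and, since $k$ is empty, $h \cup k = h$. Thus from now on we may assume $\alpha \neq \iota_n$, equivalently $\alpha(n) > n$, and argue by induction on $\alpha(n) - n$, the case $\alpha(n) - n = 0$ being the one just treated.

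For the inductive step I would reuse the decomposition from the proof of Proposition~\ref{prop:substendo}: choose $m < n$ and $j \in \mathbb N$ with $\alpha(m) < j < \alpha(m+1)$, and form $\alpha_j \in \mathrm{Inc}(n+1)$ and $\alpha^j \in \mathrm{Inc}(m+1)$ as there. Restricting the map $(\beta, h, k) \colon \ab\alpha \to SX$ along the subpresheaf inclusions $\ab{\alpha_j} \hookrightarrow \ab\alpha$ and $\ab{\alpha^j} \hookrightarrow \ab\alpha$ produces elements $(\alpha_j, (\beta, h, k_j)) \in S^2X(n+1)$ and $(\alpha^j, (\beta^j, h^j, k^j)) \in S^2X(m+1)$---the former retaining all of $k$ except its $j$-indexed component, which exists precisely because $j \notin \im\alpha$---together with the term-element $k(j) \in \Tm{X}{h(\beta(j))} = \Tm{S^2X}{\alpha^j, (\beta^j, h^j, k^j)}$. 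Since $\alpha_j(n+1) - (n+1) = \alpha(n) - n - 1$ and $\alpha^j(m+1) - (m+1) = j - (m+1) < \alpha(m+1) - (m+1) \leqslant \alpha(n) - n$, the inductive hypothesis computes $\mu^S_X$ on the first of these as $(\beta\alpha_j, h \cup k_j)$ and, on term-elements, sends $k(j)$ to itself. Now $\J(\beta\alpha, h \cup k)$ is, up to $\alpha$-equivalence, exactly what the decomposition of Proposition~\ref{prop:substendo} produces for the inc-list $\beta\alpha$ with the gap at position $m$ of value $\beta(j)$, namely the result of substituting a certain term judgement into a certain type judgement. Carrying out the direct calculations that identify these two pieces with $(\beta\alpha_j, h \cup k_j)$ and $(\beta^j\alpha^j, h^j \cup k^j, k(j))$ respectively---in particular $(\beta\alpha)_{\beta(j)} = \beta\alpha_j$, $(\beta\alpha)^{\beta(j)} = \beta^j\alpha^j$, and $(h \cup k)((\beta(j))_t) = k(j)$---exhibits them as the images under $\mu^S_X$ of our two pieces above; since $\mu^S_X$ preserves derivations, the judgement it produces from $(\alpha, (\beta, h, k))$ is obtained by the same substitution, hence is $\J(\beta\alpha, h \cup k)$, giving $\mu^S_X(\alpha, (\beta, h, k)) = (\beta\alpha, h \cup k)$ as claimed.

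The argument for term-elements is identical, with a term-element $a \in \Tm{X}{h(\beta(\alpha(n)))}$ carried along passively throughout---it lies over the $\alpha_j$-piece and is untouched by the decomposition---exactly as in the corresponding part of the proof of Proposition~\ref{prop:weakmult}. The only real work, and hence the main obstacle, is the index bookkeeping hidden in the phrase ``direct calculations'': one must verify that $\beta$-composition of inc-lists and the $h \cup k$-construction are compatible with the gap-decomposition of inc-lists used in Proposition~\ref{prop:substendo}. These manipulations are straightforward but fiddly, and there is no conceptual difficulty once Propositions~\ref{prop:substendo} and~\ref{prop:substunit} are in hand.
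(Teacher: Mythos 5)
Your proposal is correct and follows essentially the same route as the paper's own proof: the base case $\alpha=\iota_n$ via the unit law, then a simultaneous induction on $\alpha(n)-n$ using the gap-decomposition $(\alpha_j,\alpha^j)$ from Proposition~\ref{prop:substendo}, the identities $(\beta\alpha)_{\beta(j)}=\beta\alpha_j$, $(\beta\alpha)^{\beta(j)}=\beta^j\alpha^j$, and the fact that $\mu^S_X$, being $V$ applied to the counit interpretation, preserves derivations. The ``direct calculations'' you defer are exactly the ones the paper also treats as routine bookkeeping.
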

%Note that the action on terms is well-defined since both domain and
%codomain are equal to $\Tm X  {h(\beta(\alpha(n))}$.
% \begin{proof}
% \begin{align*}
% S^2X(n) & \to SX(n) & S^2X(n_t) & \to SX( n_t)\\
% (\alpha, (\beta, h, h')) & \mapsto (\alpha \star \beta, \alpha \star
% (h,h')) &
% (\alpha, (\beta, h, h', a)) & \mapsto (\alpha \star \beta, (\alpha \star
% (h,h'), a))\rlap{ .}
% \end{align*}
% \end{Prop}
\begin{proof} We prove the result for type- and term-elements
  simultaneously by induction on $\alpha(n) - n$. Consider first a
  type-element $(\alpha, \ell) = (\alpha, (\beta, h, k)) \in
  S^2X(n)$. In the base case where $\alpha(n) = n$, we must have
  $\alpha = \iota_n$ and now $\mu^S_X(\iota_n, (\beta, h, k)) =
  \mu^S_X(\eta^S_{SX}(\beta, h)) = (\beta, h)$, which is visibly equal
  to $(\beta \iota_n, h \cup k)$ (since in this case $k$ is
  trivial). For the inductive step, assume $\alpha(n) > n$. As in the
  proof of Proposition~\ref{prop:substendo}, we can find some $m < n$
  and some $j \in \mathbb N$ with $\alpha(m) < j < \alpha(m+1)$ and
  now form the type-elements $(\alpha_j, \ell_j) \in S^2X(n+1)$ and
  $(\alpha^j, \ell^j) \in S^2X(m+1)$ and term-element $\ell(j_t) =
  k(j) \in \Tm {S^2X} {\alpha^j, \ell^j}$.

  Now $\ell_j = (\beta, h, k') \colon \ab{\alpha_j} \to X$, where $k'$
  is obtained from $k$ by removing the value at $j$; while $\ell^j =
  (\res \beta {[0,j]}, h', k'')$, where where $h'$ is the restriction
  of $h$ along the inclusion $[\res\beta{[0,j]}] \to [\beta]$ and
  $k''$ is the restriction of $k$ to $[j-1]$. Thus
  by induction and direct calculation, we see that
  \begin{gather*}
    \mu^S_X(\alpha_j, \ell_j) = 
    (\beta \alpha_j, h \cup k')
    = ((\beta \alpha)_{\beta(j)}, (h \cup k)_{\beta(j)})\\
\text{and }
    \mu^S_X(\alpha^j, \ell^j) = 
    ((\res \beta {[0,j]})\alpha^j, h' \cup k'')
    = ((\beta\alpha)^{\beta(j)}, (h \cup k)^{\beta(j)})\rlap{ ,}
  \end{gather*}
  and that $k(j) \in \Tm {S^2X}{\alpha^j, \ell^j}$ is sent to $k(j)
  \in \Tm {SX} {(\beta\alpha)^{\beta(j)}, (h \cup k)^{\beta(j)}}$.
  Now the judgement $\J(\alpha, \ell)$ of the free
  $\{s\}$-\textsc{gat} on $SX$ is derivable by substituting
  $\J(\alpha^j, \ell^j, k(j))$ into $\J(\alpha_j, \ell_j)$ and
  $\alpha$-converting, and it follows that the judgement represented by
  $\mu^S_X(\alpha, \ell)$ may be derived by substituting
  $\J((\beta\alpha)^{\beta(j)}, (h \cup k)^{\beta(j)}, k(j))$ into
  $\J((\beta\alpha)_{\beta(j)}, (h \cup k)_{\beta(j)})$ and
  $\alpha$-converting; whence $\mu^S_X(\alpha,
  (\beta, h, k)) = (\beta \alpha, h \cup k)$ as required. The argument
  for term judgements is similar, and hence omitted.
% (\res \psi m
% \star \res \phi m,\, \res \psi m \star \res h m)
% =(\res{(\psi \star \phi)}{m}, \res{(\psi \star
%   h)}{m})\\
% \text{and }
% \mu_X(\psi \!\setminus\! m, k
% \!\setminus\! m) = (\psi
% \!\setminus\! m \star \phi \!\setminus\! m,\, \psi \!\setminus\! m 
% \star h \!\setminus\! m) = ((\psi \star \phi) \!\setminus\! m, (\psi \star h)
% \!\setminus\! m)\rlap{ ;}
\end{proof}

\section{Combining the structures}\label{sec:combining}
We now combine the results of the preceding three sections in order to
describe the structure imposed on a type-and-term structure by all the
deduction rules of generalised algebraic theories; we will thus
describe the monad $T$ for \textsc{gat}s induced by the free-forgetful
adjunction $\cat{GAT} \leftrightarrows [\H^\op, \cat{Set}]$.

Let $P$ and $S$ denote, as in the preceding sections, the weakening-and-projection monad and
the substitution monad on $[\H^\op, \cat{Set}]$. We have natural
transformations $\rho \colon P \Rightarrow T \Leftarrow S \colon
\sigma$ expressing that every derivable
judgement of the free $\{w,p\}$- or $\{s\}$-\textsc{gat} on some $X$ is also
derivable in the free \textsc{gat} on $X$. It is easy to
see that $\rho$ and $\sigma$ are compatible with the unit and
multiplication maps and so exhibit $P$ and $S$ as submonads of
$T$. Our task in this section will be to describe how these submonads
combine together to yield $T$.

\subsection{Underlying endofunctor}
We first characterise the underlying endofunctor of the monad $T$ for
\textsc{gat}s in terms of those of the weakening-and-projection and
substitution monads. Our result expresses that every judgement of the
free \textsc{gat} on $X$ may be obtained in a unique way (up to
$\alpha$-conversion) by first applying substitution to basic
judgements, and then weakening and projection to these substituted
judgements.

\begin{Prop}
For any $X \in [\H^\op, \cat{Set}]$ the composite map
\begin{equation}\label{eq:compositeendo}
  \kappa_X \defeq PSX \xrightarrow{P\tau_X} PTX
  \xrightarrow{\sigma_{TX}} TTX \xrightarrow{\mu^T_X} TX
\end{equation}
is invertible.
\end{Prop}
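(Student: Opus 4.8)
The plan is to prove that $\kappa_X$ is a bijection on type-elements of each degree $n$ and on term-elements over each type-element, by identifying both $PSX$ and $TX$ with explicit collections of derivable judgements of the free \textsc{gat} on $X$, taken up to $\alpha$-conversion — the latter directly via Proposition~\ref{prop:commonanalysis}, the former by combining the descriptions of Propositions~\ref{prop:projendo} and~\ref{prop:substendo} — and then checking that $\kappa_X$ realises a canonical normal form. Concretely, an element of $PSX(n)$ is a heap $\phi \in \mathrm{Hp}(n)$ together with, for each node $i$, a pair $(\alpha_i, g_i) \in SX(\mathsf{dp}_\phi(i))$, these being compatible along $\phi$; tracing through~\eqref{eq:compositeendo} using the explicit formulas for $\mu^S$ and $\mu^P$ (Propositions~\ref{prop:substmult} and~\ref{prop:projmult}) and for the submonad coercions, one sees that $\kappa_X(\phi,h)$ is the judgement obtained by first using substitution to build, from basic judgements of $X$, the type judgement $\J(\alpha_i, g_i)$ named by each node $i$, and then using weakening — and, in the term-element case, possibly projection — to splice these judgements together along the tree order of $\phi$. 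Thus the proposition says exactly that every derivable judgement of the free \textsc{gat} on $X$ can be brought, uniquely up to $\alpha$-equivalence, into this ``all substitutions first, then all weakenings and projections'' shape.

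For surjectivity I would argue by induction on derivations in the free \textsc{gat} on $X$, treating type and term judgements together, that each derivable judgement is $\alpha$-equivalent to $\kappa_X(\phi,h)$ for some $(\phi,h)$. Basic judgements are already in normal form, with $\phi$ the linear heap $\gamma_n$ and each $\alpha_i = \iota_i$. If the last rule applied is weakening or projection, the premise is in normal form by induction and the new step is absorbed by enlarging the heap $\phi$; formally this is a matter of postcomposing with the appropriate component of $\mu^P$ as computed in Proposition~\ref{prop:projmult}, the bookkeeping on heaps being the ``$\phi \!\setminus\! m$'' manipulation of~\eqref{eq:phiminus}. The one genuinely delicate case is when the last rule is a substitution applied to a judgement that is already in normal form $\kappa_X(\phi,h)$: the substitution then sits on top of the entire block of weakenings and projections, and it must be commuted downwards past them until it acts directly on the judgements $\J(\alpha_i, g_i)$, where it is absorbed into $SX$ via $\mu^S$. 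This is the step I expect to be the main obstacle: carrying it out requires a case analysis on the position of the substituted variable in the tree order of $\phi$ relative to the weakened and projected variables — precisely the commutation that is later packaged as the distributive law between $P$ and $S$ — together with the corresponding re-indexing of the inc-lists $\alpha_i$.

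For injectivity — which does not come for free, since $\mu^T_X$ is very far from being monic — I would show that $(\phi,h)$ can be recovered, up to $\alpha$-equivalence, from the normal-form judgement $\kappa_X(\phi,h)$. Writing the latter in the canonical shape~\eqref{eq:jweaken}, the heap $\phi$ is read off from the dependency structure of its context exactly as in the proof of Proposition~\ref{prop:wkendo}, and then the pair $(\alpha_i, g_i)$ attached to node $i$ is recovered from the $i$-th type of the context by the unique decomposition established in the proof of Proposition~\ref{prop:substendo}; hence distinct $(\phi,h)$ yield non-$\alpha$-equivalent judgements. The term-element case runs in parallel, the only new feature being that a term-element of $PSX$ over $(\phi,h)$ may be a projection symbol $\pi_i$ in the sense of Proposition~\ref{prop:projendo}; under $\kappa_X$ these correspond to projection judgements of the kind~\eqref{eq:jproject}, and the same inductive and recovery arguments apply. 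Assembling surjectivity and injectivity on type- and term-elements alike shows that $\kappa_X$ is an isomorphism of type-and-term structures.
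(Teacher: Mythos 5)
Your injectivity argument is essentially sound and is close in spirit to what the paper does: the readback of the heap from the free-variable structure of the context and of each $(\alpha_i,h_i)$ from the decomposition in Proposition~\ref{prop:substendo} is exactly the paper's map $\theta\colon TX(n)\to PSX(n)$. The genuine gap is in your surjectivity induction, in the case where the last rule applied is a substitution into a judgement already in ``substitutions first, then weakenings and projections'' form. That commutation is not a routine case analysis: it is where all the combinatorial content of the paper lives, including the contraction phenomenon when the substituted term is a projection $\pi_i$ (the substitution then does not act on the $SX$-labels at all but collapses nodes of the heap, which is what the order $\leq_k$ and the heap $\bar\phi$ of Definition~\ref{def:termfree} are for), and the reindexing of the inc-lists by $\epsilon_\ell$ when it is a genuine term (as in the ``observation'' inside the proof of Proposition~\ref{prop:distrib}). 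Deferring it to ``precisely the commutation that is later packaged as the distributive law'' is moreover circular in the paper's development: $\delta$ is \emph{defined} from $\mu^{PS}$ via~\eqref{eq:5}, and $\mu^{PS}$ only exists once the isomorphism~\eqref{eq:compositeendo} has been established, so you would have to prove that commutation from scratch here, which your sketch does not do.

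The paper avoids this step entirely. It defines $\theta$ on \emph{all} of $TX$, not just on normal forms, by an induction on derivations whose invariant is much weaker than a normal form: for each context entry $T_i$ one only records that $\mathsf{fv}(T_i)=\{x_{j_1},\dots,x_{j_{k-1}}\}$ and that $x_{j_1}:T_{j_1},\dots,x_{j_{k-1}}:T_{j_{k-1}}\vdash T_i\ \mathsf{type}$ is derivable in the free $\{s\}$-\textsc{gat} on $X$. For this invariant the substitution case of the induction is essentially trivial (one only needs that $\{s\}$-derivability is closed under substitution), so no commutation past weakenings is ever performed. Then, since $\theta$ is visibly injective, it suffices to prove $\theta\circ\kappa_X=1$, which is done by the same leaf-stripping induction on the heap as in Propositions~\ref{prop:wkendo} and~\ref{prop:weakmult}, using only that $\kappa_X$ preserves derivations; in particular the base case uses $\kappa\circ\eta^PS=\sigma$ rather than any explicit trace through~\eqref{eq:compositeendo}. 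If you want to keep your normalisation-of-derivations route, you must actually carry out the substitution-commutation (in effect proving the content of Proposition~\ref{prop:distrib} by hand, including the contraction case); otherwise I would recommend restructuring along the paper's lines.
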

Note first that a type-element of $PSX$
has the form $(\phi, (\alpha, h))$, where $\phi \in \mathrm{Hp}(n)$
and $(\alpha_i, h_i) \in SX(\# i)$ for each $i \in [n]$, such that
$(\partial \alpha_i, \partial h_i) = (\alpha_j, h_j)$ whenever
$\phi(i) = j$. Furthermore, $\Tm {PSX} {\phi, (\alpha, h)}$ is given
by the sum
\begin{equation}
  \label{eq:psxterm}
\Tm {SX}{\alpha_n, h_n} +
\{\pi_i \mid i \in [n-1], \phi(n) = \phi(i),
\alpha_n = \alpha_i, h_n = h_i\}\rlap{ .}
\end{equation}
% where the first factor is $\Tm {WSX} {\phi, (\alpha, h)} = \Tm
% {SX}{\alpha_n, h_n} = \Tm X {h_n(\alpha_n(\# n)}$.

\begin{proof}
  Consider first a representative type judgement $\J$ of the
  form~\eqref{eq:jweaken} in the free \textsc{gat} on $X$. By
  induction on derivations, we show that for each $i \in [n]$ there
  are $0 < j_1 < \dots < j_{k-1} < i$ such that $\mathsf{fv}(T_i) =
  \{x_{j_1}, \dots, x_{j_{k-1}}\}$ and such that
  \begin{equation}
    \label{eq:combine}
  x_{j_1} : T_{j_1}, \dots, x_{j_{k-1}} : T_{j_{k-1}} \vdash T_i\ty
\end{equation}
is derivable in the free $\{s\}$-\textsc{gat} on $X$. Define $\phi(i)$
to be $0$ if $k = 1$ and to be $j_{k-1}$ otherwise, and define
$(\alpha_i, h_i) \in SX(k)$ to be the element representing the
$\alpha$-equivalence class of~\eqref{eq:combine}. Taking also $\phi(0)
= 0$ we obtain a heap $\phi \in \mathrm{Hp}(n)$; moreover, for those
$i$ with $\phi(i) > 0$ we see by a further induction on derivations
that $\mathsf{fv}(T_{j_{k-1}}) = \{x_{j_1}, \dots, x_{j_{k-2}}\}$,
whence $\partial(\alpha_i, h_i) = (\alpha_{\phi(i)},
h_{\phi(i)})$. Thus we have a well-defined map $(\alpha, h) \colon \ab
\phi \to SX$ and so an element $(\phi, (\alpha, h)) \in PSX(n)$. In
this way, we have defined a mapping
\begin{align*}
\theta \colon TX(n) &\to PSX(n)\\
 [\J] &\mapsto (\phi_\J, (\alpha_\J, h_\J))
\end{align*}
which we claim is inverse to the $n$-component
of~\eqref{eq:compositeendo}.  It is easy to see that $\theta$ is
injective, so it is enough to show that $1 = \theta \circ \kappa_X \colon
PSX(n) \to TX(n) \to PSX(n)$.

So let $(\phi, \ell) = (\phi, (\alpha, h)) \in PSX(n)$. If $\phi =
\gamma_n$, then $(\phi, \ell) = \eta^P_{SX}(\alpha_n, h_n)$. Now by
direct calculation $\kappa \circ \eta^P S = \tau$ so that
$\kappa_X(\phi, \ell) = \tau_X(\alpha_n, h_n)$ represents the
judgement $\J(\alpha_n, h_n)$ of the free \textsc{gat} on $X$. But by
inspection, the image of $\J(\alpha_n, h_n)$ under $\theta$ is again
$(\gamma_n, (\alpha, h))$, as required.  For the case $\phi \neq
\gamma_n$ we proceed by induction on $n$. The case $n = 1$ is trivial;
so assume $n > 1$. As in the proof of Proposition~\ref{prop:wkendo},
we may find $m < n$ such that $m \notin \im \phi$ and form the
type-elements $(\res \phi m, \res \ell m)$ and $(\phi \!\setminus\! m,
\ell \!\setminus\!  m)$ of $PSX$. Now the judgement $\J(\phi, \ell)$
of the free $\{w,p\}$-\textsc{gat} on $SX$ is derivable by weakening
$\J(\phi \!\setminus\!  m, \ell \!\setminus\! m)$ with respect to
$\J(\res \phi m, \res \ell m)$ and $\alpha$-converting. It follows
that the judgement represented by $\kappa_X(\phi, \ell)$ is obtained
in the same way from the judgements represented by $\kappa_X(\res \phi
m, \res \ell m)$ and $\kappa_X(\phi \!\setminus\! m, \ell
\!\setminus\!  m)$. But by induction, these latter judgements are sent
to $(\res \phi m, \res \ell m)$ and $(\phi \!\setminus\! m, \ell
\!\setminus\! m)$ by $\theta$, and now by direct inspection of the
description of $\theta$ given above, we conclude that
$\theta(\kappa_X(\phi, \ell)) = (\phi, \ell)$, as required.

This completes the argument on type judgements; that on
term judgements is similar. The key point is that we may show by
induction on derivations that a typical term judgement $\J'$ of the
form~\eqref{eq:jweaken2} in the free \textsc{gat} on $X$ comprises a
type judgement as above with associated element $(\phi, (\alpha, h))$,
together with a term expression $t$ such that either:
\begin{enumerate}[(i)]
\item $  x_{j_1} : T_{j_1}, \dots, x_{j_{k-1}} : T_{j_{k-1}} \vdash t
  : T_n$ is derivable in the free $\{s\}$-\textsc{gat} on $X$; or
\item $t = x_i$ for some $i \in [n-1]$ such that $T_n = T_i$; i.e.,
  such that $\phi(n) = \phi(i)$, $\alpha_n = \alpha_i$ and $h_n = h_i$.
\end{enumerate}
We may thus assign to $[\J'] \in \Tm{TX}{[\J]}$ an element of
$\Tm{PSX}{\phi, (\alpha, h)}$, lying in the left- or right-hand
summand according as $t$ is of the form (i) or (ii). An
inductive argument like the one above now shows that this mapping is
inverse to the component $\Tm {PSX}{\phi, (\alpha, h)} \to \Tm
{TX}{\kappa_X(\phi, (\alpha, h))}$ of $\kappa_X$.
\end{proof}
By transporting the monad structure of $T$ along the
isomorphisms~\eqref{eq:compositeendo}, we thus obtain a monad
structure on $PS$. With respect to this structure, the maps
$\eta^P S \colon S \Rightarrow PS$ and $S \eta^P \colon P \Rightarrow
PS$ now become monad morphisms which in addition satisfy the ``middle unit law'' expressed by the
commutativity of:
\[
\cd[@!C]{
PS \ar@{=>}[rr]^{P \eta^S \eta^P S} \ar@{=>}[dr]_{1} & & PSPS \ar@{=>}[dl]^{\mu^{PS}} \\
& PS\rlap{ .}
}
\]
Henceforth, we shall take it that in fact $T = PS$.

\subsection{Unit and multiplication}
We now describe the unit and the multiplication of the monad for
\textsc{gat}s in terms of those for the weakening-and-projection and
substitution monads. The case of the unit is straightforward.
 \begin{Prop}
   For each $X \in [\H^\op, \cat{Set}]$, the unit map $\eta^{PS}_X \colon X
   \to PSX$ of the monad for \textsc{gat}s is the composite
\begin{align*}
X \xrightarrow{\eta^S_X} SX \xrightarrow{\eta^P_{SX}} PSX
\end{align*}
\end{Prop}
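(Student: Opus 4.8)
The plan is to leverage the identification $T = PS$ just effected by transporting the monad structure of $T$ along the isomorphisms $\kappa_X$ of~\eqref{eq:compositeendo}. Under any such transport the unit is forced: $\eta^{PS}_X$ must be the unique map with $\kappa_X \circ \eta^{PS}_X = \eta^T_X$. So the whole statement reduces to checking the single identity $\kappa_X \circ \eta^P_{SX} \circ \eta^S_X = \eta^T_X$, and I would deduce this from two facts already on hand.

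First, in the proof of the preceding Proposition it was observed ``by direct calculation'' that $\kappa \circ \eta^P S = \sigma$, where $\sigma \colon S \Rightarrow T$ is the comparison transformation exhibiting $S$ as a submonad of $T$; indeed, for $(\alpha, h) \in SX$ the element $\eta^P_{SX}(\alpha, h)$ is of the form $(\gamma_m, \widetilde{(\alpha, h)})$, and pushing this through the three constituents of $\kappa_X$ --- turning the inner $S$ into $T$, then the outer $P$ into $T$, then multiplying in $T$ --- collapses by the left unit law of $T$ to $\sigma_X(\alpha, h)$. Precomposing with $\eta^S_X$ then gives $\kappa_X \circ \eta^P_{SX} \circ \eta^S_X = \sigma_X \circ \eta^S_X$. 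Second, $\sigma$ is a morphism of monads (as already noted when it was introduced), hence preserves units, so $\sigma_X \circ \eta^S_X = \eta^T_X$. Composing the two equalities yields the required identity, and therefore $\eta^{PS}_X = \eta^P_{SX} \circ \eta^S_X$; since each map involved is natural in $X$, the equality holds at once on type- and on term-elements.

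I do not anticipate any genuine obstacle here. The only ingredient that is not purely formal is the identity $\kappa \circ \eta^P S = \sigma$, which has already been used in the text and is a one-line consequence of the monad axioms for $T$. If a more hands-on argument were preferred, one could instead unwind the explicit descriptions of $\eta^S$ (Proposition~\ref{prop:substunit}) and of $\eta^P = \theta \circ \eta^W$ (Propositions~\ref{prop:projunit} and~\ref{prop:weakunit}) together with the formula for $\kappa$, and check directly that both composites send $A \in X(n)$ to the element $(\gamma_n, (\alpha, h)) \in PSX(n)$ with $\alpha_i = \iota_i$ and $h_i = \widetilde{\partial^{\,n-i}A}$, which represents the basic judgement $\J_A$ of the free \textsc{gat} on $X$; but this is strictly more bookkeeping and far less transparent than the abstract route, so I would keep it only as a fallback.
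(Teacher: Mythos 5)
Your proof is correct and follows essentially the same route as the paper: the paper's one-line argument appeals to the fact that $\eta^P S \colon S \Rightarrow PS$ is a monad morphism for the structure transported along $\kappa$, and your check that $\kappa_X \circ \eta^P_{SX} \circ \eta^S_X = \eta^T_X$ (via $\kappa \circ \eta^P S = \sigma$ together with $\sigma \circ \eta^S = \eta^T$) is precisely the unit-preservation instance of that fact, so you have merely unpacked what the paper leaves implicit.
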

\begin{proof}
  An immediate consequence of the fact that $\eta^P S$ is a monad map.
\end{proof}

The multiplication $\mu^{PS}$ of the monad for \textsc{gat}s may be
described in terms of those of $S$ and $P$ together with one
additional datum: that of a \emph{distributive law} of $S$ over $P$ in
the sense of~\cite{Beck1969Distributive}. This is a natural
transformation $\delta \colon SP \Rightarrow PS$ satisfying four
axioms relating it to the units and multiplications of the monads $S$
and $P$. It may be obtained from  the multiplication $\mu^{PS}$ 
as the composite:
\begin{equation}
  \label{eq:5}
\delta = SP \xRightarrow{\eta^PSP\eta^S} PSPS \xRightarrow{\mu^{PS}} PS\rlap{ .}
\end{equation}
% Concretely, $\delta$ describes how instances of the substitution rule
% may be commuted upward in a proof-tree past instances of the weakening
% and projection rules.  
In a moment, we shall give an explicit
description of $\delta$, but first let us record how it allows us to
reconstruct the multiplication of $PS$:
\begin{Prop}
  For each $X \in [\H^\op, \cat{Set}]$, the multiplication 
  $\mu^{PS}_X \colon PSPSX \to PSX$ of the monad for \textsc{gat}s is the
  composite
\[
PSPSX \xrightarrow{P\delta_{SX}} PPSSX \xrightarrow{\mu^P \mu^S_X}
PSX\rlap{ .}
\]
\end{Prop}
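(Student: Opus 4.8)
The plan is to deduce this identity from the monad laws for $PS$ (transported along $\kappa$) together with the two facts already recorded above: that $\eta^P S \colon S \Rightarrow PS$ and $P\eta^S \colon P \Rightarrow PS$ are monad morphisms, and that the middle unit law $\mu^{PS} \circ (P\eta^S\eta^P S) = 1$ holds. This is precisely the computation underlying Beck's correspondence~\cite{Beck1969Distributive} between distributive laws and monad structures on a composite functor, so the argument is not specific to the combinatorics developed in this paper.

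Concretely, I would unfold the definition~\eqref{eq:5} of $\delta$ inside the claimed composite $\mu^P \mu^S_X \circ P\delta_{SX}$, producing an expression assembled from $\mu^{PS}$, the units $\eta^P$ and $\eta^S$, and the multiplications $\mu^P$ and $\mu^S$, each suitably whiskered by $P$ and $S$. The reduction then proceeds in four moves: (i) use the monad-morphism axiom for $P\eta^S$ to rewrite the $\mu^S$-part of $\mu^P\mu^S$ in terms of $\mu^{PS}$; (ii) use the monad-morphism axiom for $\eta^P S$ to do the same for the $\mu^P$-part; (iii) use naturality of $\mu^{PS}$ and of the units to migrate the residual unit-insertions into the position where the middle unit law applies and cancels them; and (iv) collapse the remaining iterated composite of $\mu^{PS}$'s by associativity and the unit laws of the monad $PS$, leaving exactly $\mu^{PS}_X$. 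The action on term-elements is governed by the same equational reasoning, now using the explicit term-level formulas for $\mu^P$, $\mu^S$ and $\delta$ (the last read off from~\eqref{eq:5}).

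The only real difficulty is bookkeeping: ensuring that each naturality square and each monad-morphism axiom is invoked at the correct object, since the intermediate expressions involve several nested whiskerings. I expect this is most transparent in string-diagram notation, where the monad-morphism axioms and the middle unit law become local rewrites and the collapse to $\mu^{PS}$ is visibly a finite sequence of them. As a sanity check, and an alternative to the explicit calculation, one may simply observe that $(PS, \eta^{PS}, \mu^{PS})$ equipped with the monad morphisms $P\eta^S$, $\eta^P S$ and the middle unit law is exactly a monad structure on $PS$ ``compatible'' with $P$ and $S$ in Beck's sense; that~\eqref{eq:5} is the standard recipe recovering the associated distributive law; and that the multiplication of any such compatible structure is forced to be $\mu^P\mu^S \circ P\delta S$. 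In particular, once the present identity is established, $\delta$ is automatically a genuine distributive law of $S$ over $P$.
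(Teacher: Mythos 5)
Your proposal is correct and is in substance the paper's own proof: the paper disposes of this proposition by citing (1) $\Leftrightarrow$ (2) of Beck's proposition on distributive laws, and your four-step computation (together with your closing observation that one may instead invoke Beck directly) is exactly an unfolding of that correspondence, using only the previously recorded facts that $\eta^P S$ and $P\eta^S$ are monad morphisms for the transported monad structure on $PS$ and satisfy the middle unit law. As you note, nothing specific to the combinatorics of $P$ and $S$ enters the argument.
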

\begin{proof}
  This is (1) $\Leftrightarrow$ (2) of~\cite[Proposition,
  Section~1]{Beck1969Distributive}.
\end{proof}
Since we already have explicit descriptions of $\mu^P$ and $\mu^S$,
this allows us to reduce the problem of giving an explicit description
of $\mu^{PS}$ to that of giving one for $\delta$.  Such a description
is essentially an account of how the process of substituting terms
into a weakened judgement may be re-expressed as the process of
weakening a judgement to which substitution has already been
applied. The behaviour is different depending on whether the terms we
are substituting in are genuine terms or are projections onto a
variable; those of the former kind induce actual substitutions, while
those of the latter express the structural rule of \emph{contraction}.
Our description of $\delta_X$ will thus come in two parts, the first
dealing only with actual substitutions, and the second reintroducing
contraction.

%To make this distinction , we first need an
%explicit description of $SPX$.
% We will thus build up our analysis in stages,
% first excluding contractions from the picture, and later reintroducing
% them
First we need an explicit description of $SPX$.  Given $\alpha \in
\mathrm{Inc}(n)$, a map $[\alpha] \to PX$ is determined as in the
discussion following Definition~\ref{def:inclist} by its value at
$\alpha(n)$ and its values at $i_t$ for each $i \in \ab{\alpha(n)}
\setminus \im \alpha$, thus by giving:
\begin{itemize}
\item A pair $(\phi \in \mathrm{Hp}(\alpha(n)), h \colon \ab \phi \to X) \in
PX(\alpha(n)) = WX(\alpha(n))$; and
\item Elements $k(i) \in \Tm {PX}{\res \phi i, \res h i}$ for each
  $i \in [\alpha(n)] \setminus \im \alpha$,
\end{itemize}
and so we write a typical element of $SPX(n)$ as $(\alpha, (\phi, h,
k))$. By Proposition~\ref{prop:projendo}, the set $\Tm
{PX}{\res \phi i, \res h i}$ which each $k(i)$ inhabits is
the disjoint union
\[
\Tm X {h(i)} + \{\pi_j : j \in [i-1], \phi(i) = \phi(j), h(i) =
h(j)\}\rlap{ ;}
\] 
we will call $(\alpha, (\phi, h, k))$ \emph{projection-free} if
each $k(i)$ lies in the left-hand summand.
% and \emph{term-free} if each $k(i)$ lies in the right-hand
% summand. While these two kinds of element do not exhaust all
% possible type-elements of $SPX$, we will see soon that they do, in a
% certain sense, generate them all.
We first describe the action of $\delta_X$ on projection-free elements.
\begin{Defn}\label{def:projfree}
  Let $(\alpha, (\phi, h, k)) \in SPX(n)$ be projection-free.  We
  define the heap $\alpha^\ast\phi \in \mathrm{Hp}(n)$ in relation
  form by
$i \hp_{\alpha^\ast\phi} j$ iff $\alpha(i) \hp_\phi
\alpha(j)$.
% (0) = 0$ and
% \[
% (\alpha^\ast\phi)(i) = \max\{j \in [0,i-1] \mid \alpha(j) \in
% \{\phi(\alpha(i)), \phi^2(\alpha(i)), \dots\}\}\rlap{ .}
% \]
Given $p \in [n]$ with $\dn_{\alpha^\ast \phi}(p) = \{p_1 \prec \dots
\prec p_m = p\}$, we define $\alpha^\phi_p \in \mathrm{Inc}(m)$ by $
\alpha^\phi_p(i) = \mathsf{dp}_\phi(\alpha(p_i))$.  If now
$\dn_\phi(\alpha(p)) = \{v_1 \prec \dots \prec v_{\ell} =
\alpha(p)\}$, then it is easy to see that $i \in [\ell]$ is in the
image of $\alpha^\phi_p$ just when $v_i$ is in the image of $\alpha$;
it follows that we have a well-defined mapping
% $0 < \ell_1 < \dots < \ell_m = j$ by be its successive images
% under $\alpha^\ast \phi$---so $\ell_i = (\alpha^\ast \phi)^{m-i}(j)$,
% and now define an inc-list $\alpha^\phi_j \in \mathrm{Inc}(m)$ by
% $\alpha^\phi_j(i) = \# \alpha(\ell_i)$ 
$(h+k)_p \colon [\alpha^\phi_p] \to X$ given by
\[
%\begin{cases}
(h+k)_p(i)= 
h(v_i) \quad \text{and} \quad (h+k)_p(i_t) = k(v_i)
%\end{cases}
\]
for $i \in [\ell]$ (on the left) and $i \in [\ell] \setminus \im
\alpha^\phi_p$ (on the right).  It is moreover easy to verify that
$\partial(\alpha^\phi_p, (h+k)_p) = (\alpha^\phi_q, (h+k)_q)$ whenever
$\alpha^\ast \phi(p) = q$, so that we have a well-defined mapping
$(\alpha^\phi, h + k) \colon \ab {\alpha^\ast \phi} \to SX$, and so an
element $(\alpha^\ast \phi, (\alpha^\phi, h+k)) \in PSX(n)$.
\end{Defn}

% In terms of forests, $\alpha^\ast \phi$ is the forest obtained from
% $\phi$ as follows. Let $\ell$ traverse $[\alpha(n)] \setminus \im
% \alpha$ in descending order; at each step, remove the node $\ell$ and
% every edge with source $\ell$, and replace every edge with target
% $\ell$ by one with target $\phi(\ell)$ (or remove it if $\phi(\ell) =
% 0$). Finally, renumber the remaining nodes $\{\alpha(1), \dots,
% \alpha(n)\}$ as $\{1, \dots, n\}$. The inc-list $\alpha^\phi_j$
% associated to some $j \in [n]$ is obtained by forming in $\alpha^\ast
% \phi$ the unique path $j = \ell_m \to \dots \to \ell_1$ from $j$ to a
% root vertex, and associating to each node on that path the depth of
% the corresponding node $\alpha(\ell_i)$ of $\phi$.
% For $i \notin \im \alpha$, the fibre over $\ell(i)$ contains a single
% element in the left-hand summand, and in the right-hand summand
% contains all $j \in [i-1]$ such that $\res \phi {[0,i]} = \wk_j (\res
% \phi {[0,i-1]}$, i.e., such that $\phi(i) = \phi(j)$.
% So a map $\ab \alpha \to P1$ is given by a heap $\phi \in
% \mathrm{Hp}(\alpha(n))$, together with, for each $i \notin \im
% \alpha$, a choice either of 
% Fibre over that is $i$
\begin{Ex}
If $\alpha$ is the inc-list $0 < 1 < 5 < 6 < 7 < 8$ and
$\phi$ is as on the left below, then $\alpha^\ast \phi$ and
$\alpha^\phi$ are as on the right.
\[
  \phi = \!\! \cd[@-1em]{
4 \ar[dr] & & 5 \ar[dl] & 8 \ar[d] \\ &
2 \ar[d] & 7 \ar[dl] & 6 \ar[d]\\ & 
1 & & 3
} \qquad
\alpha^\ast\phi =\!\!\!\! \cd[@-1em]{
& 2 \ar[ddl] & 5 \ar[d] \\ 
 & 4 \ar[dl] & 3\\ 
1
} \qquad
\alpha^\phi = \!\!\!\!\!\!\cd[@-1em@C-0.2em]{
& (1<3) \ar[ddl] & (2<3) \ar[d] \\ 
 & (1<2) \ar[dl] & (2) \\ 
(1)
}
\]  
\end{Ex}

We now wish to describe the action of $\delta_X$ on arbitrary
type-elements. As a first step, let us call $(\alpha, (\phi, h, k))
\in SPX(n)$ \emph{nearly projection-free} if the only terms $k(i)$
which are projections are ones for which $i$ is not in the image of
$\phi$ (thus leaves in the forest corresponding to $\phi$). For such
an element, we can still define $(\alpha^\ast \phi, (\alpha^\phi,
h+k))$ as above; the point which requires checking is that, for $p \in
[n]$ with $\dn_\phi(\alpha(p)) = \{v_1 \prec \dots \prec v_{\ell} =
\alpha(p)\}$ and $i \in [\ell] \setminus \im \alpha^\phi_p$, the
element $k(v_i)$ should be a term of $X$ rather than a projection. But
this is true since we necessarily have $i < \ell$, so that $v_i$ is in
the image of $\phi$ and thus a term of $X$ by assumption.  With this
observation in mind, we may now extend our description of the action
of $\delta_X$ to deal with arbitrary type-elements.
\begin{Defn}\label{def:termfree}
  Given a general element $(\alpha, (\phi, h, k)) \in SPX(n)$, 
% The \emph{associated
%     projection-free element} $(\bar \alpha, (\bar \phi, \bar h, \bar
%   k))$ is defined as follows.  $\bar \alpha$ is the inc-list whose
%   values $0 = \bar \alpha(0) < \bar \alpha(1) < \dots < \bar \alpha(m)
%   = \alpha(n)$ are an enumeration of the set 
%   $\{i \in [\alpha(n)] : i \in \im \alpha \text{ or } i \notin \im
%   \alpha \text{ and } k(i) \text{ a projection.}\}$. 
let $\leq_k$ be the partial
  order generated on $[0,\alpha(n)]$ by the basic inequalities:
\[
\text{$i \leq_k j$ when $j \notin \im \alpha$ and $k(j) = \pi_i$ ,}\]
and let $\bar \phi \in \mathrm{Hp}(\alpha(n))$ be given by $\bar
\phi(i) = \min\{j : j \leq_k \phi(i)\}$. Note that $i \leq_k j$
implies $\phi(i) = \phi(j)$ and $h(i) = h(j)$, which means that
$\mathsf{dp}_\phi(i) = \mathsf{dp}_{\bar \phi}(i)$ for all $i \in
[\alpha(n)]$ and, if $\bar \phi(i) > 0$, that $\partial h(i) = h(\bar
\phi(i))$. Thus $(\alpha, (\bar \phi, h, k))$ is a well-defined
element of $SPX(n)$ which is easily seen to be nearly projection-free, so
that we may form $(\alpha^\ast {\bar \phi},
(\alpha^{\bar \phi}, h + k)) \in PSX(n)$.

\end{Defn}
% \begin{Lemma}\label{lem:cruciallemma}
%   If $(\alpha, (\phi, h,k)) \in PSX(n)$ and $i \in [\alpha(n)]$,
%   then  $\mathsf{dp}_\phi(i) = \mathsf{dp}_{\bar
%     \phi}(i)$. If moreover $\bar \phi(i) = j$, then $\partial h(i) = h(j)$.
%   % moreover, if $(\alpha^\ast\bar \phi)(i) = j > 0$, then $\partial
%   % h(\alpha(i)) = h(\alpha(j))$.
% \end{Lemma}
% \begin{proof}
%   The first sentence holds by induction, 
%   using the observation that $i \leq_k j$
%   implies $\phi(i) = \phi(j)$. The final
%   clause follows on observing that if $i \leq_k j$ with $i, j > 0$,
%   then $h(i) = h(j)$.
% \end{proof}

This completes our description of the action of $\delta_X$ on
type-elements; we now prove its validity at the same time as giving
the action on term-elements. For the latter, let us note that we
have $\Tm {SPX}{\alpha, (\phi, h, k)} = \Tm {PX}{\phi, h}$ which
by Proposition~\ref{prop:projendo} again is the disjoint union
\[
\Tm X {h(\alpha(n))} + \{\pi_j : j \in
[\alpha(n) - 1], \phi(\alpha(n)) = \phi(j), h(\alpha(n)) =
h(j)\}\rlap{ .}\]
 \begin{Prop}\label{prop:distrib}
   For each $X \in [\H^\op, \cat{Set}]$, the action of the
   distributive law $\delta_X$ is given on type-elements $SPX(n) \to PSX(n)$
   by
\begin{align*}
(\alpha, (\phi, h,k)) & \mapsto (\alpha^\ast\bar\phi, (\alpha^{\bar\phi},
h+k)) \rlap{ ,}
\end{align*}
and on term-elements
 $\Tm {SPX} {\alpha, (\phi, h, k)} \to \Tm {PSX} {\alpha^\ast
    \bar \phi, (\alpha^{\phi}, h + k)}$ by
\[
a \mapsto
\begin{cases}
a & \text{if $a \in \Tm X {h(\alpha(n))}$;} \\
\pi_m & \text{if $a = \pi_j$ and $\min\{i : i \leq_k j\} = \alpha(m)$;}\\
k(m) & \text{if $a = \pi_j$ and $m = \min\{i : i \leq_k j\} \notin \im
    \alpha$.}
\end{cases}
\]
\end{Prop}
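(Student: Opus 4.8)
The plan is to follow the template used for Propositions~\ref{prop:substmult}, \ref{prop:weakmult} and~\ref{prop:projmult}: unwind the definition~\eqref{eq:5} of $\delta$, use that $\mu^{PS}$---being (conjugate to) the image under $V$ of the counit interpretation of $T = PS$---preserves derivations, and then induct. Concretely, an element $(\alpha, (\phi, h, k)) \in SPX(n)$ represents a type judgement $\J$ of the free $\{s\}$-\textsc{gat} on $PX$; chasing it through the units and then $\mu^{PS}_X$ produces the judgement of the free \textsc{gat} on $X$ obtained by substituting together, in the pattern prescribed by $\alpha$, the weakened-and-projected basic judgements of $X$ recorded by $h$ and $k$. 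The upshot is that $\delta_X$ sends a judgement derived by substituting $\J_2$ into $\J_1$ to the judgement obtained by substituting $\delta_X(\J_2)$ into $\delta_X(\J_1)$, and sends an $\eta^S_{PX}$-image to the corresponding $P\eta^S_X$-image (a distributive-law axiom). My task is to identify the resulting element of $PSX(n)$ with $(\alpha^\ast\bar\phi, (\alpha^{\bar\phi}, h+k))$, and likewise on term-elements.

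I would first treat the \emph{projection-free} case of Definition~\ref{def:projfree}, arguing by induction on $\alpha(n) - n$. In the base case $\alpha(n) = n$ we have $\alpha = \iota_n$, so no substitution occurs and $(\alpha, (\phi,h,k))$ lies in the image of $\eta^S_{PX}$; a direct check gives $\alpha^\ast\phi = \phi$ with $(\alpha^\phi, h + k)$ reconstituting $(\phi, h)$, matching the restriction of $\delta_X$ to that image, namely $P\eta^S_X$. For the inductive step, with $\alpha(n) > n$, choose $m < n$ and $j$ with $\alpha(m) < j < \alpha(m+1)$ and split $\alpha$ into $\alpha_j \in \mathrm{Inc}(n+1)$ and $\alpha^j \in \mathrm{Inc}(m+1)$ exactly as in the proof of Proposition~\ref{prop:substendo}, so that $\J$ is derived by substituting the term judgement indexed by $\alpha^j$ into the type judgement indexed by $\alpha_j$, both of strictly smaller substitution-complexity. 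Since $\delta_X$ respects this substitution, $\delta_X(\alpha, (\phi, h, k))$ is the result of substituting $\delta_X$ of the two pieces, which by the inductive hypothesis are the claimed expressions; it then remains to check---by a direct calculation with the explicit formula for $\mu^S$ from Proposition~\ref{prop:substmult}, using that $i \hp_{\alpha^\ast\phi} j$ iff $\alpha(i) \hp_\phi \alpha(j)$ and that $\alpha^\phi_p$ records the depths $\mathsf{dp}_\phi(\alpha(p_i))$---that substituting these pieces together yields $(\alpha^\ast\phi, (\alpha^\phi, h + k))$. The term-element case runs in the same induction.

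Next, the remark preceding Definition~\ref{def:termfree} shows that the identical argument covers the \emph{nearly projection-free} case: the projection-freeness hypothesis was used only to ensure that the entries of $(h+k)_p$ at positions $i < \ell$ are genuine terms, which holds there too since those positions lie in $\im\phi$. Finally, for a general $(\alpha, (\phi, h, k))$ I would reduce to the nearly-projection-free case via $(\alpha, (\bar\phi, h, k))$ of Definition~\ref{def:termfree}: in the free \textsc{gat} on $X$, substituting a term-entry $k(j) = \pi_i$ is an instance of the structural rule of contraction, which identifies the variable at position $j$ with the one at position $i$; iterating, a dependency originally aimed at $\phi(i)$ is redirected to $\min\{\, l : l \leq_k \phi(i)\,\} = \bar\phi(i)$, so that $\J(\alpha, (\phi, h, k))$ becomes $\J(\alpha, (\bar\phi, h, k))$, to which the previous case applies. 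Tracking term-elements through this reduction yields the three cases of the statement, according to whether a projection $\pi_j$ has ultimate target $\min\{\, i : i \leq_k j\,\}$ equal to some $\alpha(m)$ (giving $\pi_m$), outside $\im\alpha$ (giving the term $k(m)$), or the input already lay in $\Tm X {h(\alpha(n))}$ and is carried over unchanged, its derivation being projection-free.

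I expect the main obstacle to be the bookkeeping in this last reduction: making precise the sense in which $\J(\alpha, (\phi, h, k))$ and $\J(\alpha, (\bar\phi, h, k))$ denote the same judgement after contraction, and verifying that $\leq_k$ together with $\bar\phi(i) = \min\{\, j : j \leq_k \phi(i)\,\}$ captures exactly the effect of iterated contraction on the heap---in particular re-deriving the compatibility $\partial(\alpha^{\bar\phi}_p, (h+k)_p) = (\alpha^{\bar\phi}_q, (h+k)_q)$ whenever $\alpha^\ast\bar\phi(p) = q$, needed for $(\alpha^{\bar\phi}, h+k)$ to be a well-defined map $\ab{\alpha^\ast\bar\phi} \to SX$. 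By contrast, the projection-free core is, though combinatorially intricate, a routine instance of the derivation-preservation-plus-induction pattern already exercised several times above.
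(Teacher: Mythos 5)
Your overall template---use that $\delta_X$ is built from $\mu^{PS}$, which preserves derivations, and induct on $\alpha(n)-n$ after splitting $\alpha$ into $\alpha_j$ and $\alpha^j$ as in Proposition~\ref{prop:substendo}---is the paper's template, and your base case is right. But your three-phase organisation (projection-free, then nearly projection-free, then a ``contraction'' reduction to handle the general case) has a genuine gap at its centre. You assert that, by iterated contraction, $\J(\alpha,(\phi,h,k))$ ``becomes'' $\J(\alpha,(\bar\phi,h,k))$ and hence that the general case reduces to the nearly projection-free one. These are two \emph{distinct} elements of $SPX(n)$, i.e.\ two distinct judgements of the free $\{s\}$-\textsc{gat} on $PX$ (their $PX$-labels carry different weakening data), and by Proposition~\ref{prop:commonanalysis} no rule of that free theory identifies them; there is no primitive contraction rule anywhere in Table~\ref{fig1}. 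What is true is that their images under $\delta_X$ coincide---but that is precisely the content of the proposition in the presence of projection entries, so invoking it as a reduction is circular. To make it non-circular you would have to compute, in the free \textsc{gat} on $X$, what substituting a projection term $\pi_j$ into a judgement represented by $(\phi,(\alpha,h))\in PSX$ does to the representative: namely redirect every dependency $\phi(i)=m$ to $j$ and delete slot $m$. This is exactly the preliminary ``observation'' the paper proves (by inspecting the bijection $\kappa_X$ of Section~\ref{sec:combining}) and then uses inside a \emph{single} induction covering arbitrary elements, letting the $\bar\phi$/$\leq_k$ bookkeeping fall out of the inductive step according to whether the split-off entry $k(j)$ is a projection or not. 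You never establish this computation, and you flag it yourself as ``the main obstacle''---it is not bookkeeping, it is the key lemma.

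A second, related problem: even in your projection-free inductive step, the ``direct calculation with the explicit formula for $\mu^S$ from Proposition~\ref{prop:substmult}'' is not available as stated. After applying the inductive hypothesis, the two pieces live in $PSX$ and must be substituted together in the free \textsc{gat} on $X$; the effect of that substitution on $PSX$-representatives is governed by $\mu^{PS}$ (equivalently by $\kappa_X$), not by $\mu^S$ alone, and requires the non-projection branch of the same observation (the formula with $\epsilon_\ell$ and $h_i\cup a$ modifying both the heap and the inc-lists). Likewise, the three-case formula on term-elements---in particular $\pi_j\mapsto k(m)$ when the $\leq_k$-minimum lies outside $\im\alpha$---needs the term-level extension of that observation (the four-case formula for the substituted term), which you only gesture at. So the missing ingredient in both phases of your argument is the same explicit substitution computation; once it is proved, the cleanest route is the paper's single induction rather than the projection-free/contraction split.
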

\begin{proof} We prove the result for types and terms simultaneously
  by induction on $\alpha(n) - n$. For the base case $\alpha(n) = n$,
  we must have $\alpha = \iota_n$, and now on type-elements we have
  $\delta_X(\iota_n, (\phi, h, k)) = \delta_X(\eta^S_{PX}(\phi, h)) =
  P\eta^S_X(\phi, h) = (\phi, (\iota, \tilde h))$, which by direct
  calculation from the definitions is equal to $(\iota_n^\ast {\bar\phi},
  ({\iota_n}^{\bar\phi}, h+k))$. The argument for terms in the base
  case is similarly straightforward, on observing that $\leq_k$ in
  this case satisfies $i \leq_k j$ iff $i = j$.

  Before giving the inductive step, we make an observation.  Suppose
  given $(\phi, (\alpha, h)) \in PSX(n)$ and $t \in \Tm {PSX} {\res
    \phi m, \res{(\alpha, h)}{m}}$. We wish to describe the judgement
  obtained by substituting $\J(\res \phi m, \res{(\alpha, h)}{m}, t)$
  into $\J(\phi, (\alpha, h))$ in the free \textsc{gat} on $X$. If $t
  = \pi_j$ is a projection term, then direct inspection of the
  bijection of Proposition~\ref{eq:compositeendo} shows that the
  judgement obtained is $\J(\phi' \setminus m, (\alpha, h)\setminus
  m)$, where $\phi' \in \mathrm{Hp}(n)$ is defined by $\phi'(i) = j$
  if $\phi(i) = m$ and $\phi'(i) = \phi(i)$ otherwise. On the other
  hand, if $t = a \in \Tm{X}{h_m(\alpha_m(\# m))}$, inspection of
  Proposition~\ref{eq:compositeendo} shows that this substitution
  yields $\J(\phi' \!\setminus\! m, (\alpha', h')\!\setminus\! m)$,
  with $\phi' \in \mathrm{Hp}(n)$ and $(\alpha', h') \colon \ab{\phi'}
  \to SX$ given by
\begin{equation*}
\phi'(i) = \begin{cases}
\phi^2(i) & \text{if }\phi(i) = m\rlap{;}\\
\phi(i) & \text{otherwise; }\\
\end{cases}
\quad
(\alpha'_i, h'_i) = \begin{cases}
(\alpha_i \epsilon_\ell, h_i \cup a) & \text{if }\phi^{\# i - \ell}(i) = m\rlap{;}\\
(\alpha_i, h_i) & \text{otherwise.}
\end{cases}
\end{equation*}
Here we write $\epsilon_\ell \colon [0, n] \to [0, n+1]$ for the
unique injection whose image does not include $\ell$, and, as in the
proof of Proposition~\ref{prop:substendo}, write $h_i \cup a \colon
\ab{\alpha_i \epsilon_\ell} \to X$ for the map which extends $h_i
\colon \ab {\alpha_i} \to X$ by sending $(\alpha_i(\ell))_t$ to $a$.

We now give the inductive step of our main argument. Let $(\alpha,
\ell) = (\alpha, (\phi, h, k)) \in SPX(n)$ with $\alpha(n) > n$.  As
in Proposition~\ref{prop:substendo}, we can find $\alpha(m) < j <
\alpha(m+1)$ and form the type-elements $(\alpha_j, \ell_j) $ and
$(\alpha^j, \ell^j)$ and term-element $k(j) \in \Tm {SPX} {\alpha^j,
  \ell^j}$.  Now we have that $\ell_j = (\phi, h, k') \colon
\ab{\alpha_j} \to X$, where $k'$ is obtained from $k$ by removing the
value at $j$; and we have that $\ell^j = \partial^{n-m-1}(\ell_j)$. So
by induction, applying $\delta_X$ to $(\alpha_j, \ell_j) $ and
$(\alpha^j, \ell^j, k(j))$ yields the elements
  \begin{gather*}
    (\alpha_j{}^*\bar\phi, (\alpha_j{}^{\bar\phi}, h + k')) \quad 
\text{and } \quad
    (\res{\alpha_j{}^*\bar\phi}{m+1}, \res{(\alpha_j{}^{\bar\phi}, h + k')}{m+1},
    k(j))
  \end{gather*}
  of $PSX$.  The judgement $\J(\alpha, \ell)$ of the free
  $\{s\}$-\textsc{gat} on $PX$ is obtained by substituting
  $\J(\alpha^j, \ell^j, k(j))$ into $\J(\alpha_j, \ell_j)$, whence
  $\delta_X(\alpha, \ell)$ is obtained by substituting
  $\J(\res{\alpha_j{}^*\phi}{m+1}, \res{(\alpha_j{}^\phi, h +
    k')}{m+1}, k(j))$ into $\J(\alpha_j{}^*\phi, (\alpha_j{}^\phi, h +
  k'))$ in the free \textsc{gat} on $X$. Now $k(j)$ is either a
  projection or non-projection; applying the appropriate part of the
  above observation and calculating shows that, in either case, the
  resultant judgement is $\J(\alpha^\ast \bar\phi, \alpha^{\bar\phi},
  h+k)$, so that $\delta_X(\alpha, \ell) = (\alpha^\ast \bar\phi,
  \alpha^{\bar\phi}, h + k)$.

  Finally, we give the inductive step on term-elements. The key point
  is for us to extend the observation made above. Given $(\phi,
  (\alpha, h)) \in PSX(n)$ and $t \in \Tm{PSX}{\res \phi m, \res
    {(\alpha, h)} m}$ as before and also $a \in \Tm{PSX}{\phi,
    (\alpha, h)}$, we wish to describe the result of substituting
  $\J(\res \phi m, \res {(\alpha, h)} m, t)$ into $\J(\phi, (\alpha,
  h), a)$. If $v$ denotes the term-element representing this
  judgement, then direct calculation shows that
\begin{align*}
v = \begin{cases}
a & \text{if $a \in \Tm X {h_n(\alpha_n(\# n))}$;} \\
\pi_j & \text{if $a = \pi_j$ and $j < m$;}\\
t & \text{if $a = \pi_m$;}\\
\pi_{j-1} & \text{if $a = \pi_j$ and $j > m$;}
\end{cases}
  % u \in \Tm X {h_n(\alpha_n(\# n))} \quad &\Rightarrow \quad v = u\\
  % u = \pi_k \text{ and $k < m$}\quad&\Rightarrow \quad v = \pi_k\\
  % u = \pi_m \quad&\Rightarrow \quad v = t\\
  % u = \pi_k \text{ and $k > m$}\quad&\Rightarrow \quad v =
  % \pi_{k-1}\rlap{ .}
\end{align*}
Applying this observation together with induction and the preservation
of derivations by $\delta_X$ now yields the inductive step on
terms. The details are similar to the type-element case and so omitted.
% $\pi_j \in
%   \Tm{SPX}{\alpha, (\phi, h, k)}$. For the case where $j = \alpha(m)$
%   for some $m \in [n]$, induction as above on $\alpha(n) - n$ shows
%   that $\delta_X$ sends this element to $\pi_m$. There remains only
%   the case where $j \notin \im \alpha$, with $\alpha(m) < j <
%   \alpha(m+1)$, say. The term-judgement $\J(\alpha, \ell, \pi_j)$ of
%   the free $\{s\}$-\textsc{gat} on $PX$ can be obtained by
%   substituting $\J(\alpha^j, \ell^j, k(j))$ into $\J(\alpha_j, \ell_j,
%   \pi_j)$; since $\delta_X(\alpha^j, \ell^j, k(j))$ is as displayed
%   above, and $\delta_X(\alpha_j, \ell_j, \pi_j) = \pi_{m+1}$ by the
%   preceding case, we conclude that $\delta_X(\alpha, \ell, \pi_j) =
%   k(j)$ as required.
% (\res \psi m
% \star \res \phi m,\, \res \psi m \star \res h m)
% =(\res{(\psi \star \phi)}{m}, \res{(\psi \star
%   h)}{m})\\
% \text{and }
% \mu_X(\psi \!\setminus\! m, k
% \!\setminus\! m) = (\psi
% \!\setminus\! m \star \phi \!\setminus\! m,\, \psi \!\setminus\! m 
% \star h \!\setminus\! m) = ((\psi \star \phi) \!\setminus\! m, (\psi \star h)
% \!\setminus\! m)\rlap{ ;}
%\end{gather*}
\end{proof}

Drawing together the results of the previous four sections, we thus
obtain the main result of the paper, giving a complete
characterisation of the monad for \textsc{gat}s on the presheaf
category $[\H^\op, \cat{Cat}]$.

\begin{Thm}
  The monad for \textsc{gat}s induced by the free-forgetful adjunction
  $\cat{GAT} \leftrightarrows [\H^\op, \cat{Set}]$ may be taken to
  have underlying endofunctor $PS$, where $P$ and $S$ are as in
  Propositions~\ref{prop:projendo} and \ref{prop:substendo}; 
  unit map at $X \in [\H^\op, \cat{Set}]$ given by $\eta^P
  \eta^S_X \colon X \to PSX$, where $\eta^P$ and $\eta^S$ are as in
  Propositions~\ref{prop:projunit} and~\ref{prop:substunit}; and 
  multiplication map at $X \in [\H^\op, \cat{Set}]$ given by the composite
\[
PSPSX \xrightarrow{P\delta_{SX}} PPSSX \xrightarrow{\mu^P \mu^S_X}
PSX\rlap{ ,}
\]
where $\mu^P$, $\mu^S$ and $\delta$ are as in
Propositions~\ref{prop:projmult}, \ref{prop:substmult},
and~\ref{prop:distrib} respectively.
\end{Thm}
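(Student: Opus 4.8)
The statement collates the results of Sections~\ref{sec:weaken}--\ref{sec:combining}, so the plan is simply to assemble them. The starting point is the proposition showing that the comparison map $\kappa_X$ of~\eqref{eq:compositeendo} is invertible and natural in $X$: transporting the monad structure of the \textsc{gat}-monad $T = V_{\{w,p,s\}}G_{\{w,p,s\}}$ across these isomorphisms produces, by construction, a monad whose underlying endofunctor is literally $PS$. Its action on type-elements and boundary maps is read off by composing the descriptions of Propositions~\ref{prop:projendo} and~\ref{prop:substendo}, and its action on term-elements from~\eqref{eq:psxterm}. This is precisely the endofunctor named in the theorem, and it is with respect to this transported structure that the remaining two clauses are to be understood.

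For the unit, the transport carries $\eta^T$ to $\eta^{PS}$, and the proposition stating that $\eta^{PS}_X = \eta^P_{SX}\circ\eta^S_X$ --- a consequence of $\eta^P S$ being a monad morphism --- reduces the task to substituting the component formulas of $\eta^S$ from Proposition~\ref{prop:substunit} and of $\eta^P$ from Proposition~\ref{prop:projunit}; the result is exactly the map $\eta^P\eta^S_X$ claimed. For the multiplication, I would invoke the Beck reformulation recorded in the excerpt: the transported monad structure on $PS$ amounts to giving a distributive law $\delta\colon SP\Rightarrow PS$, recovered from $\mu^{PS}$ by~\eqref{eq:5}, with $\mu^{PS}_X = (\mu^P\mu^S_X)\circ P\delta_{SX}$. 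Proposition~\ref{prop:distrib} supplies $\delta$ explicitly, and feeding in the multiplications $\mu^P$ of Proposition~\ref{prop:projmult} and $\mu^S$ of Proposition~\ref{prop:substmult} yields exactly the composite written in the theorem. The proof proper is thus a one- or two-sentence collation: for each of the endofunctor, the unit, and the multiplication, pair the abstract structural identity with the concrete formula.

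The only genuinely substantive ingredient behind this tidy statement is the fact, established in the paragraphs leading up to the theorem, that $\eta^P S\colon S\Rightarrow PS$ and $P\eta^S\colon P\Rightarrow PS$ are monad morphisms into the transported structure and satisfy the middle unit law; this is what licenses the Beck reformulation, and hence the existence and use of $\delta$. Since it has already been checked, no obstacle remains. What would otherwise be the hard part of a theorem of this shape --- verifying the four distributive-law axioms for an explicitly posited $\delta$ --- is bypassed here, since $\delta$ is extracted from the already-constructed monad $T$ rather than guessed.
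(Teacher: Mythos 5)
Your proposal is correct and follows the paper's own route exactly: transport the monad structure of $T$ along the isomorphism $\kappa_X$ of~\eqref{eq:compositeendo}, read off the unit from the fact that $\eta^P S$ is a monad morphism, and recover the multiplication from $\delta$ via Beck's Proposition, with the explicit formulas supplied by Propositions~\ref{prop:projendo}--\ref{prop:distrib}. Nothing is missing; the theorem is, as you say, a collation of these already-established facts.
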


\section{Categorical analysis}\label{sec:final}
We have now completed the main task of the paper by describing the
monads for $D$-\textsc{gat}s for $D = \{w\}$, $\{w, p\}$, $\{s\}$ and
$\{w,p,s\}$. The purpose of this final section is to discuss the good
categorical properties that these monads have: namely, those of being
\emph{local right adjoint} and \emph{cartesian}. These properties
justify us in regarding these monads as fundamentally combinatorial in
nature, and will allow us, in future work, to bring a rich body of
theory~\cite{Weber2004Generic,Leinster2004Operads,Weber2007Familial,Berger2012Monads}
to bear on the study of dependent sequent calculi. Let us begin by
briefly sketching some of these applications:
\begin{itemize}
\item \textbf{Nerve functors}. Weber's ``nerve
  theorem''~\cite{Weber2007Familial} allows us to associate to any
  local right adjoint, cartesian monad on a presheaf category a
  \emph{nerve functor} $T\text-\cat{Alg} \to [\E^\op, \cat{Cat}]$: a
  fully faithful embedding of the category of $T$-algebras into a
  presheaf category, together with a characterisation of the essential
  image of this functor. The importance of this is that a nerve
  functor can allow algebraic entities to be embedded into a geometric
  or topological context; hence this will allow us to explore
  geometric and higher-dimensional aspects of dependent sequent calculi.
\vskip0.5\baselineskip
\item \textbf{Categorical algebras}. Any local right adjoint monad $T
  \colon \C \to \C$ preserves pullbacks, and so lifts to a $2$-monad
  on the $2$-category $\cat{Cat}(\C)$ of categories internal to $\C$;
  an algebra for this lifted monad may be called a \emph{categorical
    $T$-algebra}. In particular, this means that we can consider
  ``categorical $D$-\textsc{gat}s''. The value of this is in allowing
  a new approach to the coherence problem
  of~\cite{Hofmann1995On-the-interpretation}, that many naturally
  occurring models of dependent type theory are ``too weak'' to be
  strict models of the syntax. This is resolved by observing that
  these models are actually \emph{categorical pseudoalgebras} for the
  lifted $2$-monad. By considering pseudomorphisms of algebras, we may
  perfectly well interpret the strict syntax in these weak models,
  thereby avoiding the use of strictifiction theorems. Among the
  categorical pseudoalgebras, we also find objects which represent the
  ``syntax with substitution up to isomorphism''
  of~\cite{Curien1993Substitution}; and now the two-dimensional monad
  theory of~\cite{Blackwell1989Two-dimensional} describes the relation
  between the strict and the weak syntax.\vskip0.5\baselineskip
\item \textbf{Lax morphisms}. As is well known, lax monoidal functors
  $1 \to \V$ from the terminal monoidal category classify monoids in
  $\V$. In a similar way, if $\mathbb T$ is a categorical
  $D$-\textsc{gat}, then lax morphisms of $D$-\textsc{gat}s $1 \to
  \mathbb T$ correspond to models of type theory \emph{internal} to
  $\mathbb T$. In particular, one may generate the \emph{free
    categorical $D$-\textsc{gat} containing a $D$-\textsc{gat}}. This
  should be a fundamental combinatorial object, by analogy with the
  case of monoidal categories, where the corresponding entity is
  $\Delta_+$, the category of finite ordinals and monotone
  maps.
\end{itemize}
Investigating these ideas fully will be a paper in itself; for now, we
merely show that the monads under investigation are indeed local
right adjoint and cartesian.
\subsection{Local right adjoint and (strongly) cartesian monads}
\label{sec:local-right-adjoint}
We begin by revising the
notions of interest;
see~\cite{Carboni1995Connected,Leinster2004Higher,Weber2004Generic,Weber2007Familial,Berger2012Monads}
for further discussion and applications.

\begin{Defn}
  A functor $F \colon \C \to \D$ is called \emph{local right adjoint}
  if, for each $X \in \C$, the functor $F / X \colon \C / X \to \D /
  FX$ on slice categories is a right adjoint. A monad $T$ is called
  \emph{local right adjoint} when its underlying endofunctor is so.
\end{Defn}
By standard pasting properties of pullbacks, if $\C$ has a terminal
object then a functor $F \colon \C \to \D$ is local right adjoint just
when $F / 1 \colon \C / 1 \to \D / F1$ is a right adjoint. Such an $F$
is thus determined by its value $F1$ at the terminal object together
with a functor $G_1 \colon \D / F1 \to \C$ left adjoint to $F/1$.  In
the case $\C = \D = [\H^\op, \cat{Set}]$ of interest to us, a standard
categorical argument shows that giving the left adjoint $G_1$ is
equivalent to giving an arbitrary functor $[\thg] \colon
\mathrm{el}(F1) \to [\H^\op, \cat{Set}]$. Here $\mathrm{el}(F1)$ is
the \emph{category of elements} of $F1$, whose object set is
$\Sigma_{h \in \H} F1(h)$, and whose morphisms $(x \in F1(h)) \to (x'
\in F1(h'))$ are maps $f \colon h \to h'$ such that $x =
(F1)(f)(x')$. Given $F1$ and $[\thg]$, we can reconstruct $F$ from it
by the formula
\begin{equation}\label{eq:freconstruct}
FX(h) \cong \sum_{x \in F1(h)} [\H^\op, \cat{Set}](\ab {x}, X)\rlap{ .}
\end{equation}
This expresses that an element of $FX(h)$ is an element $x$
of $F1(h)$ together with an appropriate labelling $\ab x \to X$ by
elements of $X$. Thus elements of $F1$ can be seen as encoding the
``shapes'' of the operations appearing in the functor $F$.

Note that the formula~\eqref{eq:freconstruct} expresses the functor
$F(\thg)(h) \colon [\H^\op, \cat{Set}] \to \cat{Set}$ as a coproduct
of representables for each $h \in \H$. This provides an alternative
characterisation of the local right adjoint endofunctors of $[\H^\op,
\cat{Set}]$, and we record this result as:
\begin{Prop}\label{prop:lra}
  For an endofunctor $F$ of $[\H^\op, \cat{Set}]$, the following are equivalent:
  \begin{enumerate}[(i)]
  \item $F$ is local right adjoint;
  \item $F / 1 \colon [\H^\op, \cat{Set}] \to [\H^\op, \cat{Set}]/ F1$ is a right adjoint;
  \item There are given $F1$ and $[\thg] \colon \mathrm{el}(F1) \to
    [\H^\op, \cat{Set}]$ such that~\eqref{eq:freconstruct} is validated;
  \item For each $h \in \H$, the functor $F(\thg)(h) \colon [\H^\op,
    \cat{Set}] \to
    \cat{Set}$ is a coproduct of representable functors;
  \item $F$ preserves connected limits (i.e., all small fibre products
    and equalisers).
  \end{enumerate}
\end{Prop}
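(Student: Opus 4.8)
The plan is to prove $(i)\Leftrightarrow(ii)$, $(ii)\Leftrightarrow(iii)$, $(iii)\Leftrightarrow(iv)$, and $(ii)\Rightarrow(v)\Rightarrow(iv)$; this closes the loop. The equivalence $(i)\Leftrightarrow(ii)$ is exactly the observation recalled just before the statement: since $[\H^\op,\cat{Set}]$ has a terminal object, one has $[\H^\op,\cat{Set}]/X\cong([\H^\op,\cat{Set}]/1)/(X\to1)$, and under this identification $F/X$ is obtained from $F/1$ by a further slicing, so by the pasting lemma for pullbacks $F/X$ is a right adjoint for every $X$ once $F/1$ is; the converse is the case $X=1$. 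This is pure bookkeeping.

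For $(ii)\Leftrightarrow(iii)$ I would invoke the standard equivalence $[\H^\op,\cat{Set}]/F1\simeq[\mathrm{el}(F1)^\op,\cat{Set}]$ together with the universal property of presheaf categories as free cocompletions. If $F/1$ has a left adjoint $G_1\colon[\mathrm{el}(F1)^\op,\cat{Set}]\to[\H^\op,\cat{Set}]$, then $G_1$ is cocontinuous, hence equals the left Kan extension along the Yoneda embedding $Y$ of its own restriction $[\thg]\defeq G_1\circ Y\colon\mathrm{el}(F1)\to[\H^\op,\cat{Set}]$; its right adjoint is then forced to be the nerve $X\mapsto\bigl([\H^\op,\cat{Set}]([\thg]_x,X)\bigr)_{x\in\mathrm{el}(F1)}$, and rewriting this presheaf on $\mathrm{el}(F1)$ as an object of $[\H^\op,\cat{Set}]/F1$ is precisely formula~\eqref{eq:freconstruct}. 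Conversely, given $F1$ and $[\thg]$ validating~\eqref{eq:freconstruct}, that formula identifies $F/1$ with the nerve $N_{[\thg]}$, which has $\Lan_Y[\thg]$ as a left adjoint.

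The equivalence $(iii)\Leftrightarrow(iv)$ is essentially a reading of the formula. Given $(iii)$, \eqref{eq:freconstruct} displays $F(\thg)(h)\cong\sum_{x\in F1(h)}[\H^\op,\cat{Set}]([\thg]_x,\thg)$, a coproduct of representable functors $[\H^\op,\cat{Set}]\to\cat{Set}$. Conversely, if $F(\thg)(h)\cong\coprod_{i\in I_h}[\H^\op,\cat{Set}](A_i,\thg)$, then evaluating at the terminal presheaf $1$ yields $F1(h)\cong\coprod_{i\in I_h}[\H^\op,\cat{Set}](A_i,1)\cong I_h$, so the summands may be indexed by $x\in F1(h)$ with $[\thg]_x\defeq A_x$; naturality of the decomposition in $h$ makes $x\mapsto[\thg]_x$ a functor on $\mathrm{el}(F1)$ and recovers~\eqref{eq:freconstruct}.

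Finally, $(ii)\Rightarrow(v)$: up to the canonical isomorphism $[\H^\op,\cat{Set}]\cong[\H^\op,\cat{Set}]/1$, the endofunctor $F$ factors as $[\H^\op,\cat{Set}]/1\xrightarrow{F/1}[\H^\op,\cat{Set}]/F1\xrightarrow{\mathrm{dom}}[\H^\op,\cat{Set}]$, where $F/1$ preserves all limits as a right adjoint and $\mathrm{dom}$ creates connected limits (a standard property of slices); hence $F$ preserves connected limits. And $(v)\Rightarrow(iv)$: since limits in $[\H^\op,\cat{Set}]$ are pointwise, $F$ preserves connected limits iff each $F(\thg)(h)=\mathrm{ev}_h\circ F$ does, and a functor from a presheaf category to $\cat{Set}$ preserves connected limits if and only if it is a coproduct of representable functors---this is the familial representability theorem of Carboni and Johnstone~\cite{Carboni1995Connected} (going back to Diers; see also~\cite{Leinster2004Higher,Weber2004Generic}). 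I expect this last appeal to be the only non-formal ingredient, and hence the main obstacle were one to want a self-contained account: reproving it requires the familiar analysis of the category of elements of such a functor, showing that each of its connected components has a terminal object, which then represents the corresponding summand via the density presentation of the functor as a colimit of representables. Every other step reduces to routine manipulation of slices, Kan extensions, and the formula~\eqref{eq:freconstruct}.
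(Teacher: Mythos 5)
Your argument is correct and takes essentially the paper's route: the paper likewise disposes of (i)--(iv) via the slicing observation, the identification of a left adjoint to $F/1$ with a functor $[\thg]\colon\mathrm{el}(F1)\to[\H^\op,\cat{Set}]$, and the reconstruction formula~\eqref{eq:freconstruct}, and then settles (iv)$\Leftrightarrow$(v) by citing precisely the Carboni--Johnstone characterisation you invoke (your direct factorisation of $F$ as $\mathrm{dom}\circ(F/1)$ for (ii)$\Rightarrow$(v) is a harmless variant that merely spares the easy direction of that citation). One stray detail in your closing aside: were you to reprove the cited theorem, the connected components of the category of elements of a connected-limit-preserving functor $[\H^\op,\cat{Set}]\to\cat{Set}$ have \emph{initial}, not terminal, objects (for $G=[\H^\op,\cat{Set}](A,\thg)$ the element $(A,\mathrm{id}_A)$ is initial in its component); this does not affect your proof, since you cite the result rather than prove it.
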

[The only part we have not discussed is the equivalence of (iv) and
(v), which follows immediately from the fact that
the functors $[\H^\op, \cat{Set}] \to \cat{Set}$ which preserve
connected limits are precisely the coproducts of representables.]
% The category $\D / F1$ is equivalent to $[\mathrm{el}(F1)^\op,
% \cat{Set}]$; and so to give $F$ is equally to give $F1 \in [\H^\op,
% \cat{Set}]$ together with a left adjoint functor $[\mathrm{el}(F1)^\op,
% \cat{Set}] \to [\H^\op, \cat{Set}]$. Since $[\mathrm{el}(F1)^\op,
% \cat{Set}]$ is the free cocompletion of $\mathrm{el}(F1)$, this is in
% turn equivalent to giving an arbitrary functor $[\thg] \colon
% \mathrm{el}(F1) \to \C$. Thus, all told, to give a local right adjoint
% endofunctor of $[\H^\op, \cat{Set}]$ is equivalently to give a
% presheaf $F1 \in [\H^\op, \cat{Set}]$ together with a functor 

% We now turn from the notion of local right adjoint functor to that of
% cartesian natural transformation.

% The notion of local right adjoint monad may 
% Knowing that a monad is local right adjoint already permits some
% interesting analyses to be performed; it will turn out that all of the
% monads $V_DG_D$ that we consider on $[\H^\op, \cat{Set}]$ are local
% right adjoint. However, for the full power of the theory of local right
% adjoint monads to be available, we must consider a further, stronger condition.

\begin{Defn}
  A natural transformation $\alpha \colon F \Rightarrow G \colon \C
  \to \D$ is called \emph{cartesian} if all its naturality squares are
  pullbacks.  A monad $T$ is called \emph{cartesian} if it preserves
  pullbacks and its unit $1 \Rightarrow T$ and multiplication $ TT
  \Rightarrow T$ are cartesian. A monad is \emph{strongly cartesian}
  if it is cartesian and local right adjoint.
\end{Defn}
Again, if $\C$ has a terminal object, then this definition simplifies:
a natural transformation $\alpha$ as above is cartesian if and only if
each naturality square of the following form is a pullback:
    \begin{equation}\label{eq:naturalitycartesian}
    \cd{FX \ar[r]^{\alpha_X} \ar[d]_{F!} & GX \ar[d]^{G!} \\
      F1 \ar[r]_{\alpha_1} & G1\rlap{ .}}
  \end{equation}

\subsection{Categorical analysis}
We now consider the above notions in the context of the monads $W$,
$P$, $S$ and $T = PS$ for weakening, for weakening and projection, for
substitution, and for \textsc{gat}s. We will see that $W$, $P$ and $S$
are all strongly cartesian, but that $T$, though local right adjoint,
is \emph{not} strongly cartesian.

\begin{Prop}\label{prop:weakeningcart} The weakening monad $W$ is
  strongly cartesian.
\end{Prop}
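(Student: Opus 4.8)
The plan is to establish the two conditions making up strong cartesianness: that $W$ is local right adjoint, and that $W$ is cartesian. Local right adjointness is in fact already contained in Proposition~\ref{prop:wkendo}, which exhibits each functor $WX(n) = \sum_{\phi\in\mathrm{Hp}(n)}[\H^\op,\cat{Set}](\ab\phi,X)$ as a coproduct of representables, and whose description of term-elements does the same for $WX(n_t)$ --- the presheaf representing the $\phi$-th summand being $\ab\phi$ augmented by a single term-element lying over its node~$n$. Thus criterion~(iv) of Proposition~\ref{prop:lra} holds, so $W$ is local right adjoint; one may also read off the data of criterion~(iii), namely $W1(n) = W1(n_t) = \mathrm{Hp}(n)$ with boundary maps $\phi\mapsto\partial\phi$, together with the functor $[\thg]\colon\mathrm{el}(W1)\to[\H^\op,\cat{Set}]$ just described. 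Since a pullback is a connected limit, $W$ preserves pullbacks by Proposition~\ref{prop:lra}(v); it thus remains only to check that $\eta^W$ and $\mu^W$ are cartesian.

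For the unit, by the simplification~\eqref{eq:naturalitycartesian} it is enough to verify that the square with top edge $\eta^W_X\colon X\to WX$ and bottom edge $\eta^W_1\colon 1\to W1$ is a pullback. Since $W{!}\colon WX(n)\to W1(n) = \mathrm{Hp}(n)$ sends $(\phi,h)$ to $\phi$, its fibre over $\gamma_n = \eta^W_1(\ast)$ is the set of pairs $(\gamma_n,h)$ with $h\colon\ab{\gamma_n}\to X$; as $\ab{\gamma_n}$ is isomorphic to the representable $\H(\thg,n)$, this fibre is naturally identified with $X(n)$, and under this identification the comparison map is exactly $\eta^W_X$ by Proposition~\ref{prop:weakunit}. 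The argument on term-elements is identical, using $\Tm{WX}{\gamma_n,\tilde A} = \Tm X A$. Hence $\eta^W$ is cartesian.

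The multiplication is where the real content lies. Again by~\eqref{eq:naturalitycartesian}, it suffices to show that the square with top edge $\mu^W_X\colon W^2X\to WX$ and bottom edge $\mu^W_1\colon W^21\to W1$ is a pullback. Unwinding the definitions, an element of the pullback over degree~$n$ is a datum $\bigl((\psi,\phi),(\phi',h')\bigr)$ in which $\psi\in\mathrm{Hp}(n)$, $\phi = (\phi_i)_{i\in[n]}$ is a $\psi$-compatible family with $\phi_i\in\mathrm{Hp}(\mathsf{dp}_\psi(i))$, $(\phi',h')\in WX(n)$, and $\psi\star\phi = \phi'$; and the comparison map carries $(\psi,(\phi,h))\in W^2X(n)$ to $\bigl((\psi,\phi),(\psi\star\phi,\psi\star h)\bigr)$. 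I would prove this bijective by writing down its inverse explicitly. The guiding observation is that, for $i\in[n]$ with $\psi$-downset $\dn_\psi(i) = \{r_1\prec\dots\prec r_{\mathsf{dp}_\psi(i)} = i\}$, the node $m$ of the heap $\phi_i$ ``is'' $r_m$: from the function form of $\star$ in Definition~\ref{def:weakoper} one reads off $\phi_i(m) = \mathsf{dp}_\psi\bigl((\psi\star\phi)(r_m)\bigr)$, and from the definition of $\psi\star h$ one reads off $h_i(m) = h'(r_m)$, so that $(\phi,h)$ is recovered from $(\psi,\phi',h')$ and the comparison map is injective.

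For surjectivity one takes these formulas as the \emph{definitions} of $\phi_i$ and $h_i$ and checks that they assemble into a genuine element of $W^2X(n)$; the only delicate point is that $h_i(m) \defeq h'(r_m)$ has the required degree $\mathsf{dp}_{\phi_i}(m)$, which rests on the combinatorial lemma that $j\mapsto\mathsf{dp}_\psi(j)$ restricts to a bijection of $\dn_{\psi\star\phi}(j)$ onto $\dn_{\phi_j}(\mathsf{dp}_\psi(j))$ --- so that in particular $\mathsf{dp}_{\psi\star\phi}(j) = \mathsf{dp}_{\phi_j}(\mathsf{dp}_\psi(j))$ --- and this is immediate from the relational description~\eqref{eq:relationstar}. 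The term-element case is then trivial: by Proposition~\ref{prop:weakmult} $\mu^W_X$ acts as the identity on term-elements while both vertical legs land in a singleton, so the claim reduces to the identity $(\psi\star h)(n) = h_n(\mathsf{dp}_\psi(n))$. The main obstacle throughout is thus the bookkeeping needed to justify the explicit inverse of $\mu^W_X$ on a fibre; everything else is formal, following from Propositions~\ref{prop:wkendo}, \ref{prop:lra}, \ref{prop:weakunit} and~\ref{prop:weakmult}.
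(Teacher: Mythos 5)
Your proposal is correct and follows essentially the same route as the paper's proof: local right adjointness via Proposition~\ref{prop:lra}(iv) using the presheaves $[\phi]$ and their term-augmented variants, cartesianness of $\eta^W$ by the Yoneda argument for $\ab{\gamma_n}$, and cartesianness of $\mu^W$ by reconstructing $h$ fibre-wise from $\psi\star h$ along the $\psi$-downsets (your $h_i(m)=h'(r_m)$ is exactly the paper's $h_i(j)=k(v_j)$), with the term-element squares handled by the same singleton/isomorphism observation. The only cosmetic difference is that you phrase the multiplication step as injectivity plus surjectivity with an explicit inverse and isolate the downset-bijection lemma, where the paper argues unique existence directly; the content is the same.
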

\begin{proof}
  We first show $W$ is local right adjoint using the
  characterisation of Proposition~\ref{prop:lra}(iv). For each $n \in
  \H$, we have $W(\thg)(n) = \sum_{\phi \in \mathrm{Hp}(n)}\, [\H^\op,
  \cat{Set}](\ab \phi, \thg)$ a coproduct of representables as
  required. As for $W(\thg)(n_t)$, we define for each $\phi \in
  \mathrm{Hp}(n)$ a presheaf $[\phi]_t \in [\H^\op, \cat{Set}]$ by
  taking $[\phi]$ and adjoining a new term-element over $n \in
  [\phi](\# n)$. Now $(WX)(n_t) = \sum_{(\phi, h) \in PX(n)}\Tm X
  {h(n)} \cong \sum_{\phi \in \mathrm{Hp}(n)}[\H^\op,
  \cat{Set}]([\phi]_t, X)$, whence $W(\thg)(n_t)$ is a coproduct of
  representables as required.
  We next show that $\eta^W \colon 1 \Rightarrow W$ is cartesian, thus
  that each naturality square~\eqref{eq:naturalitycartesian} is a
  pullback.  Evaluating at $n \in \H$, this says that the left
  square below is a pullback; while doing so at $n_t \in \H$
  is the requirement that the right square be a pullback for each $A \in X(n)$.
\[
\cd{X(n) \ar[r]^-{\eta^W_X} \ar[d]_{!} & WX(n) \ar[d]^{W!} \\
1(n) \ar[r]_-{\eta^W_1} & W1(n)} \qquad \qquad
\cd{\Tm X A \ar[r]^-{\eta^W_X} \ar[d]_{!} & \Tm {WX} {\xi_n, \tilde A} \ar[d]^{W!} \\
\Tm 1 {\star} \ar[r]_-{\eta^W_Y} & \Tm {W1} {\xi_n}}\rlap{ .}
\]
To say that the left square is a pullback is to say that each $(\gamma_n,
h) \in WX(n)$ is of the form $(\gamma_n, \tilde A)$ for a unique $A \in
X(n)$. But as we observed before Definition~\ref{prop:weakunit},
$\ab{\gamma_n}$ is the representable functor $\H(\thg, n)$, and so this
follows from the Yoneda lemma. For the right-hand square, we have
that $\Tm{WX}{\gamma_n, \tilde A} = \Tm{X}{\tilde A(n)} = \Tm X A$;
similarly $\Tm{W1}{\gamma_n} = \Tm{1}{\star}$, so that 
both horizonal maps are isomorphisms and the square is a pullback.
Finally, we show that $\mu^W \colon WW \Rightarrow W$ is cartesian,
thus that the squares:
\[
\cd{W^2X(n) \ar[r]^{\mu^W_X} \ar[d]_{W^2!} & WX(n) \ar[d]^{W!} \\
W^21(n) \ar[r]_{\mu^W_Y} & W1(n)} \qquad \qquad
\cd{\Tm {W^2X} {\psi, (\phi, h)} \ar[r]^{\mu^W_X} \ar[d]_{W^2!} & \Tm
  {WX} {\psi \star \phi, \psi \star h} \ar[d]^{W!} \\
\Tm {W^21} {\psi, \phi} \ar[r]_{\mu^W_1} & \Tm {W1} {\psi \star \phi}}
\]
are pullbacks for all $n \in \H$ and all $(\psi, (\phi, h)) \in
W^2X(n)$.  For the left square, we must show
that for each $(\psi, \phi) \in W^21(n)$ and $(\psi \star \phi, k)
\in WX(n)$, there's a unique $(\psi, (\phi, h)) \in W^2X(n)$ with $k =
\psi \star h$. So for each $i \in [n]$, consider the set $\dn_\psi(i) =
\{0 \prec v_1 \prec \dots \prec v_m = i\}$, and now define $h_i \colon
\ab{\phi_i} \to X$ by $h_i(j) = k(v_j)$. By the
definition~\eqref{eq:relationstar} of $\psi \star \phi$, we see that,
if $\phi_i(j) = \ell$, then $(\psi \star \phi)(v_j) = v_\ell$, whence
$h_i(\ell) = k(v_\ell) = k((\psi \star \phi)(v_j)) = \partial (k(v_j))
= \partial(h_i(j))$; thus $h_i$ is a well-defined map. It is moreover
easy to see that $\partial(\phi_i, h_i) = (\phi_j, h_j)$ whenever
$\psi(i) = j$, so that $(\phi, h) \colon \ab \psi \to WX$ is
well-defined; finally, it it straightforward to check that $\psi \star
h = k$, and that $h$ is unique with this property. 
Finally, for the right square, we observe that $\Tm {W^2X}{\psi,
  (\phi, h)} = \Tm {WX}{h_n(\# n)} = \Tm X {(\psi \star h)(n)} = \Tm
{WX}{\psi \star \phi, \psi \star h}$. Thus both horizontal arrows are
isomorphisms and the square is a pullback.
\end{proof}

\begin{Prop}\label{prop:projcart} The weakening and projection monad $P$ is strongly cartesian.
\end{Prop}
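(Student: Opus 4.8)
The plan is to follow closely the template of the proof of Proposition~\ref{prop:weakeningcart}, exploiting that, by Propositions~\ref{prop:projendo}, \ref{prop:projunit} and~\ref{prop:projmult}, the monad $P$ agrees with $W$ on type-elements and on the type-level components of its unit and multiplication, and differs only through the presence of extra projection term-elements; so the type-element halves of every square come for free from Proposition~\ref{prop:weakeningcart}, and only the term-element halves need new work. First I would check that $P$ is local right adjoint using the criterion of Proposition~\ref{prop:lra}(iv). For each $n \in \H$ we have $P(\thg)(n) = W(\thg)(n)$, a coproduct of representables. For $P(\thg)(n_t)$, Proposition~\ref{prop:projendo} gives $\Tm{PX}{\phi, h} = \Tm X{h(n)} + \{\pi_i : i \in [n-1],\ \phi(n) = \phi(i),\ h(n) = h(i)\}$ for $(\phi,h) \in WX(n)$. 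The genuine-term summands are handled exactly as in Proposition~\ref{prop:weakeningcart}, via the presheaves $[\phi]_t$. For the projection summands, for each $\phi \in \mathrm{Hp}(n)$ and each $i \in [n-1]$ with $\phi(n) = \phi(i)$ I would introduce the presheaf $[\phi]_{\pi_i}$ obtained as the coequaliser of $[\phi]$ that identifies the node $i$ with the node $n$; these lie in the same degree $\mathsf{dp}_\phi(n)$ and have equal boundary, so this is a well-defined presheaf, and $[\H^\op, \cat{Set}]([\phi]_{\pi_i}, X)$ is precisely the set of labellings $h \colon \ab\phi \to X$ with $h(n) = h(i)$. Assembling these over all valid $(\phi, i)$ exhibits $P(\thg)(n_t)$ as a coproduct of representables.

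Next I would verify that $\eta^P$ is cartesian. Since $\eta^P = \theta \circ \eta^W$ and $\theta_X$ is the identity on type-elements, the naturality squares at $n \in \H$ literally coincide with those for $\eta^W$, hence are pullbacks. For the squares at $n_t$: since $\gamma_n(n) = n-1 \neq i-1 = \gamma_n(i)$ for every $i \in [n-1]$, we get $\Tm{PX}{\gamma_n, \tilde A} = \Tm X{\tilde A(n)} = \Tm X A$ with no projection summand, so the relevant square again reduces to the one already treated for $W$.

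Finally I would verify that $\mu^P$ is cartesian, which is where the real work lies. The naturality squares at $n \in \H$ again coincide with those of $\mu^W$, so are pullbacks. For the square at $n_t$ over a type-element $(\psi, (\phi, h)) \in P^2X(n) = W^2X(n)$, I would use the explicit decompositions recorded before Proposition~\ref{prop:projmult}: $\Tm{P^2X}{\psi, (\phi,h)}$ is the disjoint union of $\Tm X{h_n(\# n)}$, the ``outer'' projections $\pi_i(\psi, (\phi,h))$, and the ``inner'' projections $\pi_i(\phi_n, h_n)$, and similarly for $X = 1$ after forgetting labels. On the genuine-term summand, $\mu^P_X$ acts as the identity onto $\Tm X{h_n(\# n)} = \Tm X{(\psi \star h)(n)} \subseteq \Tm{PX}{\psi \star \phi, \psi \star h}$, exactly as in the $W$ case, so this sub-square is a pullback. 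The substance is then to check that the two projection summands of $\Tm{P^2X}{\psi, (\phi,h)}$ map bijectively, via the reindexings $\pi_i(\psi, (\phi,h)) \mapsto \pi_i$ and $\pi_i(\phi_n, h_n) \mapsto \pi_{\psi^{\# n - i}(n)}$ of Proposition~\ref{prop:projmult}, onto the fibre of the map $\Tm{PX}{\psi\star\phi, \psi\star h} \to \Tm{P1}{\psi\star\phi}$ over the image of $\Tm{P^21}{\psi,\phi}$. Here one uses that the index conditions defining the projection summands --- the equalities among the $\psi(i)$, among the $\phi_\bullet$ and the $h_\bullet$, together with the depth identity $\mathsf{dp}_\psi(\psi^{\# n - i}(n)) = i$ --- transform correctly under $\star$. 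This last verification is the main obstacle: it is pure bookkeeping but of the unenlightening sort, and as in the $W$ case one may instead dispatch it by induction on $n$, stripping a leaf $m \notin \im\psi$ and invoking preservation of the relevant structure. Combining the three parts shows $\mu^P$ is cartesian, and together with local right adjointness this proves $P$ is strongly cartesian.
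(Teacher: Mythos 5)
You follow the paper's route throughout: reduce everything to the term-element halves of the squares using the agreement of $P$ with $W$ on type-elements, establish local right adjointness via Proposition~\ref{prop:lra}(iv), dispose of $\eta^P$ by observing that $\Tm{PX}{\gamma_n,\tilde A}$ contains no projection terms, and reduce $\mu^P$ to an analysis of the two projection summands. Your packaging of the projection part of $P(\thg)(n_t)$ via the quotient presheaves $[\phi]_{\pi_i}$ is correct and equivalent to the paper's: since $i$ and $n$ already share a boundary in $\ab\phi$, the quotient is isomorphic to $\ab{\partial\phi}$, which is how the paper phrases it (a projection element is determined by the triple $(\partial\phi,\partial h,i)$ with no side conditions).

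The one genuine shortfall is that the crux of the $\mu^P$ step --- the pullback property on the projection summands --- is asserted as ``pure bookkeeping'' rather than carried out, and your fallback (``as in the $W$ case one may dispatch it by induction on $n$, stripping a leaf'') misreads Proposition~\ref{prop:weakeningcart}: the cartesianness verifications there are direct, with the leaf-stripping inductions occurring only in Propositions~\ref{prop:wkendo}--\ref{prop:projmult} where the monad structure is identified. The needed check is short and is the only substantive content of this half of the proof, so it should be done explicitly, as the paper does in two cases. For an outer projection: given $\pi_i(\psi,\phi)\in\Tm{P^21}{\psi,\phi}$ and $\pi_i\in\Tm{PX}{\psi\star\phi,\psi\star h}$, one must exhibit $\pi_i(\psi,(\phi,h))\in\Tm{P^2X}{\psi,(\phi,h)}$, i.e.\ show $h_n=h_i$; this holds because $\psi(n)=\psi(i)$ gives $\partial h_n=\partial h_i$, while the projection condition in $PX$ gives $(\psi\star h)(n)=(\psi\star h)(i)$, i.e.\ $h_n(\# n)=h_i(\# i)$. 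For an inner projection: given $\pi_i(\phi_n)$ and $\pi_j$ with $j=\psi^{\# n-i}(n)$ (so $\# j=i$), one must show $h_n(\# n)=h_n(i)$; this follows from $(\psi\star h)(n)=(\psi\star h)(j)=h_j(\# j)=h_j(i)=h_n(i)$, using that $j\in\dn_\psi(n)$ and hence $h_j=\res{(h_n)}{i}$. Uniqueness is immediate, since all maps in the square preserve the three-fold summand decomposition and the projection summands carry no data beyond their index and the ambient type-element. With these two computations supplied, your argument coincides with the paper's proof.
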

\begin{proof}
  We first show that $P$ is local right adjoint. Certainly $P(\thg)(n)
  = W(\thg)(n)$ is a coproduct of representables; as for
  $P(\thg)(n_t)$, we have that
  \begin{equation*}
    PX(n_t) = WX(n_t) + 
    \textstyle\sum_{(\phi, h) \in PX(n)} \{\pi_i : i \in [n-1], \phi(n) = \phi(i), h(n) = h(i)\}\text{ ,}
  \end{equation*}
  so by Proposition~\ref{prop:weakeningcart}, it suffices to exhibit
  the right summand as a coproduct of representables.  A typical
  element $(\phi, h, \pi_i)$ of this set determines and is determined
  by a triple $(\partial \phi \in \mathrm{Hp}(n-1), \partial h \colon
  \ab{\partial \phi} \to X, i \in [n-1])$ subject to no further
  conditions, so that this summand may be written as $ \sum_{{\phi \in
      \mathrm{Hp}(n-1), i \in [n-1]}} [\H^\op, \cat{Set}]([\phi], X) $
  as required.  It remains to show that $\eta^P$ and $\mu^P$ are
  cartesian. Since $P$ agrees with $W$ on type-elements, the only
  extra work involves term-elements: we must show that squares of the
  form
\[
\cd{\Tm X A \ar[r]^-{\eta^P_X} \ar[d]_{!} & \Tm {PX} {\gamma_n, \tilde A} \ar[d]^{P!} \\
\Tm 1 {\star} \ar[r]_-{\eta^P_Y} & \Tm {P1} {\gamma_n}} \qquad 
\cd{\Tm {P^2X} {\psi, (\phi, h)} \ar[r]^{\mu^P_X} \ar[d]_{P^2!} & \Tm
  {PX} {\psi \star \phi, \psi \star h} \ar[d]^{P!} \\
\Tm {P^21} {\psi, \phi} \ar[r]_{\mu^P_1} & \Tm {P1} {\psi \star \phi}}
\]
are pullbacks for all $A \in X(n)$ and for all $(\psi, (\phi, h)) \in
P^2X(n)$. For the left-hand square, note that $\Tm{PX}{\gamma_n, \tilde
  A}$ contains no projection terms $\pi_i$, as $\gamma_n(n) \neq
\gamma_n(i)$ for any $i \in [n-1]$. Thus $\Tm{PX}{\gamma_n, \tilde A} =
\Tm{WX}{\gamma_n, \tilde A}$ and similarly for $P1$, and so we may appeal
to Proposition~\ref{prop:weakeningcart}. Finally, for the right-hand
square, we need only deal with the new projection terms. We must show two
things:
\begin{itemize}
\item Given projection terms $\pi_i(\psi, \phi) \in P^21(\psi, \phi)$
  and $\pi_i \in PX(\psi \star \phi, \psi \star h)$, we have a valid
  projection term $\pi_i(\psi, (\phi, h)) \in P^2X$; if this
  exists, it will clearly be the unique element sitting over $\pi_i(\psi,
  \phi)$ and $\pi_i$.  Since $\pi_i(\psi, \phi)$ is a projection term,
  we already have that $\psi(n) = \psi(i)$ and $\phi_n = \phi_i$; and
  so we need only show that also $h_n = h_i$. Since $\psi(n) =
  \psi(i)$, we have $\partial h_n = \partial
  h_i$; it remains to show that $h_i$ and $h_n$ agree at $\# n = \# i$. But since
  $\pi_i \in PX(\psi \star \phi, \psi \star h)$, we have $(\psi \star
  h)(n) = (\psi \star h)(i)$, so by definition 
  $h_n(\# n) = h_i(\# i)$ as required.\vskip0.5\baselineskip
\item Given projection terms $\pi_i(\phi_n) \in P^21(\psi, \phi)$ and
  $\pi_{\psi^{\# n - i}(n)} \in PX(\psi \star \phi, \psi \star h)$, we
  have a valid projection term $\pi_i(\phi_n, h_n) \in P^2X$. Since
  $\pi_i(\phi_n)$ is a projection term, we already have that
  $\phi_n(\# n) = \phi_n(i)$, so it remains to show that $h_n(\# n) =
  h_n(i)$. Let us write $j = \psi^{\# n - i}(n)$. Since $\pi_j \in
  PX(\psi \star \phi, \psi \star h)$, we have $(\psi \star h)(n) =
  (\psi \star h)(j)$, whence $h_n(\#n) = (\psi \star h)(n) = (\psi
  \star h)(j) = h_j(\# j) = h_j(i) = h_n(i)$, as required, where for
  the last step, we use the fact that $j \in \dn_\psi(n)$ and so that
  $h_j = \res{(h_n)}{i}$.\qedhere
\end{itemize}
\end{proof}
\begin{Prop}\label{prop:substcart} The substitution monad $S$ is
  strongly cartesian.
\end{Prop}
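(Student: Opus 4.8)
**The plan is to follow exactly the template established by the proofs of Propositions~\ref{prop:weakeningcart} and~\ref{prop:projcart}**, adapting the combinatorics from min-heaps to inc-lists. The argument has three parts: (1) $S$ is local right adjoint; (2) $\eta^S$ is cartesian; (3) $\mu^S$ is cartesian.

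For part (1), I would use the characterisation of Proposition~\ref{prop:lra}(iv): it suffices to exhibit each $S(\thg)(h) \colon [\H^\op,\cat{Set}] \to \cat{Set}$ as a coproduct of representables. For $h = n$, Proposition~\ref{prop:substendo} already gives $SX(n) = \sum_{\alpha \in \mathrm{Inc}(n)} [\H^\op,\cat{Set}](\ab\alpha, X)$, which is visibly such a coproduct. For $h = n_t$, I would mimic the trick used for $W$: for each $\alpha \in \mathrm{Inc}(n)$ define a presheaf $[\alpha]_t$ by adjoining to $[\alpha]$ a new term-element over the type-element $\alpha(n) \in [\alpha](\alpha(n))$, so that $\Tm{X}{h(\alpha(n))}$ is represented by $[\alpha]_t$. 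Then $SX(n_t) = \sum_{(\alpha,h) \in SX(n)} \Tm{X}{h(\alpha(n))} \cong \sum_{\alpha \in \mathrm{Inc}(n)} [\H^\op,\cat{Set}]([\alpha]_t, X)$, a coproduct of representables.

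For part (2), cartesianness of $\eta^S$ reduces (since $\C$ has a terminal object) to checking the pullback squares~\eqref{eq:naturalitycartesian} at each $n$ and each $n_t$. On type-elements, this amounts to showing each $(\iota_n, h) \in SX(n)$ is $(\iota_n, \tilde A)$ for a unique $A \in X(n)$; since $\ab{\iota_n} \cong \H(\thg, n)$ is representable, this is immediate from Yoneda. On term-elements, the horizontal maps $\Tm X A \to \Tm{SX}{\iota_n, \tilde A}$ and $\Tm 1 \star \to \Tm{S1}{\iota_n}$ are both isomorphisms (both equal to $\Tm{X}{\tilde A(n)} = \Tm X A$, resp.\ $\Tm 1 \star$), so the square trivially is a pullback.

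For part (3), cartesianness of $\mu^S$ reduces to showing, for each $(\alpha, \beta) \in S^2 1(n)$ and each $(\beta\alpha, m) \in SX(n)$, that there is a unique $(\alpha, (\beta, h, k)) \in S^2X(n)$ with $(h \cup k) = m$; plus the corresponding (trivial, since both sides are isomorphisms onto $\Tm{X}{\ldots}$) statement on term-elements. The reconstruction of $(h, k)$ from $m$ is the analogue of the min-heap reconstruction: given $m \colon \ab{\beta\alpha} \to X$, we recover $h \colon \ab\beta \to X$ by $h(i) = m(i)$ for $i \in [\beta\alpha(n)]$ (noting $\beta\alpha(n) = \beta(\alpha(n))$, so $h$ is determined on its whole domain by iterated boundaries from the top), and recover $k(j) = m(\beta(j)_t)$ for $j \in [\alpha(n)] \setminus \im\alpha$. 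One checks using $(\beta\alpha)(i) = \beta(\alpha(i))$ that $\partial(\beta_i, h_i) $-type compatibility conditions hold so that $(\beta, h, k)$ is well-defined, and that $h \cup k = m$, with uniqueness clear from the formula. \textbf{The main (mild) obstacle} is bookkeeping: keeping straight that a term-element of $S^2X$ lives over $\Tm{SX}{\beta,h} = \Tm X{h(\beta(\alpha(n)))}$ while one of $SX$ over $(\beta\alpha, h \cup k)$ lives over $\Tm X{(h\cup k)(\beta\alpha(n))}$, and that these two $\Tm{X}{\ldots}$'s coincide because $\beta\alpha(n) = \beta(\alpha(n))$ and $(h \cup k)$ agrees with $h$ on non-substituted indices. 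Once this identification is made, all the term-element squares are pullbacks because both horizontal maps are identity-like isomorphisms, exactly as in the proof for $W$.
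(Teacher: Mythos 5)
Your overall strategy is exactly the paper's: local right adjointness via Proposition~\ref{prop:lra}(iv) using the presheaves $[\alpha]_t$, cartesianness of $\eta^S$ via the Yoneda lemma together with the identification $\Tm{SX}{\iota_n,\tilde A}=\Tm X A$, and cartesianness of $\mu^S$ by reconstructing a unique $(\alpha,(\beta,h,k))\in S^2X(n)$ from $(\alpha,\beta)\in S^21(n)$ and $(\beta\alpha,m)\in SX(n)$, with the term-element squares handled by observing that both horizontal arrows are isomorphisms. Parts (1) and (2) are fine as written.

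In part (3), however, there is one concrete slip: you assert that $h\colon\ab\beta\to X$ ``is determined on its whole domain by iterated boundaries from the top''. That is only true of the type-elements of $\ab\beta$. Since $\beta\in\mathrm{Inc}(\alpha(n))$ need not hit every element of $[\beta(\alpha(n))]$, the presheaf $\ab\beta$ also has term-elements $i_t$ for $i\in[\beta(\alpha(n))]\setminus\im\beta$, and a map $h\colon\ab\beta\to X$ carries the additional data $h(i_t)\in\Tm X{h(i)}$ at those indices. Your reconstruction recovers only the type part of $h$ (from the top value of $m$) and the values $k(j)=m(\beta(j)_t)$ for $j\notin\im\alpha$, so $(\beta,h,k)$ is not yet fully specified and the existence/uniqueness verification for the left-hand pullback square is incomplete as stated. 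The repair is immediate and is exactly what the paper does: the index set $[\beta\alpha(n)]\setminus\im(\beta\alpha)$ splits into indices $i\notin\im\beta$ and indices of the form $i=\beta(j)$ with $j\notin\im\alpha$, so in addition to your two formulas one sets $h(i_t)=m(i_t)$ for $i\in[\beta(\alpha(n))]\setminus\im\beta$; then $h\cup k=m$ holds and uniqueness is clear from the formulas. With that line added, your argument coincides with the paper's proof.
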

\begin{proof}
  We first show that $S$ is local right adjoint. Arguing as in
  Proposition~\ref{prop:weakeningcart}, we have that $S(\thg)(n) =
  \sum_{\alpha \in \mathrm{Inc}(n)}\, [\H^\op, \cat{Set}](\ab \phi,
  \thg)$ is a coproduct of representables, while we may write
  $S(\thg)(n_t)$ as the coproduct $\sum_{\phi \in
    \mathrm{Inc}(n)}[\H^\op, \cat{Set}]([\alpha]_t, X)$, where the
  presheaf $[\alpha]_t \in [\H^\op, \cat{Set}]$ is obtained by
  adjoining to $[\alpha]$ a new term-element over $\alpha(n) \in
  [\phi](\alpha(n))$.  We next show that $\eta^S \colon 1 \Rightarrow
  S$ is cartesian; which, as before, is to show that the squares:
\[
\cd{X(n) \ar[r]^-{\eta^S_X} \ar[d]_{!} & SX(n) \ar[d]^{S!} \\
1(n) \ar[r]_-{\eta^S_1} & S1(n)} \qquad \qquad
\cd{\Tm X A \ar[r]^-{\eta^S_X} \ar[d]_{!} & \Tm {SX} {\iota_n, \tilde A} \ar[d]^{S!} \\
\Tm 1 {\star} \ar[r]_-{\eta^S_Y} & \Tm {S1} {\iota_n}}\rlap{ .}
\]
are pullbacks for each $n \in \H$ and each $A \in X(n)$. The argument
is exactly as in Proposition~\ref{prop:weakeningcart}, using the facts that
$\ab{\iota_n}$ is again the representable $\H(n, \thg)$, and 
that $\Tm{SX}{\iota_n, \tilde A} = \Tm{X}{\tilde A(\iota_n(n))} = \Tm X A$.
Finally, we show that $\mu^S \colon SS \Rightarrow S$ is cartesian,
thus that the squares:
\[
\cd{S^2X(n) \ar[r]^{\mu^S_X} \ar[d]_{S^2!} & SX(n) \ar[d]^{S!} \\
S^21(n) \ar[r]_{\mu^S_Y} & S1(n)} \qquad \qquad
\cd{\Tm {S^2X} {\alpha, (\beta, h, k)} \ar[r]^{\mu^S_X} \ar[d]_{S^2!} & \Tm
  {SX} {\beta\alpha, h \cup k} \ar[d]^{S!} \\
\Tm {S^21} {\alpha, \beta} \ar[r]_{\mu^S_1} & \Tm {S1} {\beta\alpha}}
\]
are pullbacks for all $n \in \H$ and all $(\alpha, (\beta, h, k)) \in
S^2X(n)$.  For the left square, we must show that for each $(\alpha,
\beta) \in S^21(n)$ and $(\beta\alpha, \ell) \in SX(n)$, there's a
unique $(\alpha, (\beta, h, k)) \in S^2X(n)$ with $\ell = h \cup
k$. But this is easy: we define $h(i) = \ell(i)$ for $i \in
[\beta(\alpha(n))]$ and $h(i_t) = \ell(i_t)$ for $i \in
[\beta(\alpha(n))] \setminus \im \beta$, and define $k(j) =
\ell(\beta(j)_t)$ for $j \in [\alpha(n)] \setminus \im \alpha$.  It is
easy to see that this is well-defined, that $h \cup k = \ell$, and
that $h$ and $k$ are unique with this property.  Finally, for the
right square, we observe that $\Tm {S^2X}{\alpha, (\beta, h, k)} = \Tm
{SX}{\beta, h} = \Tm X {\beta(\alpha(n))} = \Tm {SX}{\beta\alpha, h
    \cup k}$. Thus both horizontal arrows are isomorphisms and the
  square is a pullback.
\end{proof}
We conclude by considering the categorical properties of
the monad $T = PS$ for \textsc{gat}s. One might expect that $T$, like
its constituent monads $P$ and $S$, would be strongly
cartesian. However, this turns out not to be the case.
\begin{Prop}\label{prop:gatlra}
  The monad $T$ for \textsc{gat}s is local right adjoint, but not
  strongly cartesian.
\end{Prop}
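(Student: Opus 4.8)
The two assertions have quite different characters, so I would prove them in turn. The local right adjoint claim is formal given what is already known for the pieces: by Propositions~\ref{prop:projcart} and~\ref{prop:substcart} the monads $P$ and $S$ are strongly cartesian, hence local right adjoint, hence---by the equivalence (i)$\Leftrightarrow$(v) of Proposition~\ref{prop:lra}---preserve connected limits; and a composite of connected-limit-preserving functors preserves connected limits. Since the underlying endofunctor of $T$ is $PS$, a further appeal to Proposition~\ref{prop:lra} shows $T$ is local right adjoint. (One could instead read off $T1 = PS1$ and the shape functor $[\thg] \colon \mathrm{el}(T1) \to [\H^\op, \cat{Set}]$ from Propositions~\ref{prop:projendo} and~\ref{prop:substendo} and verify~\eqref{eq:freconstruct} directly, but the connected-limits route is shorter.)

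For the negative claim I would localise the failure of cartesianness to the distributive law $\delta \colon SP \Rightarrow PS$. Since $T$ is local right adjoint it preserves pullbacks, so it is cartesian precisely when its unit and multiplication are. The unit, which at $X$ is $\eta^P_{SX} \circ \eta^S_X$, is cartesian: each of its naturality squares is the paste of a naturality square of $\eta^S$ with one of $\eta^P$, and both are pullbacks by Propositions~\ref{prop:substcart} and~\ref{prop:projcart}. For the multiplication, recall that $\mu^{PS} = \mu^P \mu^S \cdot P\delta$, and that formula~\eqref{eq:5} realises $\delta$ as $\mu^{PS}$ precomposed with a transformation built from the cartesian $\eta^P$ and $\eta^S$ by whiskering along pullback-preserving functors; since $\mu^P$ and $\mu^S$ are cartesian too, whiskering and pasting show that $\mu^{PS}$ is cartesian if and only if $\delta$ is. It is therefore enough to produce one $X$ at which the naturality square of $\delta$ along $X \to 1$ fails to be a pullback.

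The obstruction, I expect, is the interplay of weakening and substitution which underlies the contraction rule: in $SP1$ one may weaken a judgement by a dummy variable and then substitute for it the phantom term $\star_t \in \Tm{1}{\star}$, whereas in $SPX$ a substitution is available only if $X$ genuinely possesses a term of the relevant type---yet under distribution such a phantom substitution is normalised away. I would take $X = \H(\thg, 1)$: the representable presheaf with a single type-element $C$ of degree $1$, no term-elements, and nothing in higher degree. Reading Propositions~\ref{prop:projendo} and~\ref{prop:substendo} at degree $1$ shows that $SPX(1)$ and $PSX(1)$ are singletons---say $PSX(1) = \{\eta\}$---and that $PS!(\eta)$ is the element $\iota_1$ of $PS1(1) \cong \mathrm{Inc}(1)$; whereas $SP1(1)$ contains the element $\xi$ built from $0 < 2 \in \mathrm{Inc}(1)$, the heap $\phi \in \mathrm{Hp}(2)$ with $\phi(2) = 0$, and the term $\star_t$ substituted for the first variable. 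Applying the explicit formula of Proposition~\ref{prop:distrib} one finds $\delta_1(\xi) = \iota_1 = PS!(\eta)$, so $(\xi, \eta)$ lies in the fibre product $SP1(1) \times_{PS1(1)} PSX(1)$; but it is not in the image of $SPX(1)$, whose unique element has first coordinate $\iota_1$ rather than $0 < 2$. Hence that square is not a pullback, $\delta$ is not cartesian, and so $T$ is not strongly cartesian.

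The only real work is the bookkeeping in this last step: carefully identifying $SPX$, $SP1$, $PSX$ and $PS1$ in low degree from the endofunctor descriptions of Sections~\ref{sec:weaken}--\ref{sec:subst}, and confirming through Proposition~\ref{prop:distrib} that $\delta_1$ genuinely erases the phantom substitution. Once the counterexample has been spotted no conceptual difficulty remains.
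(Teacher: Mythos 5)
Your proposal is correct and follows essentially the same route as the paper: local right adjointness of $T=PS$ by composing the local right adjoints $P$ and $S$, cartesianness of the unit by pasting, reduction of the failure for $\mu^{PS}$ to a failure for $\delta$ via~\eqref{eq:5}, and then exactly the paper's counterexample (the presheaf generated by a single degree-$1$ type-element, against the projection-free element of $SP1(1)$ with $\alpha(1)=2$ and the all-zero heap in $\mathrm{Hp}(2)$). The only cosmetic differences are that you verify local right adjointness via preservation of connected limits rather than quoting that a composite of local right adjoints is local right adjoint, and you note the stronger (unneeded) ``if and only if'' in the reduction to $\delta$; both are fine.
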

\begin{proof}
  The underlying endofunctor $T = PS$ is the composite of two local
  right adjoint functors, and so itself is local right
  adjoint. Similarly, the unit $\eta^{PS} = \eta^P S \circ \eta^S
  \colon 1 \Rightarrow PS$ is the composite of two cartesian natural
  transformations and so cartesian. However, the same is \emph{not}
  true of the multiplication. The problem is that the distributive law
  $\delta \colon SP \Rightarrow PS$ is not a cartesian natural
  transformation; it follows that $\mu^{PS}$ is not either. Indeed, if
  it were, then both components of~\eqref{eq:5} would be cartesian
  (the first since $\eta^P$ and $\eta^S$ are and $PS$ preserves
  pullbacks), and hence the composite $\delta$ would be, too. It
  remains to prove that $\delta$ is not cartesian.  Let $X \in
  [\H^\op, \cat{Set}]$ be generated by a single element $A \in X(1)$,
  and consider the square on the left in:
\[
\cd{SPX(1) \ar[r]^{\delta_X} \ar[d]_{SP!} & PSX(1) \ar[d]^{PS!} \\
SP1(1) \ar[r]_{\delta_1} & PS1(1)}
\qquad \qquad
\cd{ & (\gamma_1, (\iota_1, \tilde A)) \ar@{|->}[d]^{} \\
(\alpha, \phi) \ar@{|->}[r]_{} & (\gamma_1, \iota_1)\rlap{ .}}
\]
We will show that this square is not a pullback. Let $\alpha \in
\Inc(1)$ be given by $\alpha(0) = 0$ and $\alpha(1) = 2$, and let
$\phi \in \mathrm{Hp}(\alpha(1)) = \mathrm{Hp}(2)$ be given by
$\phi(2) = \phi(1) = \phi(0) = 0$. These data give rise to a
projection-free term $(\alpha, \phi)$ in $SP1(1)$, and easily
$\alpha^\ast \phi = \gamma_1$ and $\alpha^\phi = \iota_1$. Thus we
have a diagram of elements as on the right above; but there can be no
element of $SPX(1)$ forming a cone over it. For indeed, such an element would
have to be a projection-free element $(\alpha, (\phi, h, k))$, where
$h \colon \ab \phi \to X$ and $k$ comprises an element $k(1) \in \Tm X
{h(1)}$; but since $X$ has no term-elements, this is impossible.
\end{proof}
\begin{Rk}
  It is probably worth explaining what the above failure of
  cartesianness means in proof-theoretic terms. Consider the following
  pattern of derivation. Take a type judgement $\vdash A \ty$. Weaken it
  with respect to a closed type $B$ to obtain $x: B \vdash A\
  \ty$. Now substitute in a closed term $\vdash t : B$. The result is,
  of course, once again just $\vdash A \ty$. The basic data for this
  derivation---the types $A$ and $B$ and the term $t$---correspond to
  an element $(\alpha, (\phi, h, k)) \in SPX(1)$, where $\alpha$ and
  $\phi$ are as in the preceding proof. The result of the
  derivation---the judgement $\vdash A \ty$---is the resultant
  element $\delta_X(\alpha, (\phi, h, k)) = (\gamma_1, (\iota_1,
  \tilde A)) \in PSX(1)$. When $X = 1$, this reduces to
  $\delta_1(\alpha, \phi) = (\gamma_1, \iota_1)$ and in fact $(\alpha,
  \phi)$ is the \emph{unique} preimage of $(\gamma_1, \iota_1)$ under
  $\delta_1$. Thus if $\delta$ were to be cartesian, then each
  $(\gamma_1, (\iota_1, \tilde A))$ would also have to have a unique
  preimage of the form $(\alpha, (\phi, \thg, \thg))$. But this would be to
  say that there were a unique derivation of $\vdash A \ty$ given by
  weakening  and then substituting as above. This is clearly is not
  so: there is one such derivation for each closed type $B$ and each
  term $t$ of that type.

  Thus, finally, the reason for the failure of cartesianness is that,
  in the presence of weakening, application of substitution may
  destroy information like that of $B$ and $t$ in the above example.
  Thus the failure is a failure of \emph{linearity}. What is perhaps
  remarkable is that substitution \emph{is} linear in this sense with
  respect to the theory without weakening; it is only the interaction
  of substitution with weakening that causes the problem.
\end{Rk}

 \bibliographystyle{acm}

\bibliography{bibdata}
 
\end{document}